\documentclass[peerreview]{IEEEtran}

\usepackage[noadjust]{cite}
\usepackage{amsmath,amssymb,amsfonts,amsthm,bbm}
\usepackage{algorithmicx}
\usepackage{textcomp}
\usepackage{xcolor}
\usepackage[ansinew]{inputenc}          
\usepackage{psfrag}                     
\usepackage[dvips]{graphicx,gincltex,epsfig}            
\usepackage[all]{xy}
\usepackage{a4wide}
\usepackage[dvips]{epsfig}
\usepackage{enumerate}
\usepackage{color}
\usepackage{tikz,tikz-3dplot}
\usepackage{tkz-euclide}
\usetikzlibrary{calc,patterns,through}
\usepackage{nccmath}

\usepackage{array}
\makeatletter
\newcommand\HUGE{\@setfontsize\Huge{34}{60}}
\makeatother   
\usepackage{enumitem, hyperref}
\makeatletter
\def\namedlabel#1#2{\begingroup
    #2%
    \def\@currentlabel{#2}%
    \phantomsection\label{#1}\endgroup
}
\makeatother

\usepackage{algorithm,algpseudocode}

\theoremstyle{plain}
\newtheorem{theorem}{Theorem}

\newtheorem{lemma}[theorem]{Lemma}
\newtheorem{corollary}[theorem]{Corollary}

\theoremstyle{definition}

\newtheorem{example}[theorem]{Example}

\newtheorem{remark}[theorem]{Remark}

\newcommand{\Z}{\mathbb{Z}}
\newcommand{\R}{\mathbb{R}}

\newcommand{\F}{\mathbb{Z}}

\newcommand{\Pa}{\mathcal{P}}

\newcommand{\Ordo}{\mathcal{O}}

\newcommand{\zero}{\mathbf{0}}
\newcommand{\bu}{\mathbf{u}}
\newcommand{\bv}{\mathbf{v}}
\newcommand{\bw}{\mathbf{w}}
\newcommand{\bd}{\mathbf{d}}
\newcommand{\bs}{\mathbf{s}}
\newcommand{\bc}{\mathbf{c}}

\newcommand{\supp}{\textrm{supp}}
\newcommand{\vsupp}{\textrm{vsupp}}

\newcommand{\bx}{\mathbf{x}}

\newcommand{\ba}{\mathbf{a}}
\newcommand{\bb}{\mathbf{b}}

\newcommand{\by}{\mathbf{y}}
\newcommand{\bp}{\mathbf{p}}

\newcommand{\dmin}{d_{\textrm{min}}}

\newcommand{\cB}{\mathcal{B}}

\newcommand{\cS}{\mathcal{S}}

\newcommand{\ind}{\textrm{ind}}

\title{The Size of the Intersection of $q$-ary Hamming Balls
	\thanks{The authors were funded in part by the Research Council of Finland grants 338797 and 358718. Some of the results in this paper can be found
    in the conference articles of  the 2025 IEEE Information Theory Workshop (ITW2025) \cite{pavanITWSubstitution} and
    of the 2026 IEEE International Symposium on Information Theory (ISIT2026) \cite{pavanISITsubstitution}.}
}

\author{%
  \IEEEauthorblockN{Ville Junnila, Tero Laihonen, Tuomo Lehtil{\"a} and Pavan Padavu Devaraj\\}
  \IEEEauthorblockA{Department of Mathematics and Statistics \\
                    University of Turku, FI-20014, Turku, Finland\\
                    Email: \{viljun, terolai, tualeh, ppadev\}@utu.fi}
}
\begin{document}

\maketitle

\begin{abstract}
	   The interest in studying the size of the intersection of multiple $q$-ary Hamming balls has grown due to the recent advances in DNA-based data storage systems. We present an exact formula for the cardinality of the intersection of $s$ Hamming balls of varying radii 
       over a $q$-ary alphabet.
       It is known that the distances between the center points of the Hamming balls are not enough, in general, to determine the size of the intersection. Based on our formula, we are able to find more refined structural properties of the center points for determining the exact size of the intersection.  
       Moreover, we also analyze the size of the intersection for sufficiently large $n$.  When $s=3$, we give the necessary and sufficient  conditions (for all $q\ge 2$, $q\neq 6$ and sufficiently large $n$) to obtain the maximum size of the intersection when the center points of the Hamming balls have a given minimum distance and demonstrate how to compute it using our general formula. 
\end{abstract}

\noindent\textbf{Keywords:} Substitution errors, Hamming balls, Intersection of balls, Levenshtein sequence reconstruction, DNA-memory, Information Retrieval
	
\section{Introduction}

In this paper, we study the intersections of multiple $q$-ary Hamming balls. This problem is important in the Levenshtein sequence reconstruction problem~\cite{Levenshtein}, which is relevant for advanced memory systems such as DNA-storage and associative memories~\cite{yaakobi2018uncertainty}. The problem of intersections of multiple $q$-ary Hamming balls is also related to estimating the number of error-correcting codes~\cite{dong2023number}. 
Although a thorough understanding of such intersections seems rather fundamental concerning the structure of Hamming spaces, rather little is known about them. Most research on the topic has concentrated only on the binary spaces and the cases with at most three balls. In this work, we give a general treatment of the problem that covers cases involving $q$-ary alphabets and multiple balls. We also allow differing radii for the balls.

In what follows, we first agree on some definitions and notation used in the paper. Denote the ring of $q$ elements by $\Z_q=\{0,1,\dots, q-1\}$ and the $n$-fold Cartesian product of $\Z_q$ by $\Z_q^n$. For convenience, we represent the set $\{a, a + 1,\dots, b\}$ by $[a, b]$ for integers $a \le b$. The cardinality of a  set $A$ is denoted by $|A|$. The \emph{support} and \emph{value support} of a word $\bx$ are defined as $\supp(\bx) = \{i \mid x_i \neq 0\}$ and $\vsupp(\bx) = \{(i,x_i) \mid x_i \neq 0\}$, respectively. The all-zero word $00\dots0 \in \Z_q^n$ is denoted by $\zero$. For a word $\bx \in \Z^n_q$, the \emph{Hamming weight} $w(\bx)$ is defined by $w(\bx) = |\supp(\bx)|$. Moreover, the \emph{Hamming distance} $d(\bx,\by)$ between two words $\bx,\by \in \Z_q^n$ is defined as the number of coordinate positions in which $\bx$ and $\by$ differ; in other words, $d(\bx,\by) = w(\bx - \by)$ or equivalently, \begin{equation}\label{eqDist}
d(\bx,\by) =w(\bx)+w(\by)-|\supp(\bx)\cap \supp(\by)|-|\vsupp(\bx)\cap \vsupp(\by)|.
\end{equation} A \emph{Hamming ball} (resp. \emph{sphere}) of radius $t$ centered at $\bx \in \Z_q^n$ is defined as the set $B_t(\bx) = \{\by \in \Z_q^n \mid d(\bx,\by) \le t\}$ (resp. the set $S_t(\bx) = \{\by \in \Z_q^n \mid d(\bx,\by) = t\}$). For the cardinality of the ball, we use the notation $|B_t(\bx)| = V_q(n,t)$ and clearly, $V_q(n,t)=\sum_{k=0}^{t}\binom{n}{k}(q-1)^k$. A nonempty subset $C$ of $\Z_q^n$ is called a \emph{code} and its elements are called \emph{codewords} (usually denoted by $\bc^i$, for $i\in [1,|C|]$). The \emph{minimum distance} $\dmin(C)$ of a code $C$ is defined as $\dmin(C) = \min_{\bc^1, \bc^2 \in C, \bc^1 \neq \bc^2}d(\bc^1, \bc^2)$. A code $C$ is called \emph{$e$-error-correcting} if $\dmin(C) \ge 2e + 1$. 

Let $S = \{\bc^1, \bc^2, \ldots, \bc^s\}$ be a subset of $\Z_q^n$ and $T = (t_1, t_2, \ldots, t_s)$ be a list of nonnegative integers. For the rest of the paper, we make the natural assumption that $t_m \leq n$ for each $m \in [1,s]$. The intersection of the Hamming balls $B_{t_m}(\bc^m) \ (m \in [1,s])$ is denoted by $I_T(S)$, that is,
\[
I_T(S) = \bigcap_{m=1}^s B_{t_m}(\bc^m) \text.
\]
Moreover, we agree that if each radius is equal to $t$ (that is, $t_m=t$ for all  $m \in \{1, \ldots, s\}$), then the notation is simplified as $I_T(S) = I_t(S)$ (see Figure~\ref{fig:BallInter} for an illustration of $I_T(S)$).

In this article, we study $|I_T(S)|$.
In~\cite{covcode} (and \cite{Iiro}), it has been shown that if $q = 2$ and we have two or three Hamming balls, that is, $|S| \in \{2,3\}$, then the cardinality of $I_T(S)$ is determined solely by the distances between the words of $S$. Moreover, it has been shown in \cite{Iiro} (via counterexamples) that this is not the case if $q=2$ and $|S| \geq 4$ or $q \geq 3$ and $|S| \geq 3$. For illustrative purposes, we give another counterexample for the case $q \geq 3$ and $|S|  = 3$.
\begin{example}\label{exSubInt}
	Assume that $q \geq 3$. Let $S=\{\bc^1, \bc^2, \bc^3\}\subset \Z_q^4$ where $\bc^1=0000,\bc^2=1100$ and $\bc^3=0110$. The pairwise distance between each of these words is $2$. The words in $I_2(S)$ can be presented as $0j00,i100,01i0,010i,$ and $1010$ where $j\in[0,q-1]$ and $i\in[1,q-1]$ for $q\geq3$. Hence, we have $|I_2(S)|=4q-2$. Consider next $S'=\{\bw^1,\bw^2,\bw^3\}\subset \Z_q^4$ for $\bw^1=0000,\bw^2=1100$ and $\bw^3=2200$.  The pairwise distance between each of these words is also $2$. The words in $I_2(S')$ can be presented as $jj'00$ for $j,j'\in[0,q-1]$.  Hence, we have $ |I_2(S')|=q^2$. Since $|I_2(S)|\neq |I_2(S')|$ for all integers $q\geq 3$, we can see that the size of intersections is \emph{not} determined by the distances only.  
\end{example}

\begin{figure}\centering
\scalebox{0.9}{
\begin{tikzpicture}[scale=0.84][thick]
\draw (0,0.5) circle (2.5cm);
    \coordinate[label = above left:] (X) at (-2.5,0.5);
    \coordinate[label = above left:] (O) at (0,0.5);

\draw (2.5,-1.2) circle (2.5cm);

\draw (-0.4,-2.5) circle (2cm);

    \coordinate[label = above left:$\mathbf{c}^1$] (A) at (0,0.5);
    \coordinate[label = above right:$\mathbf{c}^2$] (D) at (2.5,-1.2);
    \coordinate[label = below right:$\mathbf{c}^3$] (E) at (-0.4,-2.5);
    \draw (A) -- (D) -- (E) -- (A);
    \draw (X) -- (O);

\coordinate[label = above right:${\scriptstyle d_1}$] (d1) at (1,-0.42);
\coordinate[label = above right:${\scriptstyle d_2}$] (d2) at (-0.8,-1.5);
\coordinate[label = above right:${\scriptstyle d_3}$] (d3) at (0.8,-2.35);
\coordinate[label = above right:${\scriptstyle t_1}$] (d3) at (-1.5,0.45);

        \node at (A)[circle,fill,inner sep=1pt]{};
        \node at (D)[circle,fill,inner sep=1pt]{};
        \node at (E)[circle,fill,inner sep=1pt]{};

\draw [clip](0,0.5) circle (2.5cm);
\draw [clip] (-0.4,-2.5) circle (2cm);
\fill[lightgray] (2.5,-1.2) circle (2.5cm);
\fill[pattern=north east lines, draw, pattern color=darkgray!70!lightgray] (2.5,-1.2) circle (2.5cm);
\coordinate[label = above right:$I_T^d(S)$] (It) at (0.8,1.35);
    \node at (0.6,-1.3)  {$I_T(S)$
};

    \draw (A) -- (D) -- (E) -- (A);
    \draw (X) -- (O);

\end{tikzpicture}}
    \caption{The intersection $I_T(S)$ of three balls centered at words $\bc^1$, $\bc^2$ and $\bc^3$ with pairwise distances $d_1$, $d_2$ and $d_3$, where $T = (t_1, t_2, t_3)$.} 
    \label{fig:BallInter}
\end{figure}

Notice that in the example we have $|\supp(\bc^2) \cap \supp(\bc^3)| = 1 \neq 2 = |\supp(\bw^2) \cap \supp(\bw^3)|$ (although the distances between the centers of the balls are the same in both cases). In Section~\ref{subsecNumPara}, it is shown that this difference is crucial to the size of the intersection and that the exact size can actually be determined based on the distances of the centers and $|\supp(\bc^2) \cap \supp(\bc^3)|$. In the section, the required additional parameters (besides the distances) to decide the size of the intersection are described in general for any number of centers.


\subsection{Related Work} \label{SecRelatedWork}
Recall (from~\cite{covcode}) that if $q = 2$ and $|S| \in \{2,3\}$, then the cardinality of $I_T(S)$ is based solely on the distances between the words of $S$. However, in~\cite{covcode}, the exact values of the cardinalities were left open. Later, in~\cite[Corollary~1]{Levenshtein}, the exact cardinality of the intersection $I_t(S)$ was determined when 
$|S| = 2$ and $q \geq 2$. Moreover, some progress on the size of $I_t(S)$ has been made concerning the maximum cardinality $N_t^q(s,d)$ of $I_t(S)$ with respect to $|S|$ and $\dmin(S)$ which is defined as
\[
N_t^q(s,d) = \max\left\{ |I_t(S)| \mid S \subseteq \Z_q^n, |S| = s, \dmin(S) \geq d \right\}.
\]
This can be seen through the following existing results:
\begin{itemize}
	\item In~\cite[Theorem~6]{yaakobi2018uncertainty}, $N_t^q(s,d)$ has been determined for $s = 3$ and $q = 2$ (when $n$ is large enough).
    \item According to~\cite[Theorem 33]{junnila2020levenshtein}, if $q=2$, and $e$, $\ell$  and $a\in[0,\ell-1]$ are constants as well as $V_2(n,\ell-a-1)+1\leq N_{e+\ell}^2(s,2e+1)\leq V_2(n,\ell-a)$, then $s\in\Theta(n^a)$.
	\item In~\cite[Theorem~12]{yaakobi2018uncertainty}, it has been shown that for $q=2$ and constants $t$, $s$ and $d$, we have $N_t^2(s,d)\in \Theta(n^{t-\lceil d/2\rceil})$.
	\item If we have sufficiently large $n$ and constants $e$, $\ell$, $s$ and $q=2$,  then we have $N_{e+\ell}^2(\ell+1,2e+1)= V_2(n,\ell-1)+1$ and $N_{e+\ell}^2(s,2e+1)=V_2(n,\ell-1)$ for each $s\geq\ell+2$ (see  \cite{junnila2020levenshtein}). Furthermore, similar results in the case with $q \geq 3$ have been discussed in~\cite{junnila2025alphabet}. 
    \item If $n\geq \ell$, $q=2$ and $N^2_{e+\ell}(s,2e+1)\geq V_2(n,\ell-1)+1$, then $s\leq 2^\ell$ (see  \cite[Theorem~7]{junnila2020levenshtein}).
	\item In~\cite[Theorems~11 and 12]{junnila2024levenshtein}, given sufficiently large $n$, an exact formula for $N_{t}^2(s,d)$ has been given in the binary case for some constants $s$, $e$ and $\ell$ 
    where $t=e+\ell$, $d=2e+1$ and $s\leq \ell+1$.
\end{itemize}

However, it should be noted that in the results 
mentioned above regarding $N_{t}^q(s,d)$, the problem of intersecting balls is considered from a somewhat different angle. Namely, their formulation is based on the Levenshtein sequence reconstruction problem, for which the value $N_t^q(s,d)$ plays an important role. 
In what follows, we briefly discuss the Levenshtein sequence reconstruction problem and its application to DNA-storage.

Let $S \subseteq \Z_q^n$ be a code. In the Levenshtein sequence reconstruction model, a word $\bx\in S$ is transmitted through $N$ channels and, due to the errors  introduced in each channel, a set of output words $Y = \{\by_1, \by_2, \dots, \by_N\}$ is obtained. The output of each channel is assumed to be distinct and the number of  errors occurring in a channel is at most $t$. Let $T(Y) \subseteq S$ denote the set of codewords that may produce the output set $Y$ when sent through the $N$ channels. In other words, $T(Y)$ can be considered as a list of possible transmitted codewords which the decoder can deduce based on the output words $Y$. The maximum value $N_t^q(s,d)$ is essential in the reconstruction model since if $|Y| \geq  N_t^q(s,d) + 1$, then the length of the list $T(Y)$ is bounded from above by $s-1$. On the other hand, if $|Y| \leq  N_t^q(s,d)$, then it is possible that $|T(Y)| \geq s$.

The original motivation for the Levenshtein sequence reconstruction model came from molecular biology and chemistry~\cite{Levenshtein}. 
Recently, the topic has regained popularity due to its connections to advanced memory storage technologies such as associative memories \cite{yaakobi2018uncertainty}, racetrack memories \cite{chee2018reconstruction} and, especially, polymer-based memories such as ones based on DNA \cite{Uusi_Maria_Abu-Sini}. 
The relationship between DNA-based memory storage systems and the Levenshtein reconstruction model can be stated as follows: In such memories, the data is stored directly in the DNA by synthesizing suitable strands. This leads to multiple redundant copies of each strand which are stored into a pool of freely floating DNA. When the information is read, we pick strands from this pool (possibly obtaining multiple erroneous copies of the strand in which the original information was stored). These strands are multiplied in the reading process (called sequencing). Finally, we end up with  many copies of a strand, each with its own errors. We may apply the Levenshtein reconstruction model to this problem by considering each strand as an output word in $Y$ based on which we can determine the originally stored information $\bx$ (see \cite{heckel2019characterization, Abu-Sini2024}).
The larger alphabets (with $q \geq 3$) studied in this paper are relevant in the polymer-based technologies~\cite{laurent2020high,MR3376124}; in particular, the four nucleotide bases of the DNA emphasize the importance of the case with $q = 4$ symbols.

\smallskip

There is also a connection between $N_t^q(s,d)$ (or $I_t(C)$) and the error-correcting codes. If $C$ is an $e$-error-correcting code with $d=2e+1$, then $N_t^q(s,d)$ gives the maximum intersection of $s$ distinct $t$-radius balls centered at the codewords~\cite{yaakobi2018uncertainty}. Furthermore, the value is also related to list-error-correcting codes~\cite{Guruswamin_kirja}:
If $N_t^q(s,d)=1$ and $\dmin(C)\geq d$ for $C\subseteq\Z^n_q$, then $|B_t(\bw)\cap C|\leq s$ for any $\bw\in C$ (see Remark~\ref{RemList} for more discussion on this topic).

\smallskip
Beyond the study of intersections of multiple Hamming balls, several works have focused specifically on the intersection of two balls, motivated by a range of applications.
In~\cite[Theorem 14.2.2]{blahut1983theory}, Blahut considers the intersection number $N(l,h;d)$ of the Hamming scheme and also gives an exact formula for it. Using the notation of our paper, the intersection number can be defined as $N(l,h;d) = |S_{l}(\bc^1) \cap S_{h}(\bc^2)|$, where $\bc^1$ and $\bc^2$ are words of $\Z_q^n$ such that $d(\bc^1, \bc^2) = d$. 
Therefore, the cardinality of $B_{t_1}(\bc^1) \cap B_{t_2}(\bc^2)$, where $d(\bc^1, \bc^2) = d$, can be expressed using the intersection numbers as $|B_{t_1}(\bc^1) \cap B_{t_2}(\bc^2)| = \sum_{l=0}^{t_1}\sum_{h=0}^{t_2} N(l,h;d)$.
The motivation for the study of intersection numbers was their connections to the probabilities of decoding errors and failures. Later in~\cite{barg2000strengthening}, these values have been used to improve the Gilbert-Varshamov bound. Moreover, \cite{dong2023number, balogh2016applications, kim2022exponential} have given approximations for the value $|I_t(\{\bx,\by\})|$. The works~\cite{dong2023number, balogh2016applications} were motivated by estimating the number of $e$-error correcting codes while~\cite{kim2022exponential} cites multiple motivations such as improvements of the Gilbert-Varshamov bound and list-error-correcting codes. Furthermore, the quantity $|I_t(\{\bx,\by\})|$ has come up also in the case of analyzing immunological data in~\cite{smith1997deriving}. 
%
%
%
In~\cite{alon2024helly}, the intersection of Hamming balls has been considered from yet another perspective, namely through the lens of Helly numbers (originally considered in~\cite{helly1923mengen}). 

\subsection{Structure of the Paper}
 In this paper, we focus on the cardinality of the intersection $I_T(S)$. First, in Section~\ref{secGeneralFormula}, we give a general formula for computing the size of the intersection $I_T(S)$ and analyze this formula. We continue in Subsection~\ref{subsecNumPara} by studying the properties of the center points which determine the size of the intersection. As we will see, the distance between the center points is not enough to determine the size of $I_T(S)$, when $|S|\ge 4$ and $q=2$ or $|S|\ge 3$ and $q\ge 3$. Then in Section~\ref{SecIntersection}, we focus on the asymptotics of the case $I_t(S)$, that is, the case where each ball has the same radius. We begin by giving some general lemmas and continue in Subsection~\ref{sec3balls} by studying the case of the three balls for $q\ge 3$. We give the triples obtaining the maximum size of the intersection together with the formula for the intersection size $N_q^t(3,d)$. Finally, we conclude in Section~\ref{secConclusion}.

\section{The Exact Cardinality of the Intersection}\label{SecFormula} 

In the first part of this section, we present an exact formula for the size of the intersection of $s$ substitution balls in the $q$-ary Hamming space $\Z^n_q$, where the radius $t_m$ ($m \in [1,s]$) of each ball can be chosen individually (see Theorem~\ref{Thm_intersection_size}). As a by-product, for $|S| = 3$ and $q = 2$, we extend a previously known result for equal radii to the case with varying radii; see  
Corollary~\ref{Cor3balls} and Remark~\ref{Rem3balls}.
As discussed in Example~\ref{exSubInt}, the pairwise distances are usually not enough to determine the size of the intersection. In the second part of this section, we consider based on the obtained exact formula, the additional information (besides the pairwise distances) we need about the words in $S$ to calculate $|I_T(S)|$. Finally, we conclude the section with Remark~\ref{RmkComplexity} by showing that the exact formula of Theorem~\ref{Thm_intersection_size} can be efficiently computed when the number $s$ of the balls is constant (as is natural to assume for DNA-storages).

\subsection{The General Formula for the Cardinality of the Intersection}\label{secGeneralFormula}

Let $s,q \geq 2$ and $S = \{\bc^1, \bc^2, \dots, \bc^s\} \subseteq \Z_q^n$ consist of the $s$ distinct centers of the balls. In order to determine the size of the intersection, we consider for each coordinate position $h \in [1,n]$ the $s$-tuple $(c^1_h, c^2_h, \dots, c^s_h)$ formed by the $h$th coordinates of the words of $S$. Then we form a \emph{partition} $P_h$ of $[1,s]$ based on the tuple as follows: $j$ and $j'$ of $[1,s]$ belong to the same \emph{block} (or \emph{equivalence class}) of $P_h$ if and only if $c^j_h = c^{j'}_h$, i.e., the blocks are composed of the indices $m$ of the words $\bc^m$ with equal values at the $h$th coordinate. 
Then we group together the coordinate positions $h$ for which the corresponding partitions $P_h$ are equal and show in Theorem~\ref{Thm_intersection_size} that the numbers of coordinate positions in such groups are essential in determining the size of the intersection. Let us now discuss this general idea in more detail.


The partitions $P_h$ may be regarded as divisions of $[1,s]$ into nonempty subsets. Such partitions have been extensively studied within the field of combinatorics~\cite{DezaSN}; in particular, they are closely related to the Bell numbers and the Stirling numbers of the second kind. The \emph{Stirling number of the second kind}  gives the number of ways to partition $[1,s]$ into $k$ nonempty subsets and is denoted by $\cS(s,k)$. In~\cite[Section~2.2~(p.~161)]{DezaSN}, a closed formula for $\cS(s,k)$ is given:
\[
\cS(s,k) = \sum_{i=1}^k \frac{(-1)^{k-i}i^s}{(k-i)!i!} \text.
\]
Furthermore, the \emph{Bell number} $\cB_s$ counts all the possible partitions of $[1,s]$ and it can be expressed using $\cS(s,k)$ as follows \cite[Section~2.2~(p.~84)]{DezaSN}:
\[
\cB_s = \sum_{k=1}^s \cS(s,k) \text.
\]
Recall that we are interested in the partitions $P_h$ of $[1,s]$ based on the $s$-tuple $(c^1_h, c^2_h, \dots, c^s_h)$ such that $j$ and $j'$ of $[1,s]$ belong to the same block if and only if $c^j_h = c^{j'}_h$. Observe that in addition to $s$, the number of blocks in such a partition is upper bounded by $q$ (since there are at most $q$ choices for each $c^j_i$). Thus, the maximum number $P_q(s)$ of such partitions $P_h$ can be given as follows: 
\begin{equation} \label{Eq_Pqs}
	P_{q}(s) = \sum_{k=1}^{\min\{q,s\}} \cS(s,k) = \sum_{k=1}^{q} \cS(s,k)\text.
\end{equation}
The latter equality follows due to the fact that $\cS(s,k) = 0$ if $k > s$; see, for example, \cite[Remark 3.1]{bagui2024stirling}. Moreover, if $q \geq s$, then $P_{q}(s)$ is equal to the Bell number $\cB_s$. For future purposes, we use the following notation for the $P_{q}(s)$ distinct partitions of $[1,s]$: $P_{k,i}$ denotes a partition of $[1,s]$ into $k$ blocks, where $k \in \{1, 2, \dots, \min\{q,s\}\}$ and $i \in \{1, 2, \dots, \cS(s,k)\}$. Observe that each partition $P_h$ is equal to some $P_{k,i}$; however, there might be partitions $P_{k,i}$ which are not equal to any $P_h$. Hence, the number of distinct partitions $P_h$ is at most $P_{q}(s)$.

With respect to the discussion of the first paragraph of the section, we group together the coordinate positions $h \in [1,n]$ which correspond to the same partition $P_{k,i}$ and denote the group of such coordinate positions by $\Pa_{k,i} \subseteq [1,n]$. Moreover, if $P_{k,i}$ is not equal to any $P_h$, then we agree that $\Pa_{k,i} = \emptyset$. Observe that the subsets $\Pa_{k,i}$ form a sort of a partition of $[1,n]$, where some $\Pa_{k,i}$ are allowed to be empty; in what follows, such a partition is called a \emph{pseudo-partition}\footnote{Notice that the term 'pseudo-partition' also has some other meanings in the literature; for example, see~\cite{BonacinaGalesi2013}.}. Moreover, the number of coordinate positions in $\Pa_{k,i}$ is denoted by $p_{k,i}$, i.e., $|\Pa_{k,i}| = p_{k,i}$. Recall that for some $k$ and $i$, the set $\Pa_{k,i}$ of coordinate positions might be empty, i.e., $p_{k,i} = 0$, and observe that the values $p_{k,i}$ sum up to $n$, when $k \in \{1, 2, \dots, \min\{q,s\}\}$ and $i \in \{1, 2, \dots, \cS(s,k)\}$ go through all the possible choices. Furthermore, assume that the order of the $k$ blocks of each $P_{k,i}$ is fixed as follows (although subsets of a partition are usually assumed to be unordered): the blocks are listed in an ascending order based on their smallest element. Let $\ind: S \to \{1,2,\dots,s\}$ be a function such that $\ind(\bc) = j$ if $\bc = \bc^j$, that is, the function $\ind$ outputs the index of $\bc$ among set $S$; the function $\ind$ is well-defined since the words of $S$ are distinct. Now we may define a function $f_{k,i}: S \to \{1, \dots, k\}$ such that $f_{k,i}(\bc) = \ell$ if the index $\ind(\bc) = j$ belongs to the $\ell$th subset of $P_{k,i}$; indeed, $f_{k,i}$ is well-defined since the order of the blocks of $P_{k,i}$ is fixed. The introduced notations are illustrated in the following example.
\begin{example} \label{Ex_notation}
	Let $q \geq 3$ and consider an intersection of three balls. By~\eqref{Eq_Pqs}, we now have $P_q(3) = \cS(3,1) + \cS(3,2) + \cS(3,3) = 1 + 3 + 1 = 5$; in particular, the subsets $\Pa_{1,1}$, $\Pa_{2,1}$, $\Pa_{2,2}$, $\Pa_{2,3}$ and $\Pa_{3,1}$ form a pseudo-partition of $[1,n]$ with the corresponding partitions of $[1,s] = [1,3]$ being $P_{1,1} = (\{1,2,3\})$, $P_{2,1} = (\{1,2\}, \{3\})$, $P_{2,2} =  (\{1,3\}, \{2\})$, $P_{2,3} = (\{1\}, \{2,3\})$ and $P_{3,1} = (\{1\},\{2\},\{3\})$ (where the list notation emphasizes that the blocks are listed in the fixed order declared above). Let $S = \{\bc^1, \bc^2, \bc^3\}$ be a set, where $\bc^1 = 0000$, $\bc^2 = 1100$ and $\bc^3 = 0110$ (see Example~\ref{exSubInt}). Now we have $P_1 = \{\{1,3\},\{2\}\}$, $P_2 = \{\{1\},\{2,3\}\}$, $P_3 = \{\{1,2\},\{3\}\}$ and $P_4 = \{\{1,2,3\}\}$. Therefore, we obtain $\Pa_{1,1} = \{4\}$, $\Pa_{2,1} = \{3\}$, $\Pa_{2,2} = \{1\}$, $\Pa_{2,3} = \{2\}$ and $\Pa_{3,1} = \emptyset$, which imply $p_{1,1} = p_{2,1} = p_{2,2} = p_{2,3} = 1$ and $p_{3,1} = 0$. Furthermore, illustrating the functions $f_{k,i}(\bc)$, we have, for example, $f_{1,1}(1100) = 1$ and $f_{2,1}(0110) = 2$ since $\ind(1100) = 2$ and $\ind(0110) = 3$ as well as $2$ belongs to the first block of $P_{1,1}$ and $3$ to the second block of $P_{2,1}$.
\end{example}

In the following theorem, we give an exact formula for the number of words in the intersection of balls of radii $t_1, t_2, \ldots, t_s$ respectively centered at $\bc^1, \bc^2, \dots, \bc^s$ based on the values $p_{k,i}$.
\begin{theorem} \label{Thm_intersection_size}
	Let $S = \{\bc^1, \bc^2, \dots, \bc^s\}$ be a subset of $\Z^n_q$ and $T = (t_1, t_2, \ldots, t_s)$ a list of nonnegative integers, where $s\geq2$. Let $j_{k,i}^l$ be nonnegative integers, where $k \in \{1, 2, \dots, \min\{q,s\}\}$, $i \in \{1, 2, \dots, \cS(s,k)\}$ and $l \in \{1, 2, \dots, \min\{q,k+1\}\}$, satisfying the following conditions: (i) $\sum_{l=1}^{\min\{q,k+1\}} j_{k,i}^l = p_{k,i}$ and (ii) for each $\bc^m \in S$,
	\[
	\sum_{k=1}^{\min\{q,s\}} \sum_{i=1}^{\cS(s,k)} \left( p_{k,i} - j_{k,i}^{f_{k,i}(\bc^m)} \right) \leq t_m \text.
	\]
    Let $\mathcal{J}$ denote all the tuples formed by $j_{k,i}^l$ which satisfy the conditions~(i)~and~(ii). The formula for the size of the intersection divides into the following two cases depending on whether  (a) $q > s$ or (b) $q \leq s$:
	\begin{itemize}

		\item[(a)] If $q > s$, then
		\begin{equation} \label{Eq2ndFormula}
		    |I_T(S)| = \sum_{\mathcal{J}} \left( \prod_{k=1}^{s} \prod_{i=1}^{\cS(s,k)} \binom{p_{k,i}}{j_{k,i}^1, j_{k,i}^2, \dots, j_{k,i}^{k+1}}(q-k)^{j_{k,i}^{k+1}} \right)  \text,
		\end{equation}
		where the sum goes through all the tuples of $\mathcal{J}$.

        \item[(b)] If $q \leq s$, then
		\begin{equation} \label{Eq1stFormula}
		    |I_T(S)| = \sum_{\mathcal{J}} \left( \prod_{k=1}^{q-1} \left(  \prod_{i=1}^{\cS(s,k)} \binom{p_{k,i}}{j_{k,i}^1, j_{k,i}^2, \dots, j_{k,i}^{k+1}}(q-k)^{j_{k,i}^{k+1}} \right)  \cdot \prod_{i=1}^{\cS(s,q)} \binom{p_{q,i}}{j_{q,i}^1, j_{q,i}^2, \dots, j_{q,i}^{q}} \right)  \text,
		\end{equation}
	    where the sum goes through all the tuples of $\mathcal{J}$.
	\end{itemize}
\end{theorem}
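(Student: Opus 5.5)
We will prove the formula by a direct counting argument: we classify every word $\bx \in \Z_q^n$ by how it agrees with the centers coordinatewise inside each group $\Pa_{k,i}$. Fix a group $\Pa_{k,i}$ and a coordinate $h \in \Pa_{k,i}$. Since $P_h = P_{k,i}$, the $s$ values $c^1_h,\dots,c^s_h$ take exactly $k$ distinct symbols. Call them $a^1_h,\dots,a^k_h$, indexed so that $a^\ell_h$ is the common value of the centers whose indices lie in the $\ell$th block of $P_{k,i}$ (using the fixed block order). Then $c^m_h = a^{f_{k,i}(\bc^m)}_h$. For a word $\bx$ we give each $h \in \Pa_{k,i}$ a \emph{type} $l \in \{1,\dots,\min\{q,k+1\}\}$: type $l \le k$ if $x_h = a^l_h$, and type $k+1$ if $x_h$ differs from all $a^\ell_h$. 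Type $k+1$ is possible only when $k<q$, and then there are exactly $q-k$ such symbols. We let $j_{k,i}^l$ be the number of coordinates of $\Pa_{k,i}$ of type $l$. Condition (i) then holds automatically.

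Next we express distances through these counts. For each $m$, the word $\bx$ agrees with $\bc^m$ at $h \in \Pa_{k,i}$ exactly when $h$ has type $f_{k,i}(\bc^m)$. Hence
\[
d(\bx,\bc^m) = \sum_{k}\sum_{i}\left(p_{k,i} - j_{k,i}^{f_{k,i}(\bc^m)}\right).
\]
This uses that the $\Pa_{k,i}$ form a pseudo-partition of $[1,n]$. So $\bx \in I_T(S)$ if and only if its type-count tuple satisfies (ii). The key consequence is that membership in the intersection depends only on the tuple $(j_{k,i}^l)$, and that tuple lies in $\mathcal{J}$.

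Finally, we count the words realizing a fixed tuple in $\mathcal{J}$. The choices in different groups are independent. Within $\Pa_{k,i}$, we assign types to coordinates in $\binom{p_{k,i}}{j_{k,i}^1,\dots}$ ways. Each coordinate of type $l\le k$ then has exactly one admissible symbol. Each coordinate of type $k+1$ has $q-k$ symbols, which gives the factor $(q-k)^{j_{k,i}^{k+1}}$. Summing the product over $\mathcal{J}$ gives $|I_T(S)|$.

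The two cases only record when type $k+1$ exists. If $q>s$, then $k \le s < q$ for every $k$, so every group has $k+1$ types; this gives \eqref{Eq2ndFormula}. If $q\le s$, groups with $k\le q-1$ have $k+1$ types. Groups with $k=q$ have only $q$ types and no extra factor, since $\min\{q,k+1\}=q$. These give the two products in \eqref{Eq1stFormula}.

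The main point requiring care is bookkeeping rather than mathematics. We need the type labels to be well defined through the fixed ordering of blocks, and we need the agreement of $\bx$ with $\bc^m$ to be captured exactly by the index $f_{k,i}(\bc^m)$. Once this correspondence is stated precisely, the bijection between words and (tuple, type assignment, symbol choices) is immediate.
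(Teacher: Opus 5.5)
Your proof is correct and follows essentially the same counting argument as the paper. You record, for each $\Pa_{k,i}$, how many coordinates of $\bx$ fall into each of the $\min\{q,k+1\}$ agreement classes, use $d(\bx,\bc^m)=\sum_{k,i}\bigl(p_{k,i}-j_{k,i}^{f_{k,i}(\bc^m)}\bigr)$, and count the words realizing a fixed tuple by a multinomial coefficient times $(q-k)^{j_{k,i}^{k+1}}$. The only difference is cosmetic: the paper first normalizes the centers so that block $\ell$ of $P_{k,i}$ carries the symbol $\ell-1$ on $\Pa_{k,i}$, while you work directly with the per-coordinate block values $a_h^\ell$, which avoids that without-loss-of-generality step.
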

\begin{proof}
	Since the Hamming distance counts coordinatewise the differences of words (regardless of the actual different symbols in the words), we may assume, without loss of generality, that the words of $S$ are such that within $\Pa_{k,i}$ their symbols in the coordinates belong to $[0,k-1] \subseteq \Z_q$, i.e., the $k$ blocks of the partition $P_{k,i}$ are formed by the $k$ smallest elements of $\Z_q$. More formally, we may assume that a word $\bc^m \in S$ is such that for each $k \in \{1, 2, \dots, \min\{q,s\}\}$ and $i \in \{1, 2, \dots, \cS(s,k)\}$, its value in the coordinates of $\Pa_{k,i}$ is equal to the element of $\Z_q$ corresponding to $f_{k,i}(\bc^j) - 1$; in particular, the symbols of $\bc^m \in S$ within $\Pa_{k,i}$ belong to $[0,k-1] \subseteq \Z_q$. Thus, the word $\bc^m \subseteq [0, \min\{q,s\}-1]^n$. Let $\bu$ be a word of $\Z_q^n$. For $k \in \{1, 2, \dots, \min\{q,s\}\}$, $i \in \{1, 2, \dots, \cS(s,k)\}$ and $l \in \{1, 2, \dots, \min\{q,k+1\}\}$, the nonnegative integers $j_{k,i}^l$ may be considered as the distribution of symbols of $\bu$ among the coordinate positions of $\Pa_{k,i}$ as follows:
	\begin{itemize}
		\item If $q > k$, then for each $l \in [1,k]$, the value $j_{k,i}^l$ corresponds to the number of coordinates of $\bu$ equal to $l-1$ within $\Pa_{k,i}$ and $j_{k,i}^{k+1}$ to the number of coordinates belonging to $[k,q-1]$.
		\item If $q \leq k$, then for each $l \in [1,q]$, the value $j_{k,i}^l$ corresponds to the number of coordinates of $\bu$ equal to $l-1$ within $\Pa_{k,i}$.
	\end{itemize}
    Thus, as $j_{k,i}^l$ represents the distribution of symbols of $\bu$ in the coordinates indicated by $\Pa_{k,i}$, the condition~(i) $\sum_{l=1}^{\min\{q,k+1\}} j_{k,i}^l = p_{k,i}$ is satisfied. Furthermore, for each $\bc^m$, we have
	\[
	d(\bu, \bc^m) = \sum_{k=1}^{\min\{q,s\}} \sum_{i=1}^{\cS(s,k)} \left( p_{k,i} - j_{k,i}^{f_{k,i}(\bc^m)} \right) \text.
	\]
	Indeed, the previous equality holds since $p_{k,i} - j_{k,i}^{f_{k,i}(\bc^m)}$ coordinate positions contribute to the distance $d(\bu, \bc^m)$ among $\Pa_{k,i}$. In what follows, we show that $\bu \in I_T(S)$ if and only if the conditions~(i) and (ii) are satisfied.
    \begin{itemize}
        \item[($\Rightarrow$)] If $\bu \in I_T(S)$, then by the previous discussion the conditions~(i) $\sum_{l=1}^{\min\{q,k+1\}} j_{k,i}^l = p_{k,i}$ and (ii) $d(\bu, \bc^m) \leq t_m$ for all $\bc^m \in S$ are satisfied for all $\bc^m \in S$.
        \item[($\Leftarrow$)] Assume that the word $\bu$ satisfies the conditions~(i) and (ii). Firstly, by~(i), it follows that the values $j_{k,i}^l$ give a proper distribution of symbols of $\Z_q$ in the coordinates positions of $\Pa_{k,i}$. Secondly, the condition~(ii) guarantees that $d(\bu, \bc^m) \leq t_m$ for all $m \in \{1, \ldots, s\}$, i.e., $\bu \in I_T(S)$.
    \end{itemize}
    Hence, in order to prove the claim, we count the number of words $\bu$ satisfying~(i) and (ii) based on the tuples of $\mathcal{J}$.
	
	For this purpose, assume that the values $j_{k,i}^l$ form a fixed tuple of $\mathcal{J}$, i.e., $j_{k,i}^l$ satisfy~(i) and (ii). Next we count the number of words $\bu$ corresponding to these choices of $j_{k,i}^l$. The number of ways to choose the coordinates corresponding to $j_{k,i}^l$ within $\Pa_{k,i}$, where $l \in \{1, 2, \dots, \min\{q,k+1\}\}$, is equal to the multinomial coefficient
	\begin{equation} \label{Eq_mult_coefficient}
		\binom{p_{k,i}}{j_{k,i}^1, j_{k,i}^2, \dots, j_{k,i}^{ \min\{q,k+1\}}} \text.	
	\end{equation}
	Furthermore, if $k+1 \leq q$, there are $(q-k)^{j_{k,i}^{k+1}}$ choices for the symbols contributing to $j_{k,i}^{k+1}$. Therefore, if $q > s$, then $k+1 \leq q$ holds for all $k \in \{1, 2, \dots, \min\{q,s\}\}$ and, by going through all the choices for $k$ and $i$, we end up with the term in the formula for the case~(a). Thus, the claim~(a) follows by iterating over all the the tuples of $\mathcal{J}$. For the case~(b) (when $q \leq s$), we notice that the condition $k+1 \leq q$ is satisfied for $k \in  [1,q-1]$. Furthermore, if $k \in  [q,\min\{q,s\}] = \{q\}$, i.e., $k = q$, then the value $j_{k,i}^{k+1}$ does not exist (since there are no symbols left in $[k,q-1] = \emptyset$) and the different choices within $\Pa_{k,i}$ can be counted similarly as in~\eqref{Eq_mult_coefficient}. Thus, the number of words $\bu$ corresponding to the current choice of $j_{k,i}^l$ is equal to 
	\[
	\prod_{k=1}^{q-1} \left(  \prod_{i=1}^{\cS(s,k)} \binom{p_{k,i}}{j_{k,i}^1, j_{k,i}^2, \dots, j_{k,i}^{k+1}}(q-k)^{j_{k,i}^{k+1}} \right)  \cdot \prod_{i=1}^{\cS(s,q)} \binom{p_{q,i}}{j_{q,i}^1, j_{q,i}^2, \dots, j_{q,i}^{q}} \text.
	\]
	Therefore, the claim~(b) also follows by iterating over all  the tuples of $\mathcal{J}$.
\end{proof}

Note that although the formula of the theorem can seem somewhat complicated at first glance, it can actually be efficiently computed, for example, when $s$ is constant, as described in Remark~\ref{RmkComplexity}.
In the following remark, we observe that the formulas of the cases~(a)~and~(b) of the previous theorem can be combined.
\begin{remark}\label{remSimplifiedFormula}
    In the previous theorem, we can in fact use the formula
    \[
    |I_T(S)| = \sum_{\mathcal{J}} \left( \prod_{k=1}^{q-1} \left(  \prod_{i=1}^{\cS(s,k)} \binom{p_{k,i}}{j_{k,i}^1, j_{k,i}^2, \dots, j_{k,i}^{k+1}}(q-k)^{j_{k,i}^{k+1}} \right)  \cdot \prod_{i=1}^{\cS(s,q)} \binom{p_{q,i}}{j_{q,i}^1, j_{q,i}^2, \dots, j_{q,i}^{q}} \right)
    \]
    in both cases. Indeed, if $q \leq s$, then the claim immediately follows by the case~(b) of the theorem. Hence, we may assume that $q > s$ and it follows that
    \[
    \begin{split}
        \quad & \sum_{\mathcal{J}} \left( \prod_{k=1}^{q-1} \left(  \prod_{i=1}^{\cS(s,k)} \binom{p_{k,i}}{j_{k,i}^1, j_{k,i}^2, \dots, j_{k,i}^{k+1}}(q-k)^{j_{k,i}^{k+1}} \right)  \cdot \prod_{i=1}^{\cS(s,q)} \binom{p_{q,i}}{j_{q,i}^1, j_{q,i}^2, \dots, j_{q,i}^{q}} \right) \\  =&\sum_{\mathcal{J}} \left( \prod_{k=1}^{s} \left(  \prod_{i=1}^{\cS(s,k)} \binom{p_{k,i}}{j_{k,i}^1, j_{k,i}^2, \dots, j_{k,i}^{k+1}}(q-k)^{j_{k,i}^{k+1}} \right)\cdot \prod_{k=s+1}^{q-1} \left(  \prod_{i=1}^{\cS(s,k)} \binom{p_{k,i}}{j_{k,i}^1, j_{k,i}^2, \dots, j_{k,i}^{k+1}}(q-k)^{j_{k,i}^{k+1}} \right)  \cdot 1 \right) \\=&\sum_{\mathcal{J}} \left( \prod_{k=1}^{s} \left(  \prod_{i=1}^{\cS(s,k)} \binom{p_{k,i}}{j_{k,i}^1, j_{k,i}^2, \dots, j_{k,i}^{k+1}}(q-k)^{j_{k,i}^{k+1}} \right)  \cdot 1 \right) \text.
    \end{split}
    \]
    The equalities follow by observing that $\cS(s,k)=0$ for $k>s$ and that an empty product, where the lower bound is greater than the upper bound, is equal to $1$. Thus, the claim also follows in the case with $q > s$.
\end{remark}

We note that in the binary case $q=2$, we may greatly simplify the formula \eqref{Eq1stFormula} with the help of the fact that $\mathcal{S}(s,1)=1$ for each $s\geq1$.
\begin{corollary} \label{Cor_bin_intersection_size}
	Let $S = \{\bc^1, \bc^2, \dots, \bc^s\}$ be a subset of $\Z^n_2$ and $T = (t_1, t_2, \ldots, t_s)$ a list of nonnegative integers, where $s\geq2$. Let $j_{k,i}^l$ be nonnegative integers, where $k \in \{1, 2\}$, $i \in \{1, 2, \dots, \cS(s,k)\}$ and $l \in \{1, 2\}$, satisfying the following conditions: (i) $\sum_{l=1}^{2} j_{k,i}^l = p_{k,i}$ and (ii) for each $\bc^m \in S$,
	\[
	\sum_{k=1}^{2} \sum_{i=1}^{\cS(s,k)} \left( p_{k,i} - j_{k,i}^{f_{k,i}(\bc^m)} \right) \leq t_m \text.
	\]
	Let $\mathcal{J}$ denote all the tuples formed by $j_{k,i}^l$ which satisfy the conditions~(i)~and~(ii). The formula for the size of the intersection can be presented as:
		\begin{equation} \label{Eq1stFormulaBin}
		    |I_T(S)| = \sum_{\mathcal{J}} \left( \binom{p_{1,1}}{j_{1,1}^1}   \cdot \prod_{i=1}^{\cS(s,2)} \binom{p_{2,i}}{j_{2,i}^1} \right)  \text,
		\end{equation}
	    where the sum goes through all the tuples of $\mathcal{J}$.
\end{corollary}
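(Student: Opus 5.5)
We obtain the corollary by specializing case~(b) of Theorem~\ref{Thm_intersection_size} to $q=2$. Since $s\geq 2 = q$, case~(b) applies. Here $\min\{q,s\}=2$, so $k$ ranges over $\{1,2\}$, and $\min\{q,k+1\}=2$ for both values of $k$. Hence $l\in\{1,2\}$ throughout. With these substitutions, conditions~(i) and~(ii) of the theorem become exactly conditions~(i) and~(ii) stated in the corollary, so the index set $\mathcal{J}$ is the same in both statements.

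Next we simplify the product in~\eqref{Eq1stFormula} with $q=2$.

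\emph{The factor with $k=1$ to $q-1$.} This range contains only $k=1$. Since $\cS(s,1)=1$, there is a single factor,
\[
\binom{p_{1,1}}{j_{1,1}^1, j_{1,1}^2}(2-1)^{j_{1,1}^2}.
\]
The power of $1$ disappears. By~(i) we have $j_{1,1}^2 = p_{1,1}-j_{1,1}^1$, so the binomial multinomial coefficient equals the ordinary binomial coefficient $\binom{p_{1,1}}{j_{1,1}^1}$.

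\emph{The factor with $k=q=2$.} This is $\prod_{i=1}^{\cS(s,2)}\binom{p_{2,i}}{j_{2,i}^1, j_{2,i}^2}$. Again by~(i), each term reduces to $\binom{p_{2,i}}{j_{2,i}^1}$.

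Multiplying the two factors and summing over $\mathcal{J}$ gives~\eqref{Eq1stFormulaBin}.

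No step is a real obstacle. The only point needing care is checking that the index ranges of the theorem collapse correctly: $\min\{q,k+1\}=2$ for both $k$, and the $k=q$ term has no $(q-k)$ power.
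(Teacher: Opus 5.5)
Your proposal is correct and follows the same route the paper indicates: specialize case~(b) of Theorem~\ref{Thm_intersection_size} to $q=2\le s$, use $\cS(s,1)=1$ and $(q-1)^{j_{1,1}^2}=1$, and apply condition~(i) to turn each two-part multinomial coefficient into the binomial coefficient $\binom{p_{k,i}}{j_{k,i}^1}$. Your check that $\min\{q,k+1\}=2$ for both $k\in\{1,2\}$ confirms that the index set $\mathcal{J}$ is the same in the theorem and in the corollary.
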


Observe that if $n$, $q$, $S$ and $T$ are fixed, then by the theorem the size of the intersection  $I_T(S)$
depends only on the pseudo-partition of the coordinate positions formed by $\Pa_{k,i}$ and especially on their sizes $p_{k,i}$ (backing up the former claim that the values $p_{k,i}$ are essential in determining the size of the intersection). Recall that if $q \geq 2$ and $|S| = 2$, then the exact cardinality of $I_t(S)$ has been determined in~\cite[Corollary~1]{Levenshtein}. In the following corollary, we extend this result to the case with varying radii $t_m$. Recall that the corollary could also be derived based on the intersection $S_{t_1}(\bc^1) \cap S_{t_2}(\bc^2)$ of the spheres as briefly discussed in Section~\ref{SecRelatedWork}; see, for example, \cite[Theorem 14.2.2]{blahut1983theory} concerning the intersection numbers of the Hamming scheme.
\begin{corollary} \label{Cor2balls}
	Let $S = \{\bc^1, \bc^2\}$ be a subset of $\Z_q^n$, $T = (t_1, t_2)$ a list of nonnegative integers and $d$ an integer such that $d = d(\bc^1, \bc^2)$.
	\begin{itemize}
		\item[(I)] If $q=2$, then
		\[
		|I_T(S)| = \sum_{i_1=0}^{\lfloor (t_1+t_2-d)/2 \rfloor} \left( \binom{n-d}{i_1 }\sum_{i_2=i_1+d-t_2}^{t_1-i_1} \binom{d}{i_2} \right) \text. 
		\]
		\item[(II)] If $q \geq 3$, then the cardinality of the intersection $I_T(S)$ is equal to
		\[
		\sum_{i_1=0}^{\lfloor (t_1+t_2-d)/2 \rfloor} \left( \binom{n-d}{i_1}(q-1)^{i_1} \sum_{i_2=i_1+d-t_1}^{t_2-i_1} \sum_{i_3=i_1+d-t_2}^{t_1-i_1} \binom{d}{i_2, i_3, d-(i_2+i_3)} (q-2)^{d-i_2-i_3} \right) \text. 
		\]
	\end{itemize}
\end{corollary}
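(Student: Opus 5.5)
The plan is to specialize Theorem~\ref{Thm_intersection_size} to $s=2$ and then reparametrize the resulting sum. For $s=2$ there are only two partitions of $[1,2]$: $P_{1,1}=(\{1,2\})$ and $P_{2,1}=(\{1\},\{2\})$. A coordinate lies in $\Pa_{2,1}$ exactly when $\bc^1$ and $\bc^2$ differ there, so $p_{2,1}=d$ and $p_{1,1}=n-d$. The function values are $f_{1,1}(\bc^1)=f_{1,1}(\bc^2)=1$, $f_{2,1}(\bc^1)=1$ and $f_{2,1}(\bc^2)=2$. Throughout, I adopt the convention that a binomial or multinomial coefficient with a negative lower entry (or an entry exceeding the top) is $0$. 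This lets me extend summation ranges freely, so I only need to check that every admissible tuple of $\mathcal{J}$ is captured and that no spurious nonzero terms appear.

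\emph{Case (I), $q=2$.} Corollary~\ref{Cor_bin_intersection_size} applies, and its free variables are $j_{1,1}^1$ and $j_{2,1}^1, j_{2,1}^2$, with $j_{1,1}^2=n-d-j_{1,1}^1$ and $j_{2,1}^1+j_{2,1}^2=d$. I set
\[
i_1=p_{1,1}-j_{1,1}^1, \qquad i_2=j_{2,1}^2=d-j_{2,1}^1 .
\]
Here $i_1$ counts disagreements with both centers inside $\Pa_{1,1}$, and $i_2$ counts positions of $\Pa_{2,1}$ that disagree with $\bc^1$. Condition~(ii) then reads $i_1+i_2\le t_1$ (for $\bc^1$) and $i_1+(d-i_2)\le t_2$ (for $\bc^2$). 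Equivalently, $i_1+d-t_2\le i_2\le t_1-i_1$. The summand is $\binom{n-d}{i_1}\binom{d}{i_2}$ after using symmetry of binomials. The inner range is nonempty only if $i_1+d-t_2\le t_1-i_1$, that is, $i_1\le\lfloor (t_1+t_2-d)/2\rfloor$, which gives the outer range. Any values with $i_2<0$ or $i_2>d$ contribute zero by the convention above.

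\emph{Case (II), $q\ge 3$.} I use Remark~\ref{remSimplifiedFormula}, or directly case~(a) or~(b) of Theorem~\ref{Thm_intersection_size}, since $k\le 2<q$. On $\Pa_{1,1}$ the variables are $j_{1,1}^1, j_{1,1}^2$ with weight $(q-1)^{j_{1,1}^2}$; I set $i_1=j_{1,1}^2$. On $\Pa_{2,1}$ the variables are $j_{2,1}^1, j_{2,1}^2, j_{2,1}^3$ with weight $(q-2)^{j_{2,1}^3}$; I set $i_2=j_{2,1}^1$ and $i_3=j_{2,1}^2$, so that $j_{2,1}^3=d-i_2-i_3$. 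Condition~(ii) becomes
\[
i_1+d-i_2\le t_1, \qquad i_1+d-i_3\le t_2,
\]
that is, $i_2\ge i_1+d-t_1$ and $i_3\ge i_1+d-t_2$. The stated upper limits $i_2\le t_2-i_1$ and $i_3\le t_1-i_1$ are not extra constraints. Whenever $i_2+i_3\le d$, the lower bound on the other variable forces them; and when $i_2+i_3>d$ the multinomial coefficient vanishes. Hence extending or truncating the ranges to exactly those in the statement changes nothing. Finally, summing the two lower bounds with $i_2+i_3\le d$ gives $2i_1\le t_1+t_2-d$, which justifies the outer limit on $i_1$.

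The main obstacle is only this bookkeeping of summation ranges. The two lower bounds come directly from condition~(ii), while the upper bounds and the limit on $i_1$ are artefacts of the constraint $i_2+i_3\le d$. I would handle it cleanly by fixing the vanishing convention for coefficients at the outset, so that each range claim reduces to checking that the omitted terms are zero.
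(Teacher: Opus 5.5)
Your proposal is correct and follows essentially the same route as the paper: you specialize Theorem~\ref{Thm_intersection_size} (via Corollary~\ref{Cor_bin_intersection_size} when $q=2$) with $p_{2,1}=d$ and $p_{1,1}=n-d$, use the same substitutions for $i_1,i_2,i_3$, and read the summation ranges off condition~(ii). The only difference is that the paper obtains the ranges by Fourier--Motzkin elimination, whereas you check them directly under an explicit vanishing convention for the coefficients; your version also makes visible that the constraint $i_2+i_3\le d$ is enforced by the multinomial coefficient, a point the paper leaves implicit.
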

\begin{proof}
	Observe first that by~\eqref{Eq_Pqs}, we have $P_2(2) = 2$; in particular, the pseudo-partition of the coordinate positions is formed by $\Pa_{1,1}$ and $\Pa_{2,1}$, which respectively correspond to the partitions $P_{1,1} = (\{1,2\})$ and $P_{2,1} = (\{1\}, \{2\})$ of $[1,2]$ (where the list notation emphasizes that the blocks are listed in the fixed order declared above). It is immediate that $p_{2,1} = d = d(\bc^1, \bc^2)$ and $p_{1,1} = n - p_{2,1}$. For the rest of the proof, we may assume without loss of generality that $\bc^1 = \zero$ and $\bc^2 = 0^{p_{1,1}}1^{p_{2,1}}$. Hence, we have $\Pa_{1,1} = [1,p_{1,1}]$ and $\Pa_{2,1} = [p_{1,1}+1,n]$. The proof now divides into the following two cases depending on whether (I) $q = 2$ or (II) $q \geq 3$.
	
	(I) Assume first that $q=2$. Let the nonnegative integers $j_{1,1}^1$, $j_{1,1}^2$, $j_{2,1}^1$ and $j_{2,1}^2$, as defined in Corollary~\ref{Cor_bin_intersection_size} (or Theorem~\ref{Thm_intersection_size}) satisfy the following conditions: (i) $j_{1,1}^1 + j_{1,1}^2 = p_{1,1}$ and $j_{2,1}^1 + j_{2,1}^2 = p_{2,1}$ as well as (ii) $(p_{1,1} - j_{1,1}^1) + (p_{2,1} - j_{2,1}^1) = j_{1,1}^2 + j_{2,1}^2 \leq t_1$ and $(p_{1,1} - j_{1,1}^1) + (p_{2,1} - j_{2,1}^2) = j_{1,1}^2 + j_{2,1}^1 \leq t_2$. Furthermore, denoting $j_{1,1}^2 = i_1$ and $j_{2,1}^2 = i_2$, the inequalities~(ii) can be written as follows: $i_1 + i_2 \leq t_1$ and $i_1 + (d-i_2) \leq t_2$. Eliminating first $i_2$ using the well-known Fourier-Motzkin elimination method~\cite{FMElimination}, we obtain $i_1 + d - t_2 \leq i_2 \leq t_1 - i_1$ and $i_1 + d - t_2 \leq t_1 - i_1$, which finally imply $i_1 + d - t_2 \leq i_2 \leq t_1 - i_1$ and $i_1 \leq (t_1+t_2-d)/2$. Therefore, the claim follows by \eqref{Eq1stFormulaBin} in Collary~\ref{Cor_bin_intersection_size} 
    when we take into account the equalities~(i) and the fact that $j_{k,i}^l$ (or $i_1$ and $i_2$) are nonnegative integers, that is, 
    $\mathcal{J}=\{(n-d-i_1,i_1,d-i_2,i_2)\mid 0\le i_1\le \lfloor (t_1+t_2-d)/2 \rfloor,i_1+d-t_2\le i_2 \le t_1-i_1   \}.$

	(II) Assume then that $q \geq 3$. Let the nonnegative integers $j_{1,1}^1$, $j_{1,1}^2$, $j_{2,1}^1$, $j_{2,1}^2$ and $j_{2,1}^3$ of Theorem~\ref{Thm_intersection_size} satisfy the following conditions: (i) $j_{1,1}^1 + j_{1,1}^2 = p_{1,1}$ and $j_{2,1}^1 + j_{2,1}^2 + j_{2,1}^3 = p_{2,1}$ as well as (ii) $(p_{1,1} - j_{1,1}^1) + (p_{2,1} - j_{2,1}^1) = j_{1,1}^2 + (d- j_{2,1}^1) \leq t_1$ and $(p_{1,1} - j_{1,1}^1) + (p_{2,1} - j_{2,1}^2) = j_{1,1}^2 + (d - j_{2,1}^2) \leq t_2$. Furthermore, denoting $j_{1,1}^2 = i_1$, $j_{2,1}^1 = i_2$, $j_{2,1}^2 = i_3$ and $j_{2,1}^3 = i_4$ the inequalities~(ii) can be written as follows: $i_1 + (d- i_2) \leq t_1$ and $i_1 + (d - i_3) \leq t_2$. Moreover, taking into account the equalities~(i), the inequalities~(ii) can formulated as $i_1 + i_3 + i_4 \leq t_1$ and $i_1 + i_2 + i_4 \leq t_2$, which further imply $i_1 + i_3 \leq t_1$ and $i_1 + i_2 \leq t_2$. Therefore, eliminating $i_2$ and $i_3$ from the inequalities 
    \[
    \left\{
    \setlength\arraycolsep{0pt}
    \begin{array}{ r @{} r  >{{}}c<{{}} r  >{{}}c<{{}}  r }
    i_1 + (d- i_2) &~\leq t_1 \\
    i_1 + (d - i_3) &~\leq t_2   \\
    i_1 + i_3 &~\leq t_1 \\ 
    i_1 + i_2 &~\leq t_2       \\
    \end{array}
    \right.
    \]
       by the Fourier-Motzkin method, we obtain that 
       \[
    \left\{
    \setlength\arraycolsep{0pt}
    \begin{array}{ r @{} r  >{{}}c<{{}} r  >{{}}c<{{}}  r }
    i_1 + d - t_1 &~\leq i_2 &\leq t_2 - i_1& \\
    i_1 + d - t_2 &~\leq i_3 &\leq t_1 - i_1&   \\
    i_1 + d - t_1 & &\leq t_2 - i_1& \\ 
    i_1 + d - t_2 & &\leq t_1 - i_1&.       \\
    \end{array}
    \right.
    \]
   Furthermore, the latter two inequalities are both equivalent with $i_1 \leq (t_1+t_2-d)/2$.  Therefore, the claim follows by \eqref{Eq2ndFormula} when we take into account the equalities~(i) and the fact that $j_{k,i}^l$ (or $i_1$, $i_2$ and $i_3$) are nonnegative integers.
\end{proof}
Notice that if we choose $t_1 = t_2 = t$ in the previous corollary, then Corollary~1 of~\cite{Levenshtein} immediately follows. Consider then the case with $q = 2$ and $|S| = 3$. Previously, in~\cite{chll}, it has been shown that then the cardinality of $I_T(S)$ is based solely on the pairwise distances between the words of $S \ (\subseteq \Z_2^n)$. Moreover, in~\cite[Theorem~6]{yaakobi2018uncertainty}, the maximum cardinality $N_t^2(3,d)$ has been determined when $n$ is large enough. In the following corollary of Theorem~\ref{Thm_intersection_size} (or Corollary~\ref{Cor_bin_intersection_size}), we extend these results by presenting an exact formula for the size of the intersection $I_T(S)$, where $T = (t_1, t_2, t_3)$.
\begin{corollary} \label{Cor3balls}
    Let $S = \{\bc^1, \bc^2, \bc^3\}$ be a subset of $\Z^n_2$, $T = (t_1, t_2, t_3)$ a list of nonnegative integers as well as $d_1$, $d_2$ and $d_3$ integers such that $d_1 = d(\bc^1, \bc^2)$, $d_2 = d(\bc^1, \bc^3)$ and $d_3 = d(\bc^2, \bc^3)$. If
    \[
    \left\{
	\begin{aligned}
		p_{2,1} &= \left( -d_1 + d_2 + d_3 \right)/2 \\
		p_{2,2} &= \left(  d_1 - d_2 + d_3 \right)/2 \\
		p_{2,3} &= \left(  d_1 + d_2 - d_3 \right)/2
	\end{aligned}
	\right.
    \]
    and $p_{1,1} = n - p_{2,1} - p_{2,2} - p_{2,3}$, then the cardinality of the intersection $I_T(S)$ is equal to
    \begin{equation} \label{Eq_3ball_intersection}
        |I_T(S)| = \sum_{i_1, i_2, i_3, i_4} \binom{p_{1,1}}{i_1}\binom{p_{2,1}}{i_2}\binom{p_{2,2}}{i_3}\binom{p_{2,3}}{i_4} \text,
    \end{equation}
    where the integers $i_1$, $i_2$, $i_3$ and $i_4$ satisfy the following conditions: 
    \begin{itemize}
        \item[(a)] $0 \leq i_1 \leq p_{1,1}$,
        
        \item[(b)] $0 \leq i_4 \leq -i_1 - (p_{2,1} + p_{2,2})/2 + (t_2+t_3)/2$,
        
        \item[(c)] $i_1 + (p_{2,1} + 2p_{2,2} + p_{2,3})/2 - (t_1+t_3)/2 \leq i_3 \leq p_{2,2}$ and
        
        \item[(d)] $\max\{i_1-i_3-i_4+p_{2,1}+p_{2,2}+p_{2,3}-t_1, i_1+i_3+i_4+p_{2,1}-t_2\} \leq i_2 \leq -i_1+i_3-i_4-p_{2,2}+t_3$.
    \end{itemize}
\end{corollary}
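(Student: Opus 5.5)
The plan is to specialize Corollary~\ref{Cor_bin_intersection_size} to $s=3$, as was done for two balls in Corollary~\ref{Cor2balls}. For $q=2$ and $s=3$ we have $\cS(3,1)=1$ and $\cS(3,2)=3$. So the pseudo-partition of $[1,n]$ consists of $\Pa_{1,1}$ and $\Pa_{2,1},\Pa_{2,2},\Pa_{2,3}$. These correspond to $P_{1,1}=(\{1,2,3\})$, $P_{2,1}=(\{1,2\},\{3\})$, $P_{2,2}=(\{1,3\},\{2\})$ and $P_{2,3}=(\{1\},\{2,3\})$. The partition $P_{3,1}$ does not occur because $q=2$.

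First I identify the $p_{k,i}$. The words $\bc^1$ and $\bc^2$ differ exactly on the coordinates whose partition separates $1$ from $2$, so $d_1=p_{2,2}+p_{2,3}$. Likewise $d_2=p_{2,1}+p_{2,3}$ and $d_3=p_{2,1}+p_{2,2}$. Solving this $3\times 3$ linear system gives the stated expressions, and $p_{1,1}=n-p_{2,1}-p_{2,2}-p_{2,3}$ is immediate. This also re-proves that $|I_T(S)|$ depends only on the distances. Without loss of generality $\bc^1=\zero$, and $\bc^2,\bc^3$ are constant ($0$ or $1$) on each $\Pa_{k,i}$ according to $f_{k,i}$.

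Next I parametrize a word $\bu$ by four free integers, one per block. By condition~(i) the other $j_{k,i}^l$ are determined, and $\binom{p}{j}=\binom{p}{p-j}$ shows each summand in \eqref{Eq1stFormulaBin} is $\binom{p_{1,1}}{i_1}\binom{p_{2,1}}{i_2}\binom{p_{2,2}}{i_3}\binom{p_{2,3}}{i_4}$. I take $i_1$ to be the number of ones of $\bu$ in $\Pa_{1,1}$. For $i_2,i_3,i_4$ I take the number of coordinates of $\Pa_{2,1},\Pa_{2,2},\Pa_{2,3}$ on which $\bu$ agrees with a chosen one of the two blocks. Condition~(ii) then becomes three linear inequalities, for example
\[
d(\bu,\bc^1)=i_1+(p_{2,1}-i_2)+i_3+i_4\le t_1
\]
when $i_2$ counts agreements with $\bc^1$ on $\Pa_{2,1}$ and $i_3,i_4$ count disagreements with $\bc^1$. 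There are analogous expressions for $\bc^2$ and $\bc^3$. Together with the box constraints $0\le i_r\le p$ these define $\mathcal{J}$.

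Finally I apply Fourier--Motzkin elimination in the order $i_2$, then $i_4$ (or $i_3$), then the last one. This gives conditions of the nested form~(a)--(d). First, $i_2$ is bounded by the two lower bounds and one upper bound from the three distance inequalities, which is exactly the shape of~(d). Pairing each lower bound with the upper bound eliminates $i_2$. Adding pairs of inequalities produces the half-sums $(t_2+t_3)/2$ and $(t_1+t_3)/2$ seen in~(b) and~(c). The main obstacle is bookkeeping. I must choose which of the two complementary counts each $i_r$ denotes, and which inequalities are redundant, so that the resulting system matches~(a)--(d) exactly, including the asymmetry between $i_3$ and $i_4$. I would try the natural choices first and flip $i_r\mapsto p-i_r$ as needed, using the binomial symmetry. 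Any redundant constraints dropped must be genuinely implied, or else be harmless because the corresponding binomial coefficient vanishes outside $[0,p]$.
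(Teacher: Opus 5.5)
Your proposal is correct and follows essentially the same route as the paper. Both specialize Corollary~\ref{Cor_bin_intersection_size} to $s=3$, solve the $3\times 3$ system for $p_{2,1},p_{2,2},p_{2,3}$, write condition~(ii) as three linear inequalities, and eliminate $i_2$ by Fourier--Motzkin.

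To land exactly on (a)--(d), take $i_1=j_{1,1}^2$ and let $i_2,i_3,i_4$ all count the zeros of $\bu$ (agreements with $\bc^1=\zero$) in $\Pa_{2,1},\Pa_{2,2},\Pa_{2,3}$. Your sample inequality instead counts disagreements for $i_3,i_4$, which gives a different system. With the paper's choice, eliminating $i_2$ alone already yields (d), the lower bound in (c) and the upper bound in (b), so no further elimination is needed. The omitted box constraints are harmless because the corresponding binomial coefficients vanish.
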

\begin{proof}
    Observe first that by~\eqref{Eq_Pqs}, we have $P_2(3) = \cS(3,1) + \cS(3,2) = 1 + 3 = 4$; in particular, the pseudo-partition of the coordinate positions is formed by $\Pa_{1,1}$, $\Pa_{2,1}$, $\Pa_{2,2}$ and $\Pa_{2,3}$ with $P_{1,1} = (\{1,2,3\})$, $P_{2,1} = (\{1,2\}, \{3\})$, $P_{2,2} =  (\{1,3\}, \{2\})$ and $P_{2,3} = (\{1\}, \{2,3\})$. Taking into account the isomorphisms of $\Z_2^n$, we may without loss of generality assume for the rest of the proof that $\bc^1 = \zero$, $\bc^2 = 0^{p_{1,1} + p_{2,1}}1^{p_{2,2} + p_{2,3}}$ and $\bc^2 = 0^{p_{1,1}}1^{p_{2,1}}0^{p_{2,2}}1^{p_{2,3}}$. Based on the pairwise distances $d_1 = d(\bc^1, \bc^2)$, $d_2 = d(\bc^1, \bc^3)$ and $d_3 = d(\bc^2, \bc^3)$ as well as $p_{k,i}$ we obtain the following system of linear equalities:
	\[
	\left\{
	\begin{alignedat}{3}
		        &  {} & p_{2,2} & +{} & p_{2,3} & = d_1 \\
		p_{2,1} &  {} &         & +{} & p_{2,3} & = d_2 \\
		p_{2,1} & +{} & p_{2,2} &  {} &         & = d_3
	\end{alignedat}
	\right. \ \iff \
	\left\{
	\begin{aligned}
		p_{2,1} &= \left( -d_1 + d_2 + d_3 \right)/2 \\
		p_{2,2} &= \left(  d_1 - d_2 + d_3 \right)/2 \\
		p_{2,3} &= \left(  d_1 + d_2 - d_3 \right)/2\text.
	\end{aligned}
	\right. 
	\]
	Based on the solution, the values $p_{2,1}$, $p_{2,2}$ and $p_{2,3}$ can be expressed in terms of the pairwise distances. Furthermore, since $p_{1,1} = n - p_{2,1} - p_{2,2} - p_{2,3}$, that is also the case with $p_{1,1}$. Therefore, the size of the intersection $I_T(S)$ can be calculated (by 
    Corollary~\ref{Cor_bin_intersection_size}) solely based on $d_1$, $d_2$ and $d_3$.

    In order to prove the formula~\eqref{Eq_3ball_intersection}, we next consider which values are possible for $j_{k,i}^l$ of 
    Corollary~\ref{Cor_bin_intersection_size}. For this purpose, we denote $j_{1,1}^2 = i_1$, $j_{2,1}^1 = i_2$, $j_{2,2}^1 = i_3$ and $j_{2,3}^1 = i_4$. The nonnegative integers $j_{k,i}^l$ satisfy the following conditions of Corollary~\ref{Cor_bin_intersection_size}:
    (i) $j_{k,i}^1 + j_{k,i}^2 = p_{k,i}$ for each $(k,i) \in \{(1,1), (2,1), (2,2), (2,3)\}$ and (ii) $i_1 + (p_{2,1} - i_2) + (p_{2,2} - i_3) + (p_{2,3} - i_4) \leq t_1$, $i_1 + (p_{2,1} - i_2) + i_3 + i_4 \leq t_2$ and $i_1 + i_2 + (p_{2,2} - i_3) + i_4 \leq t_3$. First eliminating $i_2$ using the Fourier-Motzkin elimination method, we obtain the following system of linear inequalities equivalent to~(ii): 
    \begin{itemize}
        \item $\max\{i_1-i_3-i_4+p_{2,1}+p_{2,2}+p_{2,3}-t_1, i_1+i_3+i_4+p_{2,1}-t_2\} \leq i_2 \leq -i_1+i_3-i_4-p_{2,2}+t_3$,
        
        \item $i_1-i_3-i_4+p_{2,1}+p_{2,2}+p_{2,3}-t_1 \leq -i_1+i_3-i_4-p_{2,2}+t_3$ and 
        
        \item $i_1+i_3+i_4+p_{2,1}-t_2 \leq -i_1+i_3-i_4-p_{2,2}+t_3$.
    \end{itemize}
    Furthermore the latter two inequalities can be written equivalently as follows: $i_1 + (p_{2,1} + 2p_{2,2} + p_{2,3})/2 - (t_1+t_3)/2 \leq i_3$ and $i_4 \leq -i_1 - (p_{2,1} + p_{2,2})/2 + (t_2+t_3)/2$. Finally, when we take into account the equalities~(i) and the fact that $i_1$, $i_2$, $i_3$ and $i_4$ are nonnegative integers, the conditions~(a)--(d) follow as well as the claim.
\end{proof}

In the following remark, we compare the previous corollary to the results of~\cite{yaakobi2018uncertainty}.
\begin{remark} \label{Rem3balls}
    In~\cite[Lemma~4]{yaakobi2018uncertainty}, it has been shown that if $\bc^1, \bc^2, \bc^3 \in \Z_q^n$ are such that $d(\bc^1, \bc^2) = d(\bc^1, \bc^3) = d(\bc^2, \bc^3) = d$ and $d$ is even, then
    \[
    |I_t(S)| = \sum_{i_1, i_2, i_3, i_4} \binom{n-3d/2}{i_1}\binom{d/2}{i_2}\binom{d/2}{i_3}\binom{d/2}{i_4} \text,
    \]
    where $i_1$, $i_2$, $i_3$ and $i_4$ satisfy the following constraints: \begin{itemize}
        \item[(A)] $0 \leq i_1 \leq t - d/2$, \item[(B)] $i_1 + d/2 - t \leq i_4 \leq t - d/2 - i_1$, 
        \item[(C)] $d - t + i_1 \leq i_3 \leq t - (i_1 + i_4)$ and 
        \item[(D)] $\max\{i_1-i_3-i_4+3d/2-t, i_1+i_3+i_4+d/2-t\} \leq i_2 \leq t - (i_1+i_4+d/2-i_3)$.
    \end{itemize}Now we give the following observations concerning the constraints (a)--(d) of Corollary~\ref{Cor3balls} and (A)--(D):
    \begin{itemize}
        \item The condition~(a) of Corollary~\ref{Cor3balls} can be replaced by $0 \leq i_1 \leq (t_2+t_3)/2 - (p_{2,1} + p_{2,2})/2$ since if $i_1 > (t_2+t_3)/2 - (p_{2,1} + p_{2,2})/2$, then $(p_{2,1} + p_{2,2})/2 + (t_2+t_3)/2 - i_1 < 0$ and no nonnegative integer $i_4$ satisfies the condition~(b).

        \item The constraint~(B) can be replaced by $0 \leq i_4 \leq t - d/2 - i_1$ since $i_1 + d/2 - t \leq 0$ by~(A).

        \item The constraint~(C) can be replaced by $d - t + i_1 \leq i_3 \leq d/2$ since $d/2 \leq t - (i_1 + i_4)$ by~(B) (and $i_3$ is trivially upper bounded by $d/2$ as $\binom{d/2}{i_3}$ is assumed to be equal to $0$ for $i_3 > d/2$).
    \end{itemize}
    Thus, if we substitute $t_1 = t_2 = t_3 = t$ and $d_1 = d_2 = d_3 = d$ in Corollary~\ref{Cor3balls} as well as take the previous observations into account, then Lemma~4 of~\cite{yaakobi2018uncertainty} is immediately obtained.
    
    Similarly, Lemma~5 of~\cite{yaakobi2018uncertainty}, where $\bc^1, \bc^2, \bc^3 \in \Z_q^n$ are such that $d(\bc^1, \bc^2) = d(\bc^1, \bc^3) = d$, $d(\bc^2, \bc^3) = d+1$ and $d$ is odd, also follows by our corollary.
    \end{remark}

\subsection{Additional Information Required for Calculating the Cardinality of the Intersection} \label{subsecNumPara}

Recall (from~\cite{Iiro}) that if $q=2$ and $|S| \geq 4$ or $q \geq 3$ and $|S| \geq 3$, then the size of the intersection $I_t(S)$ is no longer determined only by the pairwise distances of the words in $S$. With respect to Theorem~\ref{Thm_intersection_size}, this is due to the fact that the values $p_{k,i}$ do not depend only on the pairwise distances as is further discussed in the following example. 
\begin{example} \label{Ex_1auxparameter}
	Let $q \geq 3$. Consider the intersections $I_2(S)$ and $I_2(S')$, where $S = \{0000, 1100, 0110\}$ and $S' = \{0000,1100,2200\}$. Recalling the notation of Example~\ref{Ex_notation}, we have $p_{1,1} = p_{2,1} = p_{2,2} = p_{2,3} = 1$ and $p_{3,1} = 0$ for the set $S$. Similarly, it can be obtained for $S'$ that $p_{1,1} = p_{3,1} = 2$ and $p_{2,1} = p_{2,2} = p_{2,3} = 0$. Thus, the result of Example~\ref{exSubInt} stating that the sizes of $I_2(S)$ and $I_2(S')$ are different is consistent with Theorem~\ref{Thm_intersection_size}; in fact, the exact values $|I_2(S)|=4q-2$ and $|I_2(S')|=q^2$ could also be obtained by the theorem (although the \emph{ad hoc} method of the example is simpler).
	
	Consider then an intersection of four binary Hamming balls. As in~\cite[Example~2.4.12]{covcode}, let $U = \{\bu^1, \bu^2, \bu^3, \bu^4\}$ and $V = \{\bv^1, \bv^2, \bv^3, \bv^4\}$, where $\bu^1 = \bv^1 = 0000$, $\bu^2 = \bv^2 = 1100$, $\bu^3 = \bv^3 = 1010$, $\bu^4 = 0110$ and $\bv^4 = 1001$. All the pairwise distances of the words in $U$ and $V$ are equal to $2$. However, by~\cite{covcode}, we have $|I_2(U)| = 4 \neq 5 = |I_2(V)|$. It is straightforward to verify that the values $p_{k,i}$ corresponding to $U$ and $V$ are different; for example, $p_{1,1} = 1$ for $U$ and $p_{1,1} = 0$ for $V$. Moreover, the sizes of the intersections $I_2(U)$ and $I_2(V)$ could also be computed according to Theorem~\ref{Thm_intersection_size}.
\end{example}

In conclusion, if $q=2$ and $|S| \geq 4$, or $q \geq 3$ and $|S| \geq 3$, then in order to determine all $p_{k,i}$, some additional information besides the pairwise distances of the words in $S$ is required. (However, for certain $S$ and $T$, it might be possible to determine the size of the intersection $I_T(S)$ without any additional information; in particular, in the trivial case when $|I_T(S)| = 0$.) Notice also that the required additional information does not depend on whether we are considering the case with varying radii $t_m$ or the case with all the radii being equal to $t$. In what follows, we focus on the required additional information; in particular, we show that if $q=2$ and $|S| = 4$ or $q \geq 3$ and $|S| = 3$, then one additional parameter is enough to determine the size of the intersection. For this purpose, let $S = \{\bc^1, \bc^2, \dots, \bc^s\}$ be a subset of $\Z^n_q$. Recall that the sets $\Pa_{k,i}$ form a pseudo-partition of the coordinate positions $[1,n]$. Denote the set of all pairs of $S$, i.e., all subsets of size $2$, by $S[2]$. In order to express $d(\bc^j, \bc^{j'})$ as a sum of the values $p_{k,i}$, we define an auxiliary function $g_{k ,i}: S[2] \to \{0,1\}$, for each $k \in \{1, 2, \dots, \min\{q,s\}\}$ and $i \in \{1, 2, \dots, \cS(s,k)\}$, as follows:
\[
g_{k ,i}(\bc, \bc') =
\begin{cases}
	0 \text, & \text{if $\ind(\bc)$ and $\ind(\bc')$ belong to the same block of the partition } P_{k,i} \text; \\
	1 \text, & \text{if $\ind(\bc)$ and $\ind(\bc')$ belong to different blocks of the partition } P_{k,i} \text.
\end{cases}
\]
Observe that $g_{1,1}(\bc, \bc') = 0$ for all $\bc, \bc' \in S$ since the partition $P_{1,1} = \{[1,s]\}$. Hence, the distance of the words $\bc$ and $\bc'$ belonging to $S$ can now be written as follows:
\begin{equation} \label{Eq_linear_system}
	d(\bc, \bc') = \sum_{k=2}^{\min\{q,s\}} \sum_{i=1}^{\cS(s,k)} g_{k ,i}(\bc, \bc') \cdot p_{k,i} \text.
\end{equation}
Indeed, the value $p_{k,i}$ contributes to the distance $d(\bc, \bc')$ if and only if the coordinates of $\bc$ and $\bc'$ disagree within $\Pa_{k,i}$. Since each pair of words belonging to $S[2]$ corresponds to an equation of type~\eqref{Eq_linear_system}, we may form a system of $\binom{s}{2}$ linear equalities with $P_q(s)-1$ variables $p_{k,i}$ (excluding $p_{1,1}$); for convenience, denote the system by $\mathcal{L}$. Furthermore, we assume that the pairs of $S[2]$ are listed in some fixed order and so are the partitions $P_{k,i}$; the chosen order does not matter, but for the later discussions it is helpful that it is fixed. Notice that the number of variables $P_q(s)-1$ is larger (even significantly larger, as discussed later in Remark~\ref{Rmk_asymptotics}) than the number of linear equalities $\binom{s}{2}$, when $q=2$ and $|S| \geq 4$ or $q \geq 3$ and $|S| \geq 3$; in particular, for all distinct $j_1, j_2 \in [1,s]$, the partition $\{\{j_1, j_2\}, [1,s] \setminus \{j_1, j_2\}\}$ into two blocks contributes to the value $P_q(s)-1$ implying that $P_q(s)-1 \geq \binom{s}{2}$. 

In what follows, we focus more closely on the linear independence of the system $\mathcal{L}$. For this purpose, let $\bp = (p_{k,i})_{(P_q(s)-1) \times 1}$ and $\bd = (d(\bc,\bc'))_{\binom{s}{2} \times 1}$ be column vectors, where $\bp$ consists of all $p_{k,i}$'s except $p_{1,1}$. Furthermore, define an $\binom{s}{2} \times (P_q(s)-1)$ matrix $M_{q,s}$ as follows: each row of the matrix corresponds to a pair $\{\bc, \bc'\} \in S[2]$ and consists of the coefficients $g_{k ,i}(\bc, \bc')$ of~\eqref{Eq_linear_system}. The construction of the matrix $M_{q,s}$ is illustrated in the following example.
\begin{example} \label{ExM2s}
    In what follows, we construct the matrices $M_{q,s}$ for $q = 2$ and $s \in \{2,3,4\}$.
    \begin{itemize}
        \item For $s = 2$, the only pair of $S[2]$ is $\{1,2\}$ and the only partition corresponding to the columns is $P_{2,1} = \{\{1\}, \{2\}\}$. Hence, we have $M_{2,2} = (1)$.
        \item For $s=3$, we fix the order of the pairs of $S[2]$ as $\{1,2\}$, $\{1,3\}$ and $\{2,3\}$ as well as the order of the partitions $P_{2,i}$ as in Example~\ref{Ex_notation}: $P_{2,1} = \{\{1,2\}, \{3\}\}$, $P_{2,2} =  \{\{1,3\}, \{2\}\}$ and $P_{2,3} = \{\{1\}, \{2,3\}\}$. Now we have
        \[
        M_{2,3} = 
        \begin{pmatrix}    
            0 & 1 & 1 \\
            1 & 0 & 1 \\
            1 & 1 & 0
        \end{pmatrix} \text.
        \]
        \item Assume that $s = 4$. Now $S[2] = \{\{1,2\}, \{1,3\}, \{1,4\}, \{2,3\}, \{2,4\}, \{3,4\}\}$, where the pairs are listed in the chosen (fixed) order. Furthermore, we fix the order of the partitions $P_{2,i}$ as follows: $P_{2,1} = \{\{1\},\{2,3,4\}\}$, $P_{2,2} = \{\{1,2\},\{3,4\}\}$, $P_{2,3} = \{\{1,3,4\},\{2\}\}$, $P_{2,4} = \{\{1,2,3\},\{4\}\}$, $P_{2,5} = \{\{1,4\},\{2,3\}\}$, $P_{2,6} = \{\{1,2,4\},\{3\}\}$ and $P_{2,7} = \{\{1,3\},\{2,4\}\}$. Now we have
        \[
        M_{2,4} = 
        \begin{pmatrix}    
            1 & 0 & 1 & 0 & 1 & 0 & 1 \\
            1 & 1 & 0 & 0 & 1 & 1 & 0 \\
            1 & 1 & 0 & 1 & 0 & 0 & 1 \\
            0 & 1 & 1 & 0 & 0 & 1 & 1 \\
            0 & 1 & 1 & 1 & 1 & 0 & 0 \\
            0 & 0 & 0 & 1 & 1 & 1 & 1
        \end{pmatrix} \text.
        \]
    \end{itemize}
\end{example}

The system $\mathcal{L}$ of linear equations can now be expressed in a matrix form as follows: $M_{q,s} \cdot \bp = \bd$. The linear equations of $\mathcal{L}$ are linearly independent if and only if the rows of $M_{q,s}$ are. In the following lemma, we show that this is indeed the case for $M_{q,s}$ and present a proof that is based on a connection to the \emph{Johnson graph} $J(s,2)$. The graph $J(s,2)$ has the vertex set $V(J(s,2)) = S[2]$, and two vertices $\{j_1,j_2\}$ and $\{j'_1,j'_2\}$ are adjacent if $|\{j_1,j_2\} \cap \{j'_1,j'_2\}| = 1$. For more details on the Johnson graph $J(s,2)$, the interested reader is referred to~\cite[Section~9.1]{BrouweretalDRG}.

\begin{lemma}
	The rows of the matrix $M_{q,s}$ are linearly independent.
\end{lemma}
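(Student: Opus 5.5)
It suffices to find a square $\binom{s}{2}\times\binom{s}{2}$ submatrix of $M_{q,s}$ that is nonsingular. Since $q\ge 2$, every partition of $[1,s]$ into two blocks is available as a column. I will use only the columns indexed by the partitions $Q_{\{j_1,j_2\}}=\{\{j_1,j_2\},[1,s]\setminus\{j_1,j_2\}\}$, for $\{j_1,j_2\}\in S[2]$, together with the singleton-type partitions where needed. The case $s=2$ is trivial because $M_{q,2}=(1)$. For $s=3$ the partitions $Q_{\{j_1,j_2\}}$ are exactly $P_{2,1},P_{2,2},P_{2,3}$. The resulting matrix is $J-I$, which has determinant $2\neq0$. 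So assume $s\ge 4$, in which case the $\binom{s}{2}$ partitions $Q_{\{j_1,j_2\}}$ are distinct.

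Compute the entry in row $\{a,b\}$ and column $Q_{\{j_1,j_2\}}$, which is $g(\{a,b\})$ for that partition. The entry is $0$ when $\{a,b\}=\{j_1,j_2\}$. It is also $0$ when $\{a,b\}$ is disjoint from $\{j_1,j_2\}$, since then both elements lie in the complementary block. It is $1$ exactly when $|\{a,b\}\cap\{j_1,j_2\}|=1$. Hence this square submatrix is precisely the adjacency matrix $A$ of the Johnson graph $J(s,2)$.

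The eigenvalues of $J(s,2)$ are well known (see~\cite{BrouweretalDRG}): they are $2(s-2)$, $s-4$ and $-2$. The value $s-4$ vanishes when $s=4$, so $A$ is singular in that case and the construction must be adjusted there. For $s\ge5$ all three eigenvalues are nonzero, so $A$ is invertible and the rows of $M_{q,s}$ are linearly independent.

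For $s=4$ I would supplement the argument. One option is to use the explicit matrix $M_{2,4}$ of Example~\ref{ExM2s}; a submatrix of $M_{q,4}$ for any $q\ge2$ contains it. In it, choose six columns, for instance the four singleton partitions $\{\{j\},[1,4]\setminus\{j\}\}$ and two of the three pair partitions, and check directly that the determinant is nonzero.

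Alternatively, a uniform argument covers every $s$ at once, and I would write this one to avoid the eigenvalue computation. Take the columns $R_j=\{\{j\},[1,s]\setminus\{j\}\}$ and $Q_{\{j_1,j_2\}}$. In row $\{a,b\}$, the column $R_j$ has entry $[j\in\{a,b\}]$, and the column $Q_{\{j_1,j_2\}}$ has entry $[j_1\in\{a,b\}]+[j_2\in\{a,b\}]-2[\{a,b\}=\{j_1,j_2\}]$. Therefore
\[
R_{j_1}+R_{j_2}-Q_{\{j_1,j_2\}}=2e_{\{j_1,j_2\}},
\]
where $e_{\{j_1,j_2\}}$ is the standard unit vector of row $\{j_1,j_2\}$. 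So the column space of $M_{q,s}$ contains every standard basis vector of $\mathbb{R}^{\binom{s}{2}}$. This gives rank $\binom{s}{2}$, i.e.\ the rows are independent.

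This identity handles all $s\ge3$. The only care needed is that $R_j$ and $Q_{\{j_1,j_2\}}$ are genuine columns. They are: every partition into two blocks is among the $P_{2,i}$ because $q\ge2$. For small $s$ some of these partitions coincide, for example $R_1=Q_{\{2,3\}}$ when $s=3$, but the identity remains valid regardless. The main obstacle I anticipated was the singular case $s=4$ of the Johnson-graph route, and this column-combination argument sidesteps it.
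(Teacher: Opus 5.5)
Your proof is correct, and the argument you say you would actually write takes a different route from the paper's.

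\textbf{The paper's route.} The paper treats $s\in\{2,3,4\}$ by a computer check that the explicit matrices $M_{2,s}$ (column-submatrices of $M_{q,s}$) have independent rows. For $s\ge5$ it uses exactly your first route: the columns indexed by $\{\{j_1,j_2\},[1,s]\setminus\{j_1,j_2\}\}$ form the adjacency matrix of $J(s,2)$, whose eigenvalues $2(s-2)$, $s-4$, $-2$ are nonzero.

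\textbf{Your uniform route.} You instead adjoin the singleton-type columns $R_j$ and use the identity $R_{j_1}+R_{j_2}-Q_{\{j_1,j_2\}}=2e_{\{j_1,j_2\}}$. This identity checks out entrywise. It is also unaffected when columns coincide, such as $R_3=Q_{\{1,2\}}$ for $s=3$.

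\textbf{Comparison.} Your argument is self-contained and covers all $s\ge3$ at once, with no spectral input, no computer verification and no exceptional case at $s=4$. What you give up is an explicit nonsingular $\binom{s}{2}\times\binom{s}{2}$ submatrix, which the Johnson-graph route supplies for $s\ge5$. That submatrix is not needed, since the lemma only asks for full row rank.

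\textbf{A small inaccuracy.} The partitions $Q_{\{j_1,j_2\}}$ are not distinct for $s=4$, because complementary pairs give the same partition (e.g.\ $Q_{\{1,2\}}=Q_{\{3,4\}}$). This is exactly why the Johnson matrix is singular there. It is harmless, since you treat $s=4$ separately.

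\textbf{Your $s=4$ check.} The suggested $6\times6$ check also works. The unique dependency among the seven columns of $M_{2,4}$ is $R_1+R_2+R_3+R_4=Q_{\{1,2\}}+Q_{\{1,3\}}+Q_{\{1,4\}}$. It involves every column, so deleting any one pair-type column leaves a nonsingular matrix.
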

\begin{proof}
    Observe first that for $s \in \{2,3,4\}$, the rows of the $\binom{s}{2} \times (P_q(s)-1)$ matrices $M_{q,s}$ are linearly independent. Indeed, this follows since it is straightforward to verify (by computer) that the rows of their submatrices $M_{2,s}$, which have been presented in Example~\ref{ExM2s}, are linearly independent. Hence, we may assume that $s \geq 5$. Consider then the columns of $M_{q,s}$ corresponding to the partitions $\{\{j_1, j_2\}, [1,s] \setminus \{j_1, j_2\}\}$, where $j_1$ and $j_2$ are distinct integers of $[1,s]$, and based on these columns, construct an $\binom{s}{2} \times \binom{s}{2}$ submatrix of $M_{q,s}$, denoted by $M'_{q,s}$. In what follows, we show that $M'_{q,s}$ is equal to an adjacency matrix of the Johnson graph $J(s,2)$.
	
	Observe that the adjacency of the distinct vertices $\{j_1,j_2\}$ and $\{j'_1,j'_2\}$ of $J(s,2)$ can be equivalently formulated as follows: $|\{j_1,j_2\} \cap \{j'_1,j'_2\}| = 1$ if and only if the integers $j_1$ and $j_2$ belong to different subsets of the partition $\{\{j'_1, j'_2\}, [1,s] \setminus \{j'_1, j'_2\}\}$. Thus, an adjacency matrix of $J(s,2)$ is equal to $M'_{q,s}$. By~\cite[Theorem~9.1.2]{BrouweretalDRG}, the eigenvalues of $M'_{q,s}$ are $\lambda_0 = 2(s-2)$, $\lambda_1 = s-4$ and $\lambda_2 = -2$. Therefore, as $s > 4$, the eigenvalues are non-zero implying that so is the determinant of $M'_{q,s}$. This implies that the rows of $M_{q,s}$ are linearly independent. Thus, the claim follows.
\end{proof}

Let $\alpha$ be an integer such that $0 \leq \alpha \leq \binom{s}{2}$. Assume that $\alpha$ of the pairwise distances $d(\bc, \bc')$ are known and denote the corresponding system of linear equations by $\mathcal{L'}$. As the linear equations of $\mathcal{L}$ are linearly independent by the previous lemma, then so are the ones of $\mathcal{L'}$. Hence, the number of free variables in its solution is equal to 
\[
P_q(s) - 1 - \alpha \text.
\]
Furthermore, if the values $p_{k,i}$ other than $p_{1,1}$ are known, then we have
\[
p_{1,1} = n -  \sum_{k=2}^{\min\{q,s\}} \sum_{i=1}^{\cS(s,k)} p_{k,i} \text.
\]
By considering the free variables as supplementary parameters alongside the $\alpha$ known pairwise distances, we can immediately derive the following theorem.
\begin{theorem}
	Let $S = \{\bc^1, \bc^2, \dots, \bc^s\}$ be a subset of $\Z_q^n$ and $T = (t_1, t_2, \ldots, t_s)$ a list of nonnegative integers. If we know $\alpha$ of the pairwise distances $d(\bc^j, \bc^{j'})$ as well as $P_q(s) - 1 - \alpha$ of the values $p_{k,i}$, then all the values $p_{k,i}$ can be determined and the exact size of the intersection $I_T(S)$ follows by Theorem~\ref{Thm_intersection_size}.
\end{theorem}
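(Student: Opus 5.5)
The plan is to reduce the statement to linear algebra on the system $\mathcal{L}$, that is, $M_{q,s}\cdot\bp=\bd$, and then invoke Theorem~\ref{Thm_intersection_size}.

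First, by the preceding lemma the $\binom{s}{2}$ rows of $M_{q,s}$ are linearly independent. Hence any $\alpha$ of them, which form the subsystem $\mathcal{L}'$ for the known distances, are also independent, and the $\alpha\times(P_q(s)-1)$ coefficient matrix of $\mathcal{L}'$ has rank $\alpha$. Since its column rank is also $\alpha$, I can choose $\alpha$ columns of this matrix that form an invertible $\alpha\times\alpha$ submatrix $A$. I will call the corresponding variables $p_{k,i}$ pivot variables. The remaining $P_q(s)-1-\alpha$ variables are the free ones, and the known values $p_{k,i}$ in the hypothesis are to be understood as exactly these free variables.

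Second, I move the free variables to the right-hand side. This gives $A\cdot\bp_{\mathrm{piv}}=\bd'-B\cdot\bp_{\mathrm{free}}$, where $\bd'$ consists of the $\alpha$ known distances and $B$ is the remaining column block. Every quantity on the right is known, and $A$ is invertible, so the pivot variables are uniquely determined. Third, I recover $p_{1,1}=n-\sum_{k\ge2}\sum_i p_{k,i}$. At this point all values $p_{k,i}$ are known.

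Finally, I apply Theorem~\ref{Thm_intersection_size}. Its formula depends only on $n$, $q$, $s$, $T$, the values $p_{k,i}$, and the functions $f_{k,i}$. The functions $f_{k,i}$ depend only on the fixed partitions $P_{k,i}$, not on $S$, so $|I_T(S)|$ is determined.

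The main obstacle is interpretive rather than technical. The statement says ``$P_q(s)-1-\alpha$ of the values $p_{k,i}$'', but an arbitrary choice of that many variables need not be complementary to an invertible set of columns. So I must state that the known values are the free variables of $\mathcal{L}'$, which always exist by the rank argument. With that reading, the remaining steps are routine.
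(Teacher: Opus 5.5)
Your proof is correct and follows the paper's own argument. The preceding lemma makes the rows of $M_{q,s}$, and hence the $\alpha$ rows forming $\mathcal{L}'$, linearly independent, so $\mathcal{L}'$ has exactly $P_q(s)-1-\alpha$ free variables, which the paper (like you) takes as the supplementary known values; the remaining $p_{k,i}$ with $k\geq 2$ are then determined, $p_{1,1}$ follows from $\sum_{k,i}p_{k,i}=n$, and Theorem~\ref{Thm_intersection_size} gives $|I_T(S)|$. Your explicit invertible-submatrix step, and your remark that the known $p_{k,i}$ must be complementary to an invertible set of columns, only make precise what the paper leaves implicit in the phrase ``considering the free variables as supplementary parameters.''
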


In the discussions after Example~\ref{Ex_1auxparameter}, we claimed that if $q=2$ and $|S| = 4$ or $q \geq 3$ and $|S| = 3$, then one additional parameter alongside the pairwise distances is enough. This follows due to the previous theorem since
\[
P_2(4) - 1 - \binom{4}{2} = \sum_{k=1}^{2} \cS(4,k) - 1 - 6 = 8 - 1 - 6 = 1
\]
and, for $q \geq 3$,
\[
P_q(3) - 1 - \binom{3}{2} = \sum_{k=1}^{3} \cS(3,k) - 4 = 5 - 4 = 1 \text.
\]
In Subsection~\ref{sec3balls}, the case with $q \geq 3$ and $S = \{\bc^0, \bc^1, \bc^2\}$ will be further studied. As we have seen above, in this case, we require a parameter in addition to the pairwise distances between the words in $S$. In Subsection~\ref{sec3balls}, with the assumption of $\bc^0$ being the all-zero word, we consider the additional parameter $A = |\supp(\bc^1)\cap \supp(\bc^2)|$, i.e., $A$ denotes the number of coordinate positions, where both $\bc^1$ and $\bc^2$ differ from $\bc^0$. Furthermore, if the partitions $P_{1,1}$, $P_{2,1}$, $P_{2,2}$, $
P_{2,3}$ and $P_{3,1}$ are as in Example~\ref{Ex_notation}, then $\Pa_{2,3} \cup \Pa_{3,1} = \supp(\bc^1)\cap \supp(\bc^2)$ and $A = p_{2,3} + p_{3,1}$.

In the following remark, we briefly discuss the asymptotic behavior of $P_q(s) - 1 - \binom{s}{2}$.
\begin{remark} \label{Rmk_asymptotics}
    In order to determine the asymptotic behavior of $P_q(s) - 1 - \binom{s}{2}$ as $s$ grows, we first state by~\cite[Section~3.5~(p.~191)]{DezaSN} that
    \[
    \cS(s,k) \sim \frac{e^s}{\sqrt{2\pi k}} \text,
    \]
     that is, $\cS(s,k)/(\frac{e^s}{\sqrt{2\pi k}})\overset{s\to\infty}{\rightarrow}1$, where $k$ is assumed to be a constant. 
    Therefore, when $q$ is a constant and $s$ grows, we further obtain 
    \[
    P_q(s)=\sum_{k=1}^q\cS(s,k) \sim\sum_{k=1}^q\frac{e^s}{\sqrt{2\pi k}}\in \Theta(e^s)
    \]
    Moreover, we have $\binom{s}{2}  \sim \frac{s^2}{2}$.
    Hence, the requirement for additional parameters grows rapidly with respect to $s$. Recall by Theorem~\ref{Thm_intersection_size} that knowing all the $P_q(s) \in \Theta(e^s)$ values of $p_{k,i}$ is crucial in computing the cardinality of $I_t(S)$. However, it should be noted that at most $n$ of the values $p_{k,i}$ can be non-zero due to the fact that $\sum_{k,i} p_{k,i}=n$. This also restricts the number of non-zero $j_{k,i}^l$'s in Theorem~\ref{Thm_intersection_size}.

    Assume then that $s$ is a constant and $q$ increases. While $q < s$, increasing $q$ also makes $P_q(s)$ larger; namely, for $2 < q < s$, we have $P_{q-1}(s) < P_q(s)$. However, if $q \geq s$, then $P_{q}(s) = P_s(s)$ stays constant. Therefore, we have $P_q(s)-1-\binom{s}{2}\in\Theta(1)$.  
\end{remark}

The main focus of the section has been to study the theoretical and structural aspects of the intersection $I_T(S)$ and its cardinality. However, in the following remark, we briefly discuss the computational complexity of applying Theorem~\ref{Thm_intersection_size} and compare it to the complexity of a `brute-force' style algorithm, when the size $q$ of the alphabet and the number $s$ of the balls are bounded by constants; such assumptions are rather natural concerning applications such as DNA-storages.

\begin{remark} \label{RmkComplexity}
    Let $q$ and $s$ be constants. For computing the formula of Theorem~\ref{Thm_intersection_size}, we first observe by~\eqref{Eq_Pqs} that  $P_q(s)$ is constant. Therefore, the number of parameters $j_{k,i}^l$ is upper bounded by a constant $qP_q(s)$. For efficiently determining the set $\mathcal{J}$ of tuples formed by $j_{k,i}^l$, we denote $t_{\min} = \min{T}$ and the corresponding center by $\bc^{\min}$. Note that the length of any tuple in $\mathcal{J}$ is determined by the number of parameters $j_{k,i}^l$ and is at most $qP_q(s)$. As each $j_{k,i}^l\leq n$, any tuple in $\mathcal{J}$ can be presented using $\Ordo(\log_2 n)$ bits. Related to Theorem~\ref{Thm_intersection_size}, consider then the condition~(ii)  for $t_{\min}$ and, by substituting the condition~(i) $p_{k,i} = \sum_{l=1}^{\min\{q,k+1\}} j_{k,i}^l$ for all pairs of $k$ and $i$, obtain the inequality 
    \begin{equation} \label{EqRmkInequality}
        \sum_{k=1}^{\min\{q,s\}} \sum_{i=1}^{\cS(s,k)} \sum_{\substack{l=1\\ l\neq f_{k,i}(\bc^{\min})}}^{\min\{q,k+1\}}j_{k,i}^l \leq t_{\min} \text.
    \end{equation}
    Denote the tuples of $j_{k,i}^l$ satisfying this inequality by $\mathcal{J'}$. Observe that $\mathcal{J} \subseteq \mathcal{J'}$ since the values $f_{k,i}(\bc^{\min})$ (not involved in the inequality) are fixed by the condition~(i). By a rather standard `stars-and-bars' argument (for example, see \cite[Section~1.5]{levin2021discrete}), we obtain that
    \[
    |\mathcal{J'}| = \binom{t_{\min}+\beta}{\beta} \leq \binom{t_{\min}+(q-1)P_q(s)}{(q-1)P_q(s)} \in \Ordo\left(t_{\min}^{(q-1)P_q(s)}\right)\text,
    \]
    where $\beta$ stands for the number of parameters $j_{k,i}^l$ appearing in Inequality~\eqref{EqRmkInequality}. It is also rather immediate that the tuples of $\mathcal{J'}$ can be listed using $\Ordo\left(t_{\min}^{(q-1)P_q(s)} \cdot \log_2{n}\right)$ time since each tuple in $\mathcal{J}$ can be presented using $\Ordo(\log_2 n)$ bits. In order to determine which tuples of $\mathcal{J'}$ actually belong to $\mathcal{J}$, we need to check which of them satisfy the inequalities in (ii) for the other values of $t_m$. Notice first that the left-hand side of each inequality is at most $n$ since $\sum_{k=1}^{\min\{q,s\}} \sum_{i=1}^{\cS(s,k)} p_{k,i} = n$. Therefore, calculating the left-hand side requires approximately $P_q(s)$ additions and subtractions of integers with $\Ordo(\log_2{n})$ bits. Hence, due to the assumption $t_m \leq n$ (made in the introduction), the time complexity of checking each of the $s-1$ other inequalities is $\Ordo(\log_2{n})$. Thus, based on $\mathcal{J'}$, the set $\mathcal{J}$ can be computed in $\Ordo\left(t_{\min}^{(q-1)P_q(s)} \cdot \log_2{n}\right)$ time. In conclusion, the total time complexity of determining $\mathcal{J}$ is $\Ordo\left(t_{\min}^{(q-1)P_q(s)} \cdot \log_2{n}\right)$.
    
    After $\mathcal{J}$ has been determined, for \eqref{Eq2ndFormula} and \eqref{Eq1stFormula}, we need to calculate $P_q(s)$ multinomial coefficients and powers $(q-k)^{j_{k,i}^{k+1}}$ for each tuple of $\mathcal{J}$. Each multinomial coefficient 
    \[
    \binom{p_{k,i}}{j_{k,i}^1, j_{k,i}^2, \dots, j_{k,i}^{k+1}} = \frac{p_{k,i}!}{j_{k,i}^1! \cdot j_{k,i}^2!  \cdots  j_{k,i}^{k+1}!}
    \]    
    can be calculated in polynomial time with respect to $n$. Indeed, by~\cite{araujo2021multicoefficient}, the coefficient can be computed using $\Ordo(p_{k,i}) = \Ordo(n)$ multiplications (and divisions). Furthermore, each multiplication (and division) involves $\Ordo(n\log_2{n})$-bit integers since by Stirling's formula, $n!$ can be represented using $b = \log_2(n!) = n\log_2{n} - n\log_2{e} + \Ordo(\log_2{n}) = \Ordo(n\log_2{n})$ bits. Furthermore, the multiplication (and division) of two $b$-bit integers can be performed in $\Ordo(b^2)$ time using schoolbook algorithms, and there also exist more sophisticated algorithms working, for example, in time $\Ordo(b\log_2{b})$ (\cite{harvey2021multiplication}), which are assumed to be used in the following discussions. Thus, the multinomial coefficient can be calculated using $\Ordo(n^2(\log_2{n})^2)$ bit operations. Furthermore, each power $(q-k)^{j_{k,i}^{k+1}} \leq q^{j_{k,i}^{k+1}}$ can be computed using $\Ordo(\log_2{n})$ multiplications based on the exponentiation by squaring, where the multiplied integers have at most $\Ordo(n)$ bits (as $q^{j_{k,i}^{k+1}} \leq q^n = |\Z_q^n|$). Hence, the power can be computed in $\Ordo(n(\log_2{n})^2)$ time. Thus, for each tuple of $\mathcal{J}$, the products in~\eqref{Eq2ndFormula} and \eqref{Eq1stFormula} can be computed using at most $2P_q(s)$ multiplications of integers, which can be represented using $\Ordo(\log_2{q^n}) = \Ordo(n)$ bits, and the sums using approximately $P_q(s)$ additions of $\Ordo(n)$-bit integers. Therefore, the time complexity of computing $|I_T(S)|$ is $\Ordo\left(t_{\min}^{(q-1)P_q(s)} \cdot n^2(\log_2{n})^2\right)$.

    Consider next the brute-force method, where we first compute the intersection $I_T(S) \subseteq \Z_q^n$ and then calculate its size. For this purpose, assume for simplicity that $t_{\min} < n/2$. In this approach, we first construct the ball $B_{t_{\min}}(\bc^{\min})$, which by the previous assumption is a smallest one, and then check whether its words belong to the other balls. The cardinality of the ball $B_{t_{m}}(\bc^m)$ satisfies
    \begin{equation} \label{Eq_Complexity}
    |B_{t_{\min}}(\bc^{\min})| 
    = \sum_{k=0}^{t_{\min}} \binom{n}{k}(q-1)^{k} \geq \binom{n}{t_{\min}}(q-1)^{t_{\min}} \geq \left( \frac{n}{t_{\min}} \right)^{t_{\min}}(q-1)^{t_{\min}}\text. 
    \end{equation}
    Therefore, the construction of $B_{t_{\min}}(\bc^{\min})$ takes at least $n(n(q-1)/t_{\min})^{t_{\min}}$ bit operations. Let us divide into the following cases based on the size of $t_{\min}$:
    \begin{itemize}
        \item If $t_{\min}$ is a constant, then $\mathcal{J}$ can be computed in $\Ordo(\log_2{n})$ time and, hence, calculating the formula of Theorem~\ref{Thm_intersection_size} takes $\Ordo\left(n^2(\log{n})^2\right)$ time. In comparison, the time complexity of the brute-force method is at least $n(n(q-1)/t_{\min})^{t_{\min}} \in \Omega(n^{t_{\min}+1})$. Thus, in general, computing $|I_T(S)|$ using the formula of Theorem~\ref{Thm_intersection_size} is faster than the brute-force method.

        \item 
        If $t_{\min}$ depends linearly on $n$, say $t_{\min}=\tau n$
        for some fixed positive real number $\tau<1/2$, then the complexity
        \[
        n\left( \frac{n}{t_{\min}} \right)^{t_{\min}}(q-1)^{t_{\min}} \geq \left( \frac{q-1}{\tau} \right)^{\tau n}
        \]
        of the brute-force method is exponential on $n$. In comparison, the computation of the formula can be performed in polynomial time $\Ordo\left((\tau n)^{(q-1)P_q(s)} \cdot n^2(\log{n})^2\right) = \Ordo\left(n^{2+ (q-1)P_q(s)}(\log{n})^2\right)$.
    \end{itemize}
    Thus, in both cases, Theorem~\ref{Thm_intersection_size} leads to a faster algorithm than the brute-force method.
\end{remark}

\section{Asymptotic Results for the Intersections}\label{SecIntersection}

We begin the section by introducing some additional definitions and notations. 
For a set $S\subseteq \Z^n_q$ and a word $\bv\in \Z^n_q$, the smallest integer $r\geq0$ such that $S \subseteq B_r(\bv)$ is denoted by $d_M(\bv,S)$. In other words, we have $d_M(\bv,S)=\max\{d(\bv,\bs)\mid \bs\in S\}$. Observe that $\bv \in I_t(S)$ if and only if $d_M(\bv,S) \leq t$.
We further denote by $C_d(S)\subseteq \Z^n_q$ the set of words whose maximum distance to words in $S$ is exactly $d$, that is, $C_d(S)=\{\bv\in \Z^n_q\mid d_M(\bv,S)=d\}$. 
Furthermore, we call $d(S) = \min\{d_M(\bx, S) \mid \bx \in \F^n_q\}$ the \textit{radius of the set} $S$. Moreover, the set $C(S)$ of \emph{central words} consists of the words $\bv \in \Z^n_q$ such that $d(S) = d_M(\bv,S)$, that is, 
$$C(S)=\{\bv\in \Z_q^n\mid d_M(\bv,S)= d(S)\}.$$ In the following lemma, we determine (asymptotically with respect to $n$) the main term of the cardinality $|I_t(S)|$. 

\begin{lemma}\label{lemAsymptoticDistCent}
Let $c,t$ and $q$ be fixed constants.
Given a nonempty set $S\subseteq \F^n_q$ with at most $c$ words 
such that $t \geq d(S)$, then $$|I_t(S)|=|C(S)|(q-1)^{t-d(S)}\binom{n}{t-d(S)} +E(n,t,d(S))$$ where $|E(n,t,d(S))|\in\mathcal{O}(n^{t-d(S)-1})$ and $|C(S)|\in \mathcal{O}(1)$.
%
\end{lemma}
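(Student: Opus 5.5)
The plan is to reduce everything to a set of boundedly many ``active'' coordinates, and to treat the remaining coordinates as a Hamming ball around a common zero pattern.

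\textbf{Bounding the active coordinates.} Let $D\subseteq[1,n]$ be the set of positions $h$ where the words of $S$ do not all agree. In the language of Section~\ref{SecFormula}, $D$ is the complement of $\Pa_{1,1}$. Since $t\ge d(S)$, there is a word $\bx$ with $d(\bx,\bs)\le d(S)\le t$ for every $\bs\in S$. Hence every pairwise distance in $S$ is at most $2t$, by the triangle inequality. Every $h\in D$ lies in $\supp(\bs-\bs')$ for some pair $\bs,\bs'\in S$, so
\[
|D|\le \binom{c}{2}\cdot 2t,
\]
which is a constant independent of $n$. This is the one step where the hypotheses $t\ge d(S)$ and $|S|\le c$ are genuinely used, and I expect it to be the crux; everything afterwards is bookkeeping. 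Applying a coordinatewise symbol permutation, which preserves distances, we may assume that all words of $S$ are $0$ outside $D$.

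\textbf{Splitting the distance.} Write each $\bv\in\Z_q^n$ as $(\bu,\bw)$ with $\bu\in\Z_q^{D}$ and $\bw\in\Z_q^{n-|D|}$. For every $\bs\in S$ we have $d(\bv,\bs)=d(\bu,\bs_D)+w(\bw)$, so
\[
d_M(\bv,S)=d_M(\bu,S_D)+w(\bw),
\]
where $S_D$ denotes the restrictions of the words of $S$ to $D$. Minimizing over $\bv$ (take $\bw=\zero$) shows $d(S)=d(S_D)$. It also shows that $\bv\in C(S)$ if and only if $\bw=\zero$ and $\bu\in C(S_D)$. Consequently $|C(S)|=|C(S_D)|\le q^{|D|}\in\mathcal{O}(1)$.

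\textbf{Counting.} For a fixed $\bu$, the admissible $\bw$ are exactly those of weight at most $t-d_M(\bu,S_D)$. Therefore
\[
|I_t(S)|=\sum_{\bu\in\Z_q^{D}} V_q\bigl(n-|D|,\,t-d_M(\bu,S_D)\bigr),
\]
with the convention $V_q(\cdot,r)=0$ for $r<0$. I would split this sum into two groups of $\bu$.

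\emph{Central $\bu$.} The words $\bu\in C(S_D)$ contribute
\[
|C(S)|\,V_q\bigl(n-|D|,t-d(S)\bigr).
\]
Since $|D|$ and $t$ are constants, we have $\binom{n-|D|}{k}=\binom{n}{k}+\mathcal{O}(n^{k-1})$, and the lower-order terms of $V_q$ are $\mathcal{O}(n^{k-1})$. Hence this contribution equals $|C(S)|(q-1)^{t-d(S)}\binom{n}{t-d(S)}+\mathcal{O}(n^{t-d(S)-1})$.

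\emph{Non-central $\bu$.} Every other $\bu$ has $d_M(\bu,S_D)\ge d(S)+1$, so it contributes at most $V_q(n,t-d(S)-1)\in\mathcal{O}(n^{t-d(S)-1})$. There are at most $q^{|D|}=\mathcal{O}(1)$ such $\bu$, so their total is also $\mathcal{O}(n^{t-d(S)-1})$.

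Adding the two groups gives the claimed form of $|I_t(S)|$ with $|E(n,t,d(S))|\in\mathcal{O}(n^{t-d(S)-1})$. In the edge case $t=d(S)$ the error term is in fact exactly $0$: the non-central terms vanish, and $V_q(n-|D|,0)=1=\binom{n}{0}$.
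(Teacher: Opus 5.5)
Your proposal is correct and takes essentially the same route as the paper: the same set $D$ of disagreement positions (bounded using pairwise distances at most $2t$), the same additive splitting $d_M(\bv,S)=d_M(\bv^1,S)+w(\bv^2)$, and a main term from central inner parts plus an $\mathcal{O}(n^{t-d(S)-1})$ remainder from the boundedly many other inner parts. The only cosmetic difference is the grouping: you split by whether the inner part is central and sum $V_q(n-|D|,\cdot)$, whereas the paper splits by whether the outer weight equals $t-d(S)$ exactly and compares $\binom{n-|D|}{t-d(S)}$ with $\binom{n}{t-d(S)}$ through explicit ratio bounds.
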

\begin{proof}
Let us fix constants $c,t$ and $q$ and let $S=\{\bc^1,\dots,\bc^{c'}\}$, for $c'\leq c$, where $d(S)\leq t$. Due to $d(S)\leq t$, the intersection $I_t(S)$ is nonempty. Thus, as $t$ is a constant, the maximum distance $d$ between words in $S$ is also upper bounded by some constant; namely, by $2t$. 
Let $D \subseteq [1,n]$ denote the coordinate positions in which some of the words of $S$ differ (pairwise); in other words, all the words of $S$ agree on the coordinates $[1,n] \setminus D$. Since a pair of words in $S$ differs in at most $d$ coordinate positions and there are $\binom{c'}{2}$ word pairs, we have $|D| \leq d\binom{c'}{2}$. Thus, $|D|$ is bounded from above by a constant.

Let $\bv$ be an arbitrary word of $\Z^n_q$. We may write the word as a sum $\bv = \bv^1 + \bv^2$, where $\bv^1, \bv^2 \in \Z^n_q$ are words such that 
\begin{itemize}
    \item the coordinates of $\bv^1$ have the same value as all the words $\bc^j$ outside of $D$, that is, $v_i^1=c_i^j$ for each $i\in [1,n]\setminus D$ and $j\in[1,c']$, while the symbols within the coordinate positions in $D$ are unrestricted, and 
    \item each nonzero symbol of $\bv^2$ (if any) is outside the coordinates of $D$, that is, $\supp(\bv^2)\subseteq [1,n]\setminus D$.
\end{itemize}
Observe that we have $d_M(\bv,S)=d_M(\bv^1,S)+w(\bv^2)$. 
Hence, we have $\bv\in I_t(S)$ if and only if $w(\bv^2)\leq t-d_M(\bv^1,S)$. This further implies that $w(\bv^2)\leq t-d(S)$. In the following, we split our considerations into the cases with $w(\bv^2)=t-d(S)$ and $w(\bv^2)\leq t-d(S)-1$. We will  show that the main term $|C(S)|(q-1)^{t-d(S)}\binom{n}{t-d(S)}$ follows from the first case and that the error term $E(n,t,d(S))$ contains the cases with $w(\bv^2)\leq t-d(S)-1$ together with the possible overflow from the first case.

Let us first consider the case with the largest possible weight $w(\bv^2)=t-d(S)=t'$ of $\bv^2$. It is obtained if and only if
 we have $\bv^1\in C(S)$. Furthermore, there exist exactly \begin{equation*}
|C(S)|(q-1)^{t'}\binom{n-|D|}{t'}=|C(S)|(q-1)^{t'}\binom{n}{t'}\frac{(n-|D|)!(n-t')!}{n!(n-|D|-t')!}
\end{equation*}
 such words $\bv$. We may estimate 
$\frac{(n-|D|)!(n-t')!}{n!(n-|D|-t')!}\leq \frac{(n-|D|)^{t'}}{(n-t')^{t'}}=\left(\frac{n-|D|}{n-t'}\right)^{t'}=\left(1+\frac{t'-|D|}{n-t'}\right)^{t'}\leq\left(1+\frac{2t'-|D|}{n}\right)^{t'} \leq 1+t'\frac{2t'-|D|}{n}+\frac{p'}{n^2}\leq 1+\frac{p}{n}$
for some constants $p'$ and $p\geq0$ when $n$ is large enough. Indeed, the second inequality is due to the mediant inequality and the third inequality follows from the binomial expansion. Moreover, we have 
$\frac{(n-|D|)!(n-t')!}{n!(n-|D|-t')!}\geq \frac{(n-t'-|D|)^{t'}}{n^{t'}}=\left(1-\frac{t'+|D|}{n}\right)^{t'}\geq 1-t'\frac{t'+|D|}{n}\geq 1- \frac{b}{n}$ 
by Bernoulli's inequality for some constant $b\geq0$ and sufficiently large $n$  (ensuring that $(t'+ |D|)/n \leq 1$). Therefore, 
$$|C(S)|(q-1)^{t'}\binom{n}{t'}\left(1-\frac{b}{n}\right)\leq|C(S)|(q-1)^{t'}\binom{n-|D|}{t'}\leq |C(S)|(q-1)^{t'}\binom{n}{t'}\left(1+\frac{p}{n}\right).$$ 
We note that since $\binom{n}{t'}\in \mathcal{O}(n^{t'})$, we have $\frac{b}{n}\binom{n}{t'}\in \mathcal{O}(n^{t'-1})$ and the same is true for $\frac{p}{n}\binom{n}{t'}$. Hence, both $\frac{b}{n}\binom{n}{t'}$ and $\frac{p}{n}\binom{n}{t'}$ are absorbed into the error term $E(n,t,d(S))$.

Let us then consider the words $\bv$ with $w(\bv^2)\leq t'-1$. There are at most $V_q(n,t'-1)$ such words $\bv$  in $I_t(S)$ for each $\bv^1$. Since there are at most $q^{d\binom{c'}{2}}$ ways to choose the word $\bv^1$, there are $\mathcal{O}(n^{t-d(S)-1})$ words $\bv$ with $w(\bv^2)\leq t'-1=t-d(S)-1$ in $I_t(S)$. Hence, in total, we have $|I_t(S)|=|C(S)|(q-1)^{t-d(S)}\binom{n}{t-d(S)}+E(n,t,d(S))$ where $|E(n,t,d(S))|\in\mathcal{O}(n^{t-d(S)-1})$. Finally, in order to determine the cardinality of $C(S)$, we observe that if $\bv\in C(S)$, then $\bv^2=\zero$ since $d_M(\bv,S)=d_M(\bv^1,S)+w(\bv^2)$ and $\bv = \bv^1$. Hence, as the number of ways to choose $\bv^1$ is bounded from above by a constant, we have $|C(S)|\in \mathcal{O}(1).$
\end{proof}

As a corollary of the previous result, we obtain the following lemma determining the asymptotic size of $I_t(S)$ (with respect to $n$).
\begin{lemma}\label{lemAsymptoticDist}
Let us fix constants $c,t$ and $q$.
Given a nonempty set $S\subseteq \F^n_q$ with at most $c$ words 
such that $t \geq d(S)$, then $|I_t(S)|\in \Theta(n^{t-d(S)})$.
%
\end{lemma}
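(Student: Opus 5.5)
This follows directly from Lemma~\ref{lemAsymptoticDistCent}. That lemma gives
\[
|I_t(S)|=|C(S)|(q-1)^{t-d(S)}\binom{n}{t-d(S)}+E(n,t,d(S)),\qquad |E(n,t,d(S))|\in\Ordo(n^{t-d(S)-1}),\qquad |C(S)|\in\Ordo(1).
\]
Write $t'=t-d(S)$. Since $t$ is constant and $0\le d(S)\le t$, the exponent $t'$ is a constant in $[0,t]$.

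The plan is to show that the main term is $\Theta(n^{t'})$ and that the error term is dominated by it. First, $C(S)$ is nonempty by definition: $d(S)$ is a minimum over the finite set $\Z_q^n$, so it is attained. Hence $1\le |C(S)|\le K$ for a constant $K$; the upper bound is the statement $|C(S)|\in\Ordo(1)$ of Lemma~\ref{lemAsymptoticDistCent}. Next, $(q-1)^{t'}$ is a positive constant because $q\ge2$. Finally, for constant $t'$ and $n\ge 2t'$ we have $(n/t')^{t'}\le\binom{n}{t'}\le n^{t'}$ (with $\binom{n}{0}=1$ when $t'=0$), so $\binom{n}{t'}\in\Theta(n^{t'})$. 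Together these give main term $\in\Theta(n^{t'})$, with implied constants independent of $S$.

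For the upper bound, add the error: $|I_t(S)|\le K(q-1)^{t'}n^{t'}+\Ordo(n^{t'-1})\in\Ordo(n^{t'})$. For the lower bound with $t'\ge1$, we get $|I_t(S)|\ge (q-1)^{t'}(n/t')^{t'}-\Ordo(n^{t'-1})\ge \tfrac12 (q-1)^{t'}(n/t')^{t'}$ for sufficiently large $n$.

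The one point that needs care is the case $t'=0$. There the bound $\Ordo(n^{-1})$ on the error is awkward, but no asymptotics are needed: $I_t(S)\supseteq C(S)\neq\emptyset$ gives $|I_t(S)|\ge1$. For the matching upper bound, the decomposition $\bv=\bv^1+\bv^2$ in the proof of Lemma~\ref{lemAsymptoticDistCent} forces $w(\bv^2)\le t'=0$, so every $\bv\in I_t(S)$ is determined by its restriction to $D$. That leaves at most $q^{|D|}=\Ordo(1)$ choices, so $|I_t(S)|\in\Theta(1)=\Theta(n^0)$.

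I would also note explicitly that the constants in Lemma~\ref{lemAsymptoticDistCent} depend only on $c$, $t$ and $q$ (via the bound $|D|\le 2t\binom{c}{2}$). This makes the $\Theta$ uniform over all admissible $S$.
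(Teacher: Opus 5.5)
Your proof is correct and follows the same route as the paper, which also obtains the statement directly from Lemma~\ref{lemAsymptoticDistCent} by bounding the main term and absorbing the error term. Your version is more complete: the paper's proof only writes out the $\mathcal{O}(n^{t-d(S)})$ upper bound, whereas you also give the matching lower bound via $C(S)\neq\emptyset$ and $\binom{n}{t'}\geq (n/t')^{t'}$, and you treat $t'=0$ separately (there Lemma~\ref{cort=dS}, which gives $I_t(S)=C(S)$, yields the same conclusion at once).
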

\begin{proof}
    By Lemma~\ref{lemAsymptoticDistCent}, we have $|I_t(S)|=|C(S)|(q-1)^{t-d(S)}\binom{n}{t-d(S)} +E(n,t,d(S))$ where $|E(n,t,d(S))|\in\mathcal{O}(n^{t-d(S)-1})$ and $|C(S)|\in \mathcal{O}(1)$. Since $t\geq d(S)$ and $t$ is a constant, we have  $\binom{n}{t-d(S)}\in \mathcal{O}(n^{t-d(S)})$. Hence, $|I_t(S)|=|C(S)|(q-1)^{t-d(S)}\binom{n}{t-d(S)} +E(n,t,d(S))\in \mathcal{O}(|C(S)|(q-1)^{t-d(S)}n^{t-d(S)}+E(n,t,d(S))=\mathcal{O}(n^{t-d(S)})$.
\end{proof}

When $t=d(S)$, we obtain the following simplification of Lemma~\ref{lemAsymptoticDistCent}. Observe that the following lemma allows us to compute the exact number of central words with Theorem~\ref{Thm_intersection_size}.\begin{lemma}\label{cort=dS}
Given a nonempty set $S\subseteq \F^n_q$ 
such that $t = d(S)$, then $$I_t(S)=C(S).$$
\end{lemma}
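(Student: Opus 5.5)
The plan is to prove the equality $I_t(S)=C(S)$ directly from the definitions, as a double inclusion. The only tools needed are the equivalence $\bv\in I_t(S)\iff d_M(\bv,S)\le t$, observed right after the definition of $d_M$, and the minimality in the definition of the radius $d(S)$. No asymptotics are needed, and Lemma~\ref{lemAsymptoticDistCent} does not have to be invoked.

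For the inclusion $C(S)\subseteq I_t(S)$, take $\bv\in C(S)$. By definition, $d_M(\bv,S)=d(S)=t$, so $d(\bv,\bs)\le t$ for every $\bs\in S$. Hence $\bv$ lies in every ball $B_t(\bs)$, that is, $\bv\in I_t(S)$.

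For the inclusion $I_t(S)\subseteq C(S)$, take $\bv\in I_t(S)$. Then $d_M(\bv,S)\le t=d(S)$. On the other hand, $d(S)$ is the minimum of $d_M(\bx,S)$ over all $\bx\in\Z_q^n$, so $d_M(\bv,S)\ge d(S)$. Combining the two gives $d_M(\bv,S)=d(S)$, which is exactly $\bv\in C(S)$.

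Neither step is a genuine obstacle. The only point needing care is that $S$ is nonempty, so that $d_M$ (a maximum over $S$) and hence $d(S)$ are well defined; this is part of the hypothesis. Finally, I would remark that, combined with Theorem~\ref{Thm_intersection_size}, this gives an exact count $|C(S)|=|I_{d(S)}(S)|$, as noted before the statement.
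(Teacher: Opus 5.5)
Your proof is correct and follows essentially the same double-inclusion argument as the paper. The one difference is that you state explicitly the minimality step $d_M(\bv,S)\ge d(S)$ for the inclusion $I_t(S)\subseteq C(S)$, which the paper leaves implicit.
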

\begin{proof}
Let $\bv\in I_t(S)$. Then we have $d_M(\bv,S)\leq t$. Hence, $I_t(S)\subseteq C(S)$. Moreover, we have $C(S)\subseteq I_t(S)$ since each vertex in $C(S)$ has distance of at most $d(S)=t$ to words in $S$.
\end{proof}

As Lemma \ref{lemAsymptoticDist} suggests, given a sufficiently large $n$, to find a set of words $S\subseteq \F_q^n$ for which the intersection $|I_t(S)|$ achieves its maximum value, we are interested in finding the radius $d(S)$ of the set $S$ and the central words $C(S)$. Notice that in the following lemma, if $|S|=1$, then we consider the pairwise distance of words in $S$ to be $0$.

\begin{lemma}\label{lemMindS}
Let us fix constants $c$ and $d$ and consider a nonempty set $S\subseteq \F^n_q$ with at most $c$ words with pairwise distances exactly equal to $d$.
\begin{enumerate}
\item If $n\geq d$, then $d(S)\geq\lceil d/2\rceil$. 

\item If $n \geq 1 + (c+1) \lfloor d/2 \rfloor$ and we have $q\geq c$ or $d$ is even, then there exists a set $S$ attaining the  bound in 1).
\end{enumerate}
\end{lemma}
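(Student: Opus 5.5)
Part 1 is the triangle inequality. If $|S|=1$, then $d=0$ and the bound is trivial. Otherwise pick distinct $\bc^1,\bc^2\in S$ with $d(\bc^1,\bc^2)=d$. For any $\bx\in\Z_q^n$,
\[
d=d(\bc^1,\bc^2)\le d(\bx,\bc^1)+d(\bx,\bc^2)\le 2\,d_M(\bx,S),
\]
so $d_M(\bx,S)\ge d/2$. Since distances are integers, $d_M(\bx,S)\ge\lceil d/2\rceil$, and taking the minimum over $\bx$ gives $d(S)\ge\lceil d/2\rceil$. The hypothesis $n\ge d$ only guarantees that such a configuration can exist at all.

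For part 2, I would build $S$ explicitly with the all-zero word $\zero$ as the central word, so that every word of $S$ has weight at most $\lceil d/2\rceil$ and the pairwise distances are exactly $d$. Write $k=\lfloor d/2\rfloor$ and give every codeword its own private support block of $k$ coordinates. With $c$ words this uses $ck$ coordinates, plus possibly one shared coordinate for the odd case, plus one spare coordinate. That accounts for the bound $n\ge 1+(c+1)k$, which gives enough room.

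If $d=2k$ is even, let $\bc^j$ be the word with $1$'s on the $j$-th block of $k$ coordinates ($j\in[1,c]$) and zeros elsewhere; this needs $ck\le n$ coordinates. Distinct words have disjoint supports of size $k$, so their distance is $2k=d$. Each word has weight $k=d/2$, so $d_M(\zero,S)=d/2$ and $d(S)\le d/2$. This works for every $q$.

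If $d=2k+1$ is odd and $q\ge c$, use the same disjoint blocks and add one common extra coordinate, on which $\bc^j$ takes the value $j$. For $q\ge c+1$ these values $1,\dots,c$ are distinct nonzero symbols. Each pair of words then differs in $2k$ block coordinates plus the extra one, giving distance $2k+1=d$, and each word has weight $k+1=\lceil d/2\rceil$ from $\zero$.

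The main obstacle is the boundary case $q=c$, where only $c-1$ nonzero symbols are available. The fix I intend is to let one word, say $\bc^c$, take the value $0$ on the extra coordinate. It then needs one more nonzero position to keep its distance $d$ to the others, and it gets this from the spare coordinate. Then $\bc^c$ has weight $k+1$, and its distance to any $\bc^j$ is $2k$ (the two blocks) plus $1$ (the shared coordinate, where $j\neq 0$) plus $1$ (the spare coordinate), which is $2k+2$, one too many.

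To correct this, I would shrink $\bc^c$'s block to $k-1$ coordinates and add the spare coordinate, keeping weight $k$. This gives distance $k+1+(k-1)+1+1=2k+2$; recomputing carefully, the block size must be $k-1$ with the spare included, so that $d(\bc^c,\bc^j)=(k+1)+k=2k+1$ when the symbol on the extra coordinate differs. The precise bookkeeping still has to be checked, and that check should confirm the count $1+(c+1)k$ coordinates. Once it does, $d_M(\zero,S)=\lceil d/2\rceil$, so $d(S)=\lceil d/2\rceil$ by part 1, and any remaining coordinates are padded with zeros.
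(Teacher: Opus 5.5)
Part~1 is correct and matches the paper's triangle-inequality argument. The even case is also correct, and so is the odd case with $q\ge c+1$. Your construction is the paper's, translated so that the central word is $\zero$.

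\textbf{The gap is the odd case with $q=c$.} The obstacle you identify there does not exist, and the repair you build on it is wrong as written.

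\begin{itemize}
\item If $\bc^c$ carries $0$ on the shared coordinate, it still differs there from every $\bc^j$ with $j<c$, because those carry $j\neq 0$. So $d(\bc^c,\bc^j)=2k+1=d$ with no further change, and $d(\zero,\bc^c)=k\le\lceil d/2\rceil$. Your own sentence (``plus $1$ (the shared coordinate, where $j\neq 0$)'') already shows this: before the spare coordinate is added, the distance is exactly $2k+1$.
\item Hence the claim that $\bc^c$ ``needs one more nonzero position to keep its distance $d$'' is false.
\item The count $k+1+(k-1)+1+1=2k+2$ double counts the shared coordinate, and you leave the final configuration explicitly unverified.
\item The repair breaks for $d=1$, i.e.\ $k=0$. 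There is no block of size $k-1$, and adding the spare coordinate gives distance $2\neq 1$.
\item For $k\ge 1$ your final configuration happens to be valid, but only because it is the unmodified construction with one coordinate relabeled.
\end{itemize}

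\textbf{The missing observation} is that the $c$ symbols on the shared coordinate need only be pairwise distinct, not nonzero. Choose any $c$ distinct elements of $\Z_q$, which is possible exactly when $q\ge c$. This is what the paper's choice $c^i_1=i$ relies on; only distinctness in $\Z_q$ matters there.

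With this, the odd case needs no spare coordinate and uses only $ck+1\le 1+(c+1)k$ coordinates, including when $d=1$. Then $d_M(\zero,S)=\lceil d/2\rceil$, and together with part~1 this gives $d(S)=\lceil d/2\rceil$.
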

\begin{proof}
Notice that if $|S|=1$, then $d=0$ and the claim follows. Let $\bc_1,\bc_2\in S$ and $\by\in C(S)$. Due to the triangular inequality, we have $d=d(\bc_1,\bc_2)\leq d(\bc_1,\by)+d(\bc_2,\by)\leq 2d_M(\by,S)=2d(S)$. Therefore, $d(S)\geq d/2$.

 Let us next consider point 2) and construct $S$ so that we attain the lower bound $\lceil d/2\rceil$. Let us first consider the case with even $d$. Let $S=\{\bc^1,\dots,\bc^c\}$ where $c_j^i\in\{0,1\}$ for each $i,j$ and $\supp(\bc^i)=[1,d/2]\cup[i\cdot d/2+1,(i+1)\cdot d/2]$ for each $i\in[1,c]$. Notice that the pairwise distance of all words in $S$ is exactly $d$. Let $\bw\in\Z^n_q$ be such that $w(\bw)=d/2$ and $w_j=1$ for exactly $j\in[1,d/2]$. Now, we have $d_M(\bw,S)=d/2$.

Let us next consider the case with odd $d$ and $q\geq c$. Let $S=\{\bc^1,\dots,\bc^c\}$ where $c_j^i\in\{0,1\}$ for each $i\in [1,c]$ and $j\in[2,n]$, $c_1^i=i$ and $\supp(\bc^i)=[1,\lceil d/2\rceil]\cup[i\cdot \lfloor d/2\rfloor+2,(i+1)\cdot \lfloor d/2\rfloor+1]$ for each $i\in[1,c]$. Note that since $n \geq 1 + (c+1) \lfloor d/2 \rfloor$, we can make the above choices for the supports. Notice that the pairwise distance of all words in $S$ is $2\lfloor d/2\rfloor+1=d$. Let $\bw\in\Z^n_q$ be such that $w(\bw)=\lceil d/2 \rceil - 1$ and  $w_j=1$ for exactly $j\in[2,\lceil d/2\rceil]$. Now, we have $d_M(\bw,S)=d-\lfloor d/2\rfloor=\lceil d/2\rceil$.
\end{proof}

\subsection{Intersection of Three Substitution Balls}\label{sec3balls}
In this section, we consider the cardinality of an intersection of three $q$-ary substitution (Hamming)  balls with $q\geq3$ centered at words $\bc^0,\bc^1,\bc^2$ with pairwise distances $d=d(\bc^0,\bc^1)=d(\bc^0,\bc^2)=d(\bc^1,\bc^2)$ (recall that the case of three balls with $q=2$ is known); in particular, we focus on determining the maximum cardinality of the intersection. We first concentrate on the equidistant case as we later manage to show that it leads to the maximum size intersection in Theorem~\ref{thmMax3IntExactNED}. Note that as we are only interested in the size of the intersections, by applying the symmetries of the Hamming space, we may assume, without loss of generality, that $\bc^0=\zero$, $\bc^1=1^d0^{n-d}$ and $c_j^2\in\{0,1,2\}$ for each $j$. These assumptions can be made by first subtracting $\bc_0$ from each word, then observing that from the point-of-view of distance, the ordering of coordinate positions does not matter and finally by permuting the symbols within each coordinate position. We further denote by $S_d^A=\{\bc^0,\bc^1,\bc^2\}$ a triple of words such that $d(\bc^i,\bc^j)=d$ for each $i\neq j$ and $|\supp(\bc^1)\cap \supp(\bc^2)|=A$.
Recall that in Example~\ref{exSubInt}, we have $A = 1$ in the case of the set $S$, while $A = 2$ in the case of $S'$.
Note that $A\geq d/2$. Indeed, otherwise we would have $d(\bc^1,\bc^2)>d$.

In the following lemma, we show that $d$ and $A$ determine the size of the intersection of three Hamming balls of radius $t$ each at distance $d$ from each other.

\begin{lemma}\label{lem3intSizesBelong}
Given $t\geq d(S_d^A)$, $d<n$ and $q\geq3$, the size of intersection of three $t$-radius Hamming balls in $\Z^n_q$ with centers at  distances $d$ from each other is within the set $$\{|I_t(S_d^A)|\mid A\in[\lceil d/2\rceil, d]\}.$$
\end{lemma}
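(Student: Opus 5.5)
Take three words $\bc^0,\bc^1,\bc^2\in\Z_q^n$ with pairwise distances all equal to $d$. By the normalization described at the start of Subsection~\ref{sec3balls} (translating by $-\bc^0$, permuting coordinates, and permuting symbols within each coordinate), we may assume $\bc^0=\zero$, $\bc^1=1^d0^{n-d}$ and $c^2_j\in\{0,1,2\}$ for all $j$. These operations are isometries of $\Z_q^n$, so they preserve the intersection size. Setting $A=|\supp(\bc^1)\cap\supp(\bc^2)|$, the normalized triple is an instance of $S_d^A$. The plan has two steps: show that $|I_t(\cdot)|$ depends only on $(d,A)$, and show that $A$ always lies in $[\lceil d/2\rceil,d]$.

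For the first step, use Theorem~\ref{Thm_intersection_size}. For fixed $n$, $q$ and $t$, the intersection size is a function of the values $p_{1,1},p_{2,1},p_{2,2},p_{2,3},p_{3,1}$ attached to the partitions of Example~\ref{Ex_notation}. These values satisfy the linear system~\eqref{Eq_linear_system}:
\[
p_{2,2}+p_{2,3}+p_{3,1}=d,\qquad p_{2,1}+p_{2,3}+p_{3,1}=d,\qquad p_{2,1}+p_{2,2}+p_{3,1}=d.
\]
Together with $A=p_{2,3}+p_{3,1}$ (noted after the theorem on free parameters) and $\sum p_{k,i}=n$, this gives five linearly independent equations in five unknowns. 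Solving them yields
\[
p_{2,2}=p_{2,1}=d-A,\qquad p_{3,1}=2A-d,\qquad p_{2,3}=d-A,\qquad p_{1,1}=n-2d+A.
\]
So every $p_{k,i}$ is determined by $n$, $d$ and $A$. Consequently $|I_t(S)|=|I_t(S_d^A)|$, and in particular the quantity $|I_t(S_d^A)|$ is well defined, since it does not depend on which triple with parameters $(d,A)$ is chosen.

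For the second step, note that $A\le w(\bc^1)=d$ trivially. For the lower bound, apply~\eqref{eqDist} to get
\[
d=d(\bc^1,\bc^2)=2d-A-|\vsupp(\bc^1)\cap\vsupp(\bc^2)|\ge 2d-2A,
\]
so $A\ge d/2$, and since $A$ is an integer, $A\ge\lceil d/2\rceil$. Equivalently, nonnegativity of $p_{3,1}=2A-d$ in the solved system gives the same bound.

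The main obstacle is making sure the first step is airtight: the five equations must be verified to be independent, so that all the $p_{k,i}$ are genuinely forced by $(n,d,A)$. The hypotheses $t\ge d(S_d^A)$ and $d<n$ are not needed for this argument. They only ensure that the intersections are nonempty and the configuration is nondegenerate, so they play no role beyond that.
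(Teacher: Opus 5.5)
Your proposal is correct and follows the paper's proof: normalize to $\bc^0=\zero$, $\bc^1=1^d0^{n-d}$, set $A=|\supp(\bc^1)\cap\supp(\bc^2)|$, and obtain $\lceil d/2\rceil\le A\le d$ from \eqref{eqDist} exactly as the paper does. Your first step solves the linear system to get $p_{2,1}=p_{2,2}=p_{2,3}=d-A$, $p_{3,1}=2A-d$ and $p_{1,1}=n-2d+A$; this makes explicit that $|I_t(S_d^A)|$ is well defined, which the paper leaves implicit by relying on its earlier remark that $A=p_{2,3}+p_{3,1}$ is the single extra parameter needed, and your values agree with those the paper uses later.
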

\begin{proof}
Consider a triple of codewords $\bc^0,\bc^1,\bc^2$ at pairwise distance $d$ from each other. As we have discussed at the beginning of this section, when we consider the cardinality of $|B_t(\bc^0)\cap B_t(\bc^1)\cap B_t(\bc^2)|$, we may assume without loss of generality that $\bc^0=\zero$, $w(\bc^1)=w(\bc^2)=d$, $c_i^1=1$ for $i\in[1,d]$ and $c_j^2\in\{0,1,2\}$ for each $j$. Let us denote $|\supp(\bc^1)\cap \supp(\bc^2)|$ by $A$. Consequently, this implies that $|B_t(\bc^0)\cap B_t(\bc^1)\cap B_t(\bc^2)|= |I_t(S_d^A)|$. Hence, we only need to show that $A\in[\lceil d/2\rceil,d].$ 

We have $A\leq d$ since $w(\bc^1)=d$. Thus, $d=d(\bc^1,\bc^2)\geq w(\bc^1)+w(\bc^2)-2A=2d-2A$ which implies that $A\geq d/2$ and since $A$ is an integer, we have $A\geq \lceil d/2\rceil$ as claimed.
\end{proof}

Since $\bc^0=\zero$, we have $w(\bc^1)=w(\bc^2)=d$, $|\supp(\bc^1)\cap \supp(\bc^2)|=A$ and $|\supp(\bc^1)\setminus\supp(\bc^2)|=|\supp(\bc^2)\setminus\supp(\bc^1)|=d-A\leq\lfloor d/2\rfloor$. Furthermore, let us denote by $A_1$ the number of coordinates in $\supp(\bc^1)\cap \supp(\bc^2)$ in which the symbols of $\bc^1$ and $\bc^2$ agree and by $A_2$ the number of coordinates in $\supp(\bc^1)\cap \supp(\bc^2)$ in which the symbols of $\bc^1$ and $\bc^2$ disagree. Thus, $A_1+A_2=A$. Notice that since $d=d(\bc^1,\bc^2)$, we have $d=2(d-A)+A_2$. Hence, $A_2=2A-d$ and $A_1=A-A_2=d-A$.

In the following lemma, we give exact value for the radius of the set $S_d^A$, where the pairwise distances of its words are equal to $d$.

\begin{lemma}\label{lem3centDist}
Given  $d\leq n$, $A\in[\lceil d/2\rceil,d]$ and $q\geq3$, we have $$d(S_d^A)=\left\lceil\frac{d+A}{3}\right\rceil.$$
\end{lemma}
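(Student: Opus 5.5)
We need to show that $\min_{\bv}\max_m d(\bv,\bc^m)=\lceil (d+A)/3\rceil$. We use the normalized form of the triple: $\bc^0=\zero$, and $\bc^1$, $\bc^2$ have weight $d$. The coordinate positions split into five groups:
\begin{itemize}
\item $X$: coordinates in $\supp(\bc^1)\setminus\supp(\bc^2)$, with $|X|=d-A$;
\item $Y$: coordinates in $\supp(\bc^2)\setminus\supp(\bc^1)$, with $|Y|=d-A$;
\item $Z_1$: coordinates in $\supp(\bc^1)\cap\supp(\bc^2)$ where the symbols agree, with $|Z_1|=A_1=d-A$;
\item $Z_2$: coordinates in $\supp(\bc^1)\cap\supp(\bc^2)$ where the symbols disagree, with $|Z_2|=A_2=2A-d$;
\item the remaining coordinates, where all three words are $0$.
\end{itemize}
In terms of the notation of Example~\ref{Ex_notation}, these groups are $\Pa_{2,2}$, $\Pa_{2,1}$, $\Pa_{2,3}$, $\Pa_{3,1}$ and $\Pa_{1,1}$, respectively.

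\textbf{Lower bound.} Fix any $\bv$ and count, in each group, how many of the three distances $d(\bv,\bc^0),d(\bv,\bc^1),d(\bv,\bc^2)$ a single coordinate contributes to.
\begin{itemize}
\item In $X$, $Y$ and $Z_1$ the three centers take only two distinct values, so every coordinate contributes at least $1$ to the sum of the three distances.
\item In $Z_2$ the three centers take three distinct values, so every coordinate contributes at least $2$.
\item The remaining coordinates contribute at least $0$.
\end{itemize}
Summing, $d(\bv,\bc^0)+d(\bv,\bc^1)+d(\bv,\bc^2)\ge 3(d-A)+2(2A-d)=d+A$. Hence $3\,d_M(\bv,S)\ge d+A$, and since $d_M(\bv,S)$ is an integer, $d_M(\bv,S)\ge\lceil (d+A)/3\rceil$.

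\textbf{Upper bound.} We construct $\bv$, zero outside $X\cup Y\cup Z_1\cup Z_2$. In every coordinate of these groups we give $\bv$ a symbol equal to one of the centers' symbols there, so equality holds in the lower-bound count. Each coordinate then distributes its unavoidable cost as follows.
\begin{itemize}
\item In $X$: choosing $0$ costs $\bc^1$ one unit; choosing $1$ costs $\bc^0$ and $\bc^2$ one unit each. Choosing $1$ would overshoot the per-coordinate sum, so we only use the symbol $0$ or $\bc^1$'s symbol, and the unit of cost goes to exactly one center. The same holds in $Y$ and $Z_1$: each coordinate charges exactly one center a unit, and we choose which center from the two options that are consistent with equality in the count.
\item In $Z_2$: choosing one of the three symbols charges the other two centers.
\end{itemize}
We then balance the loads. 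Let $x$ be the number of $Z_2$ coordinates where $\bv$ takes $\bc^0$'s symbol $0$, $y$ the number where it takes $\bc^1$'s symbol, and $z$ the number where it takes $\bc^2$'s symbol. We also choose how many coordinates of $X$, $Y$, $Z_1$ follow each option. Each resulting distance is a linear expression in these counts, and the three distances sum to $d+A$. It suffices to choose the counts so that all three distances lie in $\{\lfloor (d+A)/3\rfloor,\lceil (d+A)/3\rceil\}$. We expect this to work by starting from a simple base choice, such as $\bv=\zero$ on $Z_1\cup Z_2$ and $\bv$ matching $\bc^1$ on $X$ and $\bc^2$ on $Y$, and then transferring one unit of load at a time.

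\textbf{Main obstacle.} The hard part is verifying that the available transfers always suffice, especially at the boundary cases $A=d$ (so that $X=Y=Z_1=\emptyset$) and $A=\lceil d/2\rceil$ (where $Z_2$ may be empty or a single coordinate). For $A=d$, $Z_2$ has $d$ coordinates and we can split them into thirds, giving maximum distance $\lceil 2d/3\rceil=\lceil (d+A)/3\rceil$. For general $A$, we would write down explicit counts, for instance taking from $Z_2$ roughly a third toward each symbol adjusted by residues mod $3$, and check the three inequalities case by case on $(d+A)\bmod 3$. The loads range over everything between the extremes, in unit steps, because of the freedom in $Z_1$ and in the $X/Y$ split. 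This is where the computation, though routine, must be done carefully.
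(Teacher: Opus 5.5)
Your lower bound is correct, and it takes a genuinely different and much shorter route than the paper's. The coordinatewise count $\sum_m d(\bv,\bc^m)\ge 3(d-A)+2(2A-d)=d+A$ gives $d(S_d^A)\ge\lceil (d+A)/3\rceil$ in one line. The paper instead fixes a central word minimizing $d_0+d_1+d_2$ (and then $d_0$). It runs a chain of exchange arguments forcing $b_2=0$, $d_1=d_2$, $b_1=0$ and $a_1=A_1$. Finally it minimizes the larger of two linear functions of $a_2^1$, with a case split on $(2A-d)\bmod 3$.

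\textbf{The gap is in the upper bound.} You never exhibit the word, and the mechanism you describe rests on a false premise. Your plan imposes equality in the count on every coordinate. Under that constraint there is \emph{no} freedom in $X$, $Y$ or $Z_1$:
\begin{itemize}
\item In $X$, by your own computation, $\bc^1$'s symbol costs $2$, so $0$ is the only admissible symbol.
\item Likewise $0$ is forced on $Y$.
\item On $Z_1$ the common nonzero symbol is forced, since choosing $0$ there costs $2$.
\end{itemize}
Your proposed base word ($\bc^1$'s symbols on $X$, $\bc^2$'s on $Y$, $0$ on $Z_1\cup Z_2$) violates equality on all $3(d-A)$ of these coordinates. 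Its distances are $(2(d-A),d,d)$, summing to $4d-2A$, not $d+A$. So the claim that the loads move ``in unit steps, because of the freedom in $Z_1$ and in the $X/Y$ split'' is false, and the transfer scheme cannot be run as stated.

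The missing observation is that the forced choices are already perfectly balanced. $X$ charges only $\bc^1$, $Y$ only $\bc^2$, and $Z_1$ only $\bc^0$, each exactly $d-A$ times. So with your $x,y,z$ (where $x+y+z=2A-d$),
\[
d(\bv,\bc^0)=(d-A)+y+z,\qquad d(\bv,\bc^1)=(d-A)+x+z,\qquad d(\bv,\bc^2)=(d-A)+x+y .
\]
Choose $x,y,z\in\{\lfloor (2A-d)/3\rfloor,\lceil (2A-d)/3\rceil\}$. The maximum distance is then
\[
(d-A)+(2A-d)-\lfloor (2A-d)/3\rfloor=(d-A)+\lceil 2(2A-d)/3\rceil=\lceil (d+A)/3\rceil ,
\]
with no further case analysis. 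Up to how the rounding is distributed, this is the configuration the paper's proof arrives at: $\bv=0$ on $X\cup Y$, $a_1=A_1$ and $a_2^1=a_2^2$. With this added, your two-sided argument is complete and considerably simpler than the paper's.
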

\begin{proof}
Let $\bw \in C(S_d^A)$ be a central word of $S_d^A$ such that firstly the sum $d_\bw=d(\bw, \bc^0) + d(\bw, \bc^1) + d(\bw, \bc^2)$ is minimum among the words in $C(S_d^A)$ and secondly, among the central words attaining the minimum sum $d_\bw$, the distance $d(\bw, \bc^0)$ is minimal.
Consider the distance between the word $\bw$ and the words $\bc^0,\bc^1$ and $\bc^2$. 
Let us denote $a_1= |\vsupp(\bw)\cap\vsupp(\bc^1)\cap \vsupp(\bc^2)|$. Note that $a_1\leq A_1$. Denote by $a_2^1$ (resp. $a_2^2$) the number of symbols of $\bw$ which agree only with $\bc^1$ (resp. $\bc^2$) but not with $\bc^2$ (resp. $\bc^1$) in $\supp(\bc^1)\cap \supp(\bc^2)$. Note that $a_2^1+a_2^2\leq A_2$. Furthermore, we denote by $b_1$ (resp. $b_2)$ the number of symbols of $\bw$ which agree with exactly $\bc^1$ (resp. $\bc^2$) in $\supp(\bc^1)\setminus \supp(\bc^2)$ (in $\supp(\bc^2)\setminus \supp(\bc^1)$). By the choice of $\bw$, we may assume  that $w(\bw)=a_1+a_2^1+a_2^2+b_1+b_2$ as no other non-zero symbol can decrease the distances between $\bw$ and the words in $S^A_d$ while they would increase the sum of the distances $d_\bw$. Hence, we may observe  the following  \begin{align}\label{eq1d}
d_0=d(\bw,\bc^0)&=a_1+a_2^1+a_2^2+b_1+b_2;\nonumber\\
d_1=d(\bw,\bc^1)&=d-a_1-a_2^1-b_1+b_2;\\
d_2=d(\bw,\bc^2)&=d-a_1-a_2^2+b_1-b_2.\nonumber
\end{align}
Since $b_1$ and $b_2$ are in symmetrical positions in Equations (\ref{eq1d}), we may assume without loss of generality that $b_1\geq b_2$. As  $\bw$ is a central word, it minimizes the maximum distance to $S^A_d$, that is, minimizes the value $\max\{d_0,d_1,d_2\}$. Hence, we may assume that $b_1=B$ and $b_2=0$. Indeed, if $b_1=B+B'$ and $b_2=B'$, then decreasing both values by $B'$ would not change the values $d_1$ and $d_2$ while it would decrease $d_0$ by Equations (\ref{eq1d}). This is a contradiction to $\bw$ minimizing the sum $d_\bw=d_0+d_1+d_2$. This justifies the assumption $b_2=0$.

Next, we show  $d_1=d_2$. Suppose to the contrary that $d_i> d_j$ for some $\{i,j\}=\{1,2\}$.  If 
$d_2-d_1>0$, then $a_2^1+2b_1-a^2_2>0$. If $d_1>d_2$, then $d_1-d_2>0$ and hence, $a^2_2-a_2^1-2b_1>0$. Consider a word $\bw'$ obtained from $\bw$ by decreasing $a_2^2$ by 1 if $d_1>d_2$ and by decreasing either $a_2^1$ or $b_1$ by 1 if $d_2>d_1$. Note that $d(\bw',\bc^0)= d(\bw,\bc^0)-1$. Moreover, if $d_1>d_2$, then $d_M(\bw',\{\bc^1,\bc^2\})=d_1$ $= d_M(\bw,\{\bc^1,\bc^2\})$ and if $d_2>d_1$, then $d_M(\bw',\{\bc^1,\bc^2\})\leq d_2= d_M(\bw,\{\bc^1,\bc^2\})$. Thus, $\bw'$ is a central word. Hence, the sum of the distances from $\bw'$ to $\bc^0,\bc^1$ and $\bc^2$ is at least  $d_\bw$, that is, $d_\bw\leq d(\bw',\bc^0)+d(\bw',\bc^1)+d(\bw',\bc^2)$. Indeed, we have $d(\bw',\bc^0)+d(\bw',\bc^1)+d(\bw',\bc^2)$ $\leq d_0-1+(d_1+d_2+1)=d_\bw$. However, since $d(\bw',\bc^0)<d(\bw,\bc^0)$, we have a contradiction with the minimality of $d_0$.
Hence, we may assume from now on that $d_1=d_2$. Together with $b_2=0$, this implies that $a_2^2=a_2^1+2b_1$. 

We continue by showing that $b_1=0$. Suppose to the contrary that $b_1=B>0$. Consider a word obtained from $\bw$ by decreasing $b_1$ and $a_2^2$ by $B$ and increasing $a_2^1$ by $B$. Now the distances $d_1$ and $d_2$ remain the same while $d_0$ is decreased by $B$. Again, we obtain a contradiction to the minimality of $d_{\bw}$. 
%
%
Hence, we have $b_1=0$ and thus, $a_2^2=a_2^1$.
%
This implies \begin{equation}\label{eq2d1}
d_1=d-a_1-a_2^1=d_2
\end{equation} and 
\begin{equation}\label{eq2d0}
d_0=a_1+2a_2^1.
\end{equation}


Note that $a_1\leq A_1=d-A\leq \lfloor d/2\rfloor$. Next, we show that $a_1=A_1$. Suppose to the contrary that $a_1=A_1-A'$ for some positive integer $A'$. We have $d_1=d-a_1-a_2^1=A+A'-a^1_2\geq \lceil d/2\rceil +A'-a^1_2$ by Equation (\ref{eq2d1}) and $d_0=A_1-A'+2a_2^1\leq \lfloor d/2\rfloor -A'+2a^1_2$  by Equation (\ref{eq2d0}). We first show that $a_2^1 \geq 1$. For this purpose, suppose to the contrary that $a_2^1 = 0$. In this case, we have $d_0\leq \lfloor d/2\rfloor -A'\leq \lfloor d/2\rfloor -1$ and $d_1=d_2\geq \lceil d/2\rceil +1$. However, now we can obtain word $\bw'$ from $\bw$ by decreasing $A'$ by one. The resulting word $\bw'$ has $d_M(\bw',S^A_d)=d_2-1$, a contradiction to $\bw$ being a central word. Hence, we may assume that $a_2^1\geq1$.

Consider a word $\bw'$ obtained from $\bw$ by decreasing both $A'$ and $a^1_2$ by one. This change decreases the distance to $\bc^2$ while the other two distances remain equal. Thus, $\bw'$ is also a central word but has a smaller sum of distances to the words in $S^A_d$ contradicting the minimality of the sum $d_0+d_1+d_2$. Hence, we have $A'=0$ and $a_1=A_1$.
Thus, we obtain \begin{equation}\label{eq3d1}
d_1=d_2=d-A_1-a_2^1=A-a_2^1
\end{equation} and \begin{equation}\label{eq3d0}
d_0=A_1+2a_2^1=d-A+2a_2^1.
\end{equation} 

We may consider $d_0$ and $d_1$ as linear functions with parameter $a_2^1$, that is, as an increasing and as a decreasing line, respectively. 
If we had $a_2^1\in \R$, 
then the minimum value of the maximums would be obtained when the lines cross each other. However, as $a_2^1$ is an integer, we are interested in comparing the first integer value below and above the intersection point.
Consider  first when $d_0=d_1$. By Equations (\ref{eq3d1}) and (\ref{eq3d0}), this requires that $A_1+2a_2^1=d-A_1-a_2^1$, that is, $3a_2^1=d-2A_1=2A-d$ and $a_2^1=\frac{2A-d}{3}$. 
When $2A-d$ is divisible by 3, we have for $a_2^1=\frac{2A-d}{3}$ that $d_0=d_1=d_2=\frac{d+A}{3}$ by Equations (\ref{eq3d1}) and (\ref{eq3d0}). Since increasing or decreasing $a_2^1$ would increase $d_0$ or $d_1$, the maximum of the three distances would also increase. Therefore, in this case, we obtain $d(S_d^A)=\frac{d+A}{3}$.

Let us next consider the case when $2A-d$ is not divisible by three. In this case, the three distances cannot be equal. 
We consider the cases where $a_2^1\in\{\lfloor \frac{2A-d}{3}\rfloor, \lceil \frac{2A-d}{3}\rceil\}$. Consider first the case $2A-d=3h+1$ for some integer $h$. We have $a_2^1\in\{ \frac{2A-d+2}{3},\frac{2A-d-1}{3}\}$. If $a_2^1= \frac{2A-d+2}{3}$, then $d_0=d-A+2\frac{2A-d+2}{3}=\frac{d+A+4}{3}$ by Equation (\ref{eq3d0}) and
$d_1=d_2=A-\frac{2A-d+2}{3}=\frac{d+A-2}{3}$ by Equation (\ref{eq3d1}). If $a_2^1= \frac{2A-d-1}{3}$, then $d_0=d-A+2\frac{2A-d-1}{3}=\frac{d+A-2}{3}$ by Equation (\ref{eq3d0}) and
$d_1=d_2=A-\frac{2A-d-1}{3}=\frac{d+A+1}{3}$ by Equation (\ref{eq3d1}). Notice that the latter case obtains a smaller maximum value. Therefore, in this case, we obtain $d(S_d^A)=\frac{d+A+1}{3}=\lceil\frac{d+A}{3}\rceil$ when $a_2^1= \frac{2A-d-1}{3}$.

Consider then the case with $2A-d=3h+2$. We have $a_2^1\in\{ \frac{2A-d+1}{3},\frac{2A-d-2}{3}\}$. If $a_2^1= \frac{2A-d+1}{3}$, then $d_0=d-A+2\frac{2A-d+1}{3}=\frac{d+A+2}{3}$ by Equation (\ref{eq3d0}) and
$d_1=d_2=A-\frac{2A-d+1}{3}=\frac{d+A-1}{3}$ by Equation (\ref{eq3d1}). If $a_2^1= \frac{2A-d-2}{3}$, then $d_0=d-A+2\frac{2A-d-2}{3}=\frac{d+A-4}{3}$ by Equation (\ref{eq3d0}) and
$d_1=d_2=A-\frac{2A-d-2}{3}=\frac{d+A+2}{3}$ by Equation (\ref{eq3d1}). Notice that both cases obtain the same maximum value $\frac{d+A+2}{3}$. Hence, we obtain $d(S_d^A)=\frac{d+A+2}{3}=\lceil\frac{d+A}{3}\rceil$. To conclude, we have $d(S_d^A)=\lceil\frac{d+A}{3}\rceil$.
\end{proof}

Due to the previous results, we obtain the following lemma which gives the asymptotic size of the intersection based on the parameter $A$.
\begin{lemma}\label{lem3intSizesAsymp}
Given $t\geq d(S_d^A)$, $d<n$, $A\in[\lceil d/2\rceil,d]$ and $q\geq3$, we have $$|I_t(S_d^A)|\in \Theta(n^{t-\lceil\frac{d+A}{3}\rceil}).$$
\end{lemma}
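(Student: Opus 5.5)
This is a direct combination of two earlier results: Lemma~\ref{lemAsymptoticDist} gives the order $n^{t-d(S)}$, and Lemma~\ref{lem3centDist} identifies $d(S)$ for $S = S_d^A$.

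The first step is to check that $S_d^A$ satisfies the hypotheses of Lemma~\ref{lemAsymptoticDist}. Take $c = 3$, and keep $t$ and $q$ fixed as in that lemma. The set $S_d^A=\{\bc^0,\bc^1,\bc^2\}$ is nonempty, has at most three words, and $t\ge d(S_d^A)$ by assumption. Lemma~\ref{lemAsymptoticDist} then yields $|I_t(S_d^A)|\in\Theta(n^{t-d(S_d^A)})$.

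The second step is to substitute the radius. Since $d<n$ (so in particular $d\le n$), $A\in[\lceil d/2\rceil,d]$ and $q\ge 3$, Lemma~\ref{lem3centDist} applies and gives $d(S_d^A)=\lceil (d+A)/3\rceil$. Plugging this into the exponent gives the claimed $\Theta(n^{t-\lceil\frac{d+A}{3}\rceil})$.

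The only point needing care is that Lemma~\ref{lemAsymptoticDist} is an asymptotic statement in $n$ with the other parameters fixed. So $d$ and $A$ must be treated as constants as well. This is implicit in the statement, and in any case $d\le 2t$ is forced once the intersection is nonempty, as noted in the proof of Lemma~\ref{lemAsymptoticDistCent}.

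I would also record why the bound is genuinely $\Theta$ rather than merely $\mathcal{O}$, since the written proof of Lemma~\ref{lemAsymptoticDist} only argues the upper bound. Because $t\ge d(S_d^A)$, the set $C(S_d^A)$ is nonempty, so $|C(S)|\ge 1$. Hence the main term $|C(S)|(q-1)^{t-d(S)}\binom{n}{t-d(S)}$ in Lemma~\ref{lemAsymptoticDistCent} is $\Omega(n^{t-d(S)})$, while the error term is $\mathcal{O}(n^{t-d(S)-1})$. This gives the lower bound for sufficiently large $n$.
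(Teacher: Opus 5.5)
Your proposal is correct and matches the paper's proof exactly: combine Lemma~\ref{lemAsymptoticDist} (order $n^{t-d(S)}$) with Lemma~\ref{lem3centDist} ($d(S_d^A)=\lceil (d+A)/3\rceil$). Your extra argument for the $\Omega$ direction, via $|C(S)|\ge 1$ in Lemma~\ref{lemAsymptoticDistCent}, validly fills in a step that the paper's proof of Lemma~\ref{lemAsymptoticDist} leaves implicit; note only that $C(S)$ is nonempty simply because the minimum defining $d(S)$ is attained, independently of $t$.
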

\begin{proof}
By Lemma \ref{lemAsymptoticDist} we have $|I_t(S_d^A)|\in \Theta(n^{t-d(S_d^A)})$. Hence, the claim follows from Lemma~\ref{lem3centDist}.
\end{proof}

As we have seen in the previous lemma, given a $q$-ary $e$-error-correcting code with minimum distance $d=2e+2$ and a codeword triple at pairwise equal distances $d$ with $A=\frac{d}{2}$, we obtain $|I_{e+1}(S_d^A)|\in \Theta(n^{e+1-\lceil\frac{3e+3}{3}\rceil})=\Theta(1)$. However, if we forbid triples with $A=\frac{d}{2}$, then we would have $A>\frac{d}{2}$ and $d(S^A_d)>e+1=t$. As we see in the following remark, this gives list-error-correcting codes, i.e., such error-correcting codes that (instead of a single codeword) the decoder gives a list of codewords including the transmitted one (for further detail see~\cite{Guruswamin_kirja}), since $I_{e+1}(S_d^A)=\emptyset$. 
\begin{remark}\label{RemList}
Let $S$ be an $e$-error-correcting code with minimum distance $d=2e+2$ with the following additional property: For each triple $\ba,\bb,\bc$ with $d(\ba,\bb)=d(\ba,\bc)=d(\bb,\bc)=d$ we never have $|\vsupp(\bb-\ba)\cap \vsupp(\bc-\ba)|=\frac{d}{2}$. From the perspective of Lemma \ref{lem3intSizesAsymp}, this implies that we forbid minimal $A=\frac{d}{2}$ (or in other words maximal $A_1$) for codeword triples with pairwise distances exactly $d$. 

Consider a ball of radius $e+1$ containing the codewords $\ba$, $\bb$ and $\bc$. We may assume that the ball is centered around the $\zero$-word. Due to the minimum distance $2e+2$, we have $w(\ba)=w(\bb)=w(\bc)=e+1$ and the supports of these words cannot intersect. However, now $|\vsupp(\bb-\ba)\cap \vsupp(\bc-\ba)|=\frac{d}{2}$, a contradiction. Thus,  $S$ is an $e$-error-correcting code with the  property $|B_{e+1}(\bw)\cap S|\leq 2$ for each $\bw\in \Z^n_q$. In other words, if obtain a word $\bw$ from $\bx\in S$ with at most $e+1$ errors, then we can decode $\bw$ into a list of length $2$ containing $\bx$.
%
\end{remark}

\subsubsection{Configurations maximizing the intersection of three balls}\label{SubsubSecMaxInt}

In this subsection, we consider how to obtain the \emph{maximum} size of the intersection of three Hamming balls with pairwise distances \emph{at least} $d$ from each other. 
In Lemma~\ref{lemMax3Int} and Theorem~\ref{thmMax3IntExact}, we continue with the equidistant case studied above and determine for which values of $A$ the asymptotic maximum of $|I_t(S_d^A)|$ is reached. Then, in Theorem~\ref{thmMax3IntExactNED}, we extend the results to the cases, where the pairwise distances of the centers are at least $d$ -- for example, such cases occur if the centers are chosen from a code with minimum distance $d$.

\begin{lemma}\label{lemMax3Int}
Given $q\geq3$, $t\geq d(S_d^A)$ and $d\geq2$, the maximum cardinality of intersection $$|I_t(S_d^A)|\in \Theta\left(n^{t-\lceil d/2\rceil}\right)$$ is asymptotically obtained if and only if $A=d/2$ for an even $d$ or $A\in\{\lceil d/2\rceil,\lceil d/2\rceil+1\}$ for an odd $d$.
\end{lemma}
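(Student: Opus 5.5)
By Lemma~\ref{lem3intSizesAsymp}, for every admissible $A\in[\lceil d/2\rceil,d]$ we have $|I_t(S_d^A)|\in\Theta(n^{t-\lceil (d+A)/3\rceil})$. By Lemma~\ref{lem3intSizesBelong}, every equidistant triple realizes one of these values of $A$. So the asymptotically maximal size is governed by the smallest possible exponent $\lceil (d+A)/3\rceil$ over $A\in[\lceil d/2\rceil,d]$. The plan is to reduce the statement to an elementary analysis of the integer function $A\mapsto\lceil (d+A)/3\rceil$ on this range. Two things must be checked: this function attains the value $\lceil d/2\rceil$ exactly at the claimed $A$, and every other $A$ gives a strictly larger value, hence an intersection of strictly lower order $\mathcal{O}(n^{t-\lceil d/2\rceil-1})=o(n^{t-\lceil d/2\rceil})$. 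The hypothesis $t\ge d(S_d^A)$ is needed for Lemma~\ref{lem3intSizesAsymp} to apply. For $A$ with $d(S_d^A)>t$ the intersection is empty, which is asymptotically smaller still, so those values cause no trouble.

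First I will establish the lower bound. By Lemma~\ref{lemMindS}(1), $d(S)\ge\lceil d/2\rceil$. Directly, $A\ge\lceil d/2\rceil$ gives $\lceil (d+A)/3\rceil\ge\lceil (d+\lceil d/2\rceil)/3\rceil$. I will then compute this minimum in the two parity cases.

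If $d=2k$, then $(d+A)/3$ at $A=k$ equals $k$, so the minimum exponent is $k=\lceil d/2\rceil$. For $A=k+1$ we get $\lceil (3k+1)/3\rceil=k+1$, and $\lceil (d+A)/3\rceil$ is nondecreasing in $A$. Hence only $A=d/2$ attains the maximum order $n^{t-d/2}$.

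If $d=2k+1$, then $A\ge k+1$. At $A=k+1$ we get $\lceil (3k+2)/3\rceil=k+1=\lceil d/2\rceil$. At $A=k+2$ we get $\lceil (3k+3)/3\rceil=k+1$. At $A=k+3$ we get $\lceil(3k+4)/3\rceil=k+2$, and by monotonicity all larger $A$ give at least $k+2$. Hence exactly $A\in\{\lceil d/2\rceil,\lceil d/2\rceil+1\}$ attain the order $n^{t-\lceil d/2\rceil}$. The assumption $d\ge2$ ensures $k+2\le d$, so the case $A=\lceil d/2\rceil+1$ genuinely lies in the allowed range; $d=3$ gives $A\in\{2,3\}$.

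Finally, I will phrase the ``if and only if'' via $\Theta$. For the claimed values of $A$, $|I_t(S_d^A)|\in\Theta(n^{t-\lceil d/2\rceil})$. For the other values, $|I_t(S_d^A)|\in\mathcal{O}(n^{t-\lceil d/2\rceil-1})$, which is asymptotically strictly smaller, so these cannot achieve the maximum for large $n$. I expect no real obstacle. The only care needed is interpreting ``asymptotically obtained'' as achieving the top order of growth, and noting that $t\ge\lceil d/2\rceil$ is implicit for the maximizing $A$. If a constant-factor comparison between the two odd-case values were wanted, Lemma~\ref{lemAsymptoticDistCent} with $|C(S)|$ would be needed, but that is deferred to the later theorem.
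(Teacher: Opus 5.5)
Your proposal is correct and follows the paper's own argument. You invoke Lemma~\ref{lem3intSizesAsymp} and use that the exponent $\lceil (d+A)/3\rceil$ is nondecreasing in $A$. You then evaluate it at $A=\lceil d/2\rceil$, $\lceil d/2\rceil+1$ and (for odd $d$) $\lceil d/2\rceil+2$, exactly as the paper does; your remarks on empty intersections and on $k+2\le d$ are harmless extra care.
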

\begin{proof}
By Lemma \ref{lem3intSizesAsymp}, we have $|I_t(S_d^A)|\in \Theta(n^{t-\lceil (d+A)/3\rceil})$ for $A\in [\lceil d/2\rceil,d]$.
Observe that $t - \lceil (d+A) / 3 \rceil$ is a decreasing function for the argument $A$. Consider first an even $d$. We have for $A=d/2$, $t-\lceil (d+A)/3\rceil=t-d/2$ and for $A=d/2+1$, we have $t-\lceil (d+A)/3\rceil=t-\lceil d/2+1/3\rceil=t-d/2-1<t-d/2$. Hence, for an even $d$, we have $| I_t(S_d^A)|\in\Theta(n^{t-\lceil d/2\rceil})$ if and only if $A=d/2$ as claimed. 

Let us next consider odd $d\geq3$ and  $A=(d+1)/2$. We have $t-\lceil (d+(d+1)/2)/3\rceil=t-\lceil d/2+ 1/6\rceil=t-(d+1)/2=t - \lceil d/2 \rceil$. Similarly for $A=(d+3)/2$, we have $t-\lceil (d+(d+3)/2)/3\rceil=t-\lceil d/2 +1/2\rceil=t-(d+1)/2=t - \lceil d/2 \rceil$. Moreover, for $A=(d+5)/2$ and $d\geq5$,  we have $t-\lceil (d+(d+5)/2)/3\rceil=t-\lceil d/2 +1/2+1/3\rceil=t-(d+3)/2=t - \lceil d/2 \rceil - 1$. Hence, the claim follows.
%
\end{proof}

For $\bw\in \Z^n_q$, we denote by $\bw_{|n'}$ the restriction of $\bw$ to $\Z^{n'}_q$, that is, for $\bw'=\bw_{|n'}$ we have the word $\bw'\in\Z^{n'}_q$ and $w_i=w'_i$ for $1\leq i\leq n'\leq n$. In the following theorem, we give the exact conditions for the equidistant center points of three balls to attain the maximum size intersection.

\begin{theorem}\label{thmMax3IntExact}
Given $q\geq3$, $t\geq d(S_d^A)$,  a sufficiently large $n$ and $d\geq2$, the  value $|I_t(S_d^A)|$ obtains its maximum when $$A=\begin{cases}
d/2,&\text{ when } d \text{ is even;}\\
(d+1)/2,&\text{ when } d \text{ is odd and } q>6;\\
(d+1)/2+1,&\text{ when } d \text{ is odd and } q<6.
\end{cases}
$$
\end{theorem}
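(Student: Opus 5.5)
By Lemma~\ref{lemMax3Int}, only the candidates $A=d/2$ (even $d$) or $A\in\{(d+1)/2,(d+3)/2\}$ (odd $d$) attain the maximal order $\Theta(n^{t-\lceil d/2\rceil})$; every other $A$ gives a strictly smaller order. For sufficiently large $n$ all those other $A$ are therefore beaten. The even case is then immediate, since only one candidate remains.

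For odd $d$ we compare the two surviving candidates through the leading terms of Lemma~\ref{lemAsymptoticDistCent}. Both have $d(S_d^A)=\lceil d/2\rceil$ by Lemma~\ref{lem3centDist}, so
\[
|I_t(S_d^A)|=|C(S_d^A)|(q-1)^{t-\lceil d/2\rceil}\binom{n}{t-\lceil d/2\rceil}+\Ordo\bigl(n^{t-\lceil d/2\rceil-1}\bigr).
\]
For large $n$ the comparison reduces to comparing $|C(S_d^A)|$ for $A=(d+1)/2$ and $A=(d+3)/2$. If these two counts are unequal, the larger one wins for sufficiently large $n$.

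The main step is computing $|C(S_d^A)|$ exactly, which by Lemma~\ref{cort=dS} equals $|I_{r}(S_d^A)|$ with $r=\lceil d/2\rceil$. One could evaluate this with Theorem~\ref{Thm_intersection_size}. It is cleaner to count directly, reusing the coordinate classes from the proof of Lemma~\ref{lem3centDist}. With $\bc^0=\zero$, the coordinates split into:
\begin{itemize}
\item $A_1=d-A$ positions where $\bc^1,\bc^2$ agree and are nonzero;
\item $A_2=2A-d$ positions where $\bc^1,\bc^2$ are nonzero and distinct;
\item $d-A$ positions each of $\supp(\bc^1)\setminus\supp(\bc^2)$ and $\supp(\bc^2)\setminus\supp(\bc^1)$;
\item the remaining common-zero positions.
\end{itemize}
A central word $\bw$ at maximal distance $r$ cannot use common-zero positions, since that only increases every distance. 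On each other class we record how many coordinates of $\bw$ agree with each subset of centers, including the option of a fresh symbol matching none of the three (this is where factors $q-2$ or $q-3$ appear). We then enumerate all tuples with $d_0,d_1,d_2\le r$.

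For $A=(d+1)/2$ we have $A_1=(d-1)/2$ and $A_2=1$. For $A=(d+3)/2$ we have $A_1=(d-3)/2$ and $A_2=3$. The one-sided classes have sizes $(d-1)/2$ and $(d-3)/2$ respectively. The slack $3r-(d_0+d_1+d_2)$ is small, equal to $1$ or $2$, so only a few deviations from the ``tight'' configuration of Lemma~\ref{lem3centDist} are possible. Each deviation is counted by a binomial coefficient in $A_1$, $A_2$ or $d-A$, times a power of $(q-2)$ or $(q-3)$ for free symbols.

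I expect this to yield polynomial expressions in $q$ and $d$ whose difference is a positive factor times $(q-6)$. That would explain the threshold $q\neq 6$: for $q>6$ the smaller $A=(d+1)/2$ has more central words, and for $q<6$ the larger $A=(d+3)/2$ wins. The hardest part is this careful enumeration, checking that the $d$-dependence cancels in the difference. I would first verify the counts on $d=3$ and $d=5$ via Theorem~\ref{Thm_intersection_size} to guide and check the general count. For $d=3$, $A=(d+3)/2$ equals $3=d$, which is still within the allowed range.
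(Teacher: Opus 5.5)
Your reduction is the same as the paper's. Lemma~\ref{lemMax3Int} settles the even case and leaves $A\in\{(d+1)/2,(d+3)/2\}$ for odd $d$, and Lemma~\ref{lemAsymptoticDistCent} turns the comparison into $|C(S_d^{(d+1)/2})|$ versus $|C(S_d^{(d+3)/2})|$. But the proposal stops exactly where the theorem's content begins. You never compute these two numbers; you only say you \emph{expect} their difference to be a positive multiple of $q-6$. That expectation is the whole odd-$d$ statement: which candidate wins, and that the threshold is $q=6$ independently of $d$. As written, the odd case is unproved. Your sketch of the enumeration is also inaccurate. The quantity $3r-(d_0+d_1+d_2)$ is at most $1$ for $A=(d+1)/2$ and exactly $0$ for $A=(d+3)/2$, not ``$1$ or $2$''. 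So no binomial coefficients in $d$ survive, and the counts are simply $q$ and $6$.

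Your slack idea does close the gap quickly. For $\bv$ supported on $\supp(\bc^1)\cup\supp(\bc^2)$, write $d_0+d_1+d_2=\sum_i\delta_i$, where $\delta_i$ counts the centers disagreeing with $v_i$. On one-sided and $A_1$ positions, $\delta_i\ge 1$, with equality iff $v_i$ is the majority symbol there. On $A_2$ positions, $\delta_i\ge 2$, with equality iff $v_i$ is one of the three center symbols. Hence $d_0+d_1+d_2\ge 2(d-A)+A_1+2A_2=d+A$.

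For $A=(d+3)/2$, this bound equals $3r$ with $r=(d+1)/2$. So every $\delta_i$ is minimal and $d_0=d_1=d_2=r$. Since $A_1=d-A=r-2$, each of the three center symbols must occur exactly once on the three $A_2$ positions, giving $3!=6$ central words.

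For $A=(d+1)/2$, the slack is $1$. With no excess, $\bv$ is determined outside the single $A_2$ position, and any of the $3$ center symbols works there. With excess $1$, the sum equals $3r$, forcing $d_0=d_1=d_2=r$. Setting a one-sided position to the nonzero center symbol, or an $A_1$ position to $0$, pushes some distance down to at most $r-1$. So the only remaining option is a symbol outside $\{0,1,2\}$ on the $A_2$ position, giving $q-3$ words. Thus $|C|=3+(q-3)=q$, and comparing $q$ with $6$ gives the theorem.

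This route is a bit slicker than the paper's. The paper instead uses a weight argument to show that central words vanish on the one-sided positions and equal $1$ on the $A_1$ positions, and then counts the admissible symbols on the $A_2$ positions directly.
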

\begin{proof}
Assume first that $d$ is even. By Lemma \ref{lem3intSizesAsymp}, the maximum value of $|I_t(S_d^A)|$ is in $\Theta(n^{t-\lceil d/2\rceil})$ given that $n$ is large enough. Furthermore, by Lemma \ref{lemMax3Int}, we have $|I_t(S_d^A)|\in \Theta(n^{t-\lceil d/2\rceil})$ if and only if $A=d/2$. Hence, the claim follows for an even $d$. Furthermore, for an odd $d$, we have $|I_t(S_d^A)|\in \Theta(n^{t-\lceil d/2\rceil})$, if and only if $A\in\{\lceil d/2\rceil,\lceil d/2\rceil+1\}$.

Hence, in the following, we concentrate on comparing $|I_t(S_d^A)|$ for an odd $d$ and values $A\in\{\lceil d/2\rceil,\lceil d/2\rceil+1\}$. Consider first the case $|I_t(S_d^{\lceil d/2\rceil})|$ with $S_d^{\lceil d/2\rceil}=\{\bc^0,\bc^1,\bc^2\}$. Now, we have $A_1 = d - A = \lceil d/2 \rceil - 1$ and $A_2 = 2A - d = 1$. Recall that $\bc^0=\zero$, $c_i^1=1$ exactly for $i\in[1,d]$ and $w(\bc^2)=d$. We may further assume that $c^2_d=2$ and $c^2_i=1$ exactly for $i\in[\lceil d/2\rceil, d-1]\cup [d+1,d+\lfloor d/2\rfloor]$. Otherwise, we will have $c_i^j=0$ for each $j$ and each other $i$.

Consider next a word $\bw_{|d+\lfloor d/2\rfloor}\in \Z^{d+\lfloor d/2\rfloor}_q$ and $\bw\in \Z^n_q$. By Lemma \ref{lemMindS}, we have $d(\bw_{|d+\lfloor d/2\rfloor},S^A_{d_{|d+\lfloor d/2\rfloor}})$ $\geq\lceil d/2\rceil$. Notice that if $d(\bw_{|d+\lfloor d/2\rfloor},S^A_{d_{|d+\lfloor d/2\rfloor}})=\lceil d/2\rceil+d'$ for some integer $d'\geq0$, then we may choose the rest of the symbols in $\bw$  in $[d+\lfloor d/2\rfloor,n]$ in $\Theta(n^{t-\lceil d/2\rceil-d'})$ ways. 
Hence, we will in the following count $|C_{\lceil d/2\rceil}(S^A_{d_{|d+\lfloor d/2\rfloor}})|$. 
Let us assume that $\bv\in C_{\lceil d/2\rceil}(S^A_{d_{|d+\lfloor d/2\rfloor}})=I_{\lceil d/2\rceil}(S^A_{d_{|d+\lfloor d/2\rfloor}})$. 

Let us first show that $v_i=0$ for each $i\in [1,\lfloor d/2\rfloor]\cup[d+1,d+\lfloor d/2\rfloor]$.
 Suppose to the contrary that $v_i\neq 0$ for some $i\in [1,\lfloor d/2\rfloor]\cup[d+1,d+\lfloor d/2\rfloor]$.
Assume that $i\in [1,\lfloor d/2\rfloor]$. The case with $i\in [d+1,d+\lfloor d/2\rfloor]$ is symmetric. Since $\lceil d/2\rceil\geq d(\bv, \bc^2)\geq 1+d-|\supp(\bv)\cap \supp(\bc^2)|$, we have $w(\bv)\geq 1+ |\supp(\bv)\cap \supp(\bc^2)|\geq 1+\lceil d/2\rceil$. However, now $d(\bv,\bc_0)\geq 1+\lceil d/2\rceil$, a contradiction. Hence, $\supp(\bv)\subseteq \supp(\bc^1)\cap\supp(\bc^2)=[\lceil d/2\rceil,d]$. 

As $d(\bv, \bc^j) \leq \lceil d/2 \rceil$ for both $j\in\{1,2\}$, we have $\min\{|\vsupp(\bv)\cap\vsupp(\bc^1)|,|\vsupp(\bv)\cap\vsupp(\bc^2)|\geq \lfloor d/2\rfloor$. Thus, $v_i=1$ for each $i\in [\lceil d/2\rceil, d-1]$. As $w(\bv)\leq \lceil d/2\rceil$, we can have at most one additional non-zero symbol and its only possible location is at coordinate $d$. Note that changing this symbol from $0$ to some other, does not increase the distances  $d(\bv, \bc^j)$ for either $j\in\{1,2\}$. This leads to the possibility that there are exactly $q$ different choices for $\bv$  and \begin{equation}\label{eq|C|d}
|C_{\lceil d/2\rceil}\left(S^A_{d_{|d+\lfloor d/2\rfloor}}\right)|=|C\left(S^{\lceil d/2\rceil}_{d}\right)|=q.
\end{equation}

Consider next the case with an odd $d$ and values $A=\lceil d/2\rceil+1$. Now, we have $A_1 = \lceil d/2 \rceil - 2$ and $A_2 = 3$. Again, assume that $\bc^0=\zero$, $w(\bc^1)=d$, $c_i^1=1$ exactly for $i\in[1,d]$, $w(\bc^2)=d$, $c^2_d=c^2_{d-1}=c^2_{d-2}=2$ and $c^2_i=1$ exactly for $i\in[\lfloor d/2\rfloor,d-3]\cup[d+1,d+\lfloor d/2\rfloor-1]$. As in the previous case, due to the asymptotics, we are interested in finding: How many words are there in $|C_{\lceil d/2\rceil}(S^A_{d_{|d+\lfloor d/2\rfloor-1}})|$? 

Let $\bv\in C_{\lceil d/2\rceil}(S^A_{d_{|d+\lfloor d/2\rfloor-1}})$. 
Let us again show that $v_i=0$ for each  $i\in [1,\lfloor d/2\rfloor-1]\cup[d+1,d+\lfloor d/2\rfloor-1]$.
Suppose to the contrary that $v_i\neq 0$ for $i\in [1,\lfloor d/2\rfloor-1]\cup[d+1,d+\lfloor d/2\rfloor-1]$.
Assume that $i\in [1,\lfloor d/2\rfloor-1]$. The case with $i\in [d+1,d+\lfloor d/2\rfloor-1]$ is symmetric. Since $\lceil d/2\rceil\geq d(\bv, \bc^2)\geq 1+d-|\supp(\bv)\cap \supp(\bc^2)|$, we have $w(\bv)\geq 1+ |\supp(\bv)\cap \supp(\bc^2)|\geq 1+\lceil d/2\rceil$. However, now $d(\bv,\bc_0)\geq 1+\lceil d/2\rceil$, a contradiction. Hence, $\supp(\bv)\subseteq [\lfloor d/2\rfloor,d]$. 

As $d(\bv, \bc^j) \leq \lceil d/2 \rceil$ for both $j\in\{1,2\}$, we have $\min\{|\vsupp(\bv)\cap\vsupp(\bc^1)|,|\vsupp(\bv)\cap\vsupp(\bc^2)|\}\geq \lfloor d/2\rfloor$. Thus, $v_i=1$ for each $i\in [\lfloor d/2\rfloor, d-3]$. 
Hence, since $w(\bv)\leq \lceil d/2\rceil$ and $d(\bv, \bc^j) \leq \lceil d/2 \rceil$ for both $j\in\{1,2\}$, we need $|\supp(\bv)\cap [d-2,d]|=2$. In particular, we need to have $v_i=1$ for exactly one $i\in[d-2,d]$ and the same is true for $v_j=2$ and $v_h=0$ for some $j,h\in[d-2,d]$. This gives exactly six possibilities for $\bv$ and \begin{equation}\label{eq|C|d+1}
\left|C\left(S^{\lceil d/2\rceil +1}_{d}\right)\right|=6.
\end{equation} That is, for $q<6$ there are more choices for $\bv$ with $A=\lceil d/2\rceil +1$ and for $q>6$ there are more choices for $\bv$ with $A=\lceil d/2\rceil$, giving the claim. 
%
%
%
\end{proof}

Note that interestingly, the previous theorem leaves the case with $q=6$ open. In particular, the technique which we use to prove the cases $q<6$ and $q>6$ does not work for $q=6$. 

\medskip

In the following theorem, we show that the triples presented in Theorem~\ref{thmMax3IntExact} are the ones which maximize the size of the intersection also when the distances between the centers are not equal. In its proof, we will utilize the following estimation for the distance between two words $\bx,\by\in \F^n_q$. Since $|\supp(\bx)\cap\supp(\by)|\geq|\vsupp(\bx)\cap\vsupp(\by)|$, by (\ref{eqDist}) we have \begin{equation}\label{eqDistEst}
w(\bx)+w(\by)-2|\supp(\bx)\cap\supp(\by)|\leq d(\bx,\by)\leq w(\bx)+w(\by)-2|\vsupp(\bx)\cap\vsupp(\by)|.
\end{equation}

\begin{theorem}\label{thmMax3IntExactNED}
Given a fixed $q\geq3$, a sufficiently large $n$, $t\geq d(S)$ and a set $S=\{\bc^0,\bc^1,\bc^2\} \subseteq \Z_q^n$ such that $d_1=d(\bc^0,\bc^1)$, $d_2=d(\bc^0,\bc^2)$ and $d_3=d(\bc^1,\bc^2)$ as well as $\min\{d_1,d_2,d_3\}\geq d\geq2$, the value $|I_t(S)|$ obtains its maximum when $d_1=d_2=d_3=d$.
\end{theorem}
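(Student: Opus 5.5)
By Lemma~\ref{lemAsymptoticDistCent}, for sufficiently large $n$ the size $|I_t(S)|$ is governed first by the exponent $t-d(S)$ and then by the leading coefficient $|C(S)|(q-1)^{t-d(S)}$. The plan is therefore to first bound the radius $d(S)$ of an arbitrary triple with $\min\{d_1,d_2,d_3\}\ge d$ from below. We then show that whenever the exponent is as large as in the equidistant case, the number of central words cannot exceed the value obtained in Theorem~\ref{thmMax3IntExact}.

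\textbf{Radius bound.} By Lemma~\ref{lemMindS}(1) and the triangle inequality, $d(S)\ge \lceil d/2\rceil$ for any such $S$. By Lemma~\ref{lemMax3Int} the equidistant optimum attains exactly $d(S)=\lceil d/2\rceil$, so it suffices to consider triples with $d(S)=\lceil d/2\rceil$; all others are asymptotically smaller. Fix a central word, translate it to $\zero$, and write $w_j=w(\bc^j)\le \lceil d/2\rceil$. Using \eqref{eqDistEst} we get $d_{jk}\le w_j+w_k$, hence $w_j+w_k\ge d$ for every pair.
\begin{itemize}
\item If $d$ is even, every $w_j=d/2$, every distance equals $d$, and the supports are disjoint up to coincidence of values. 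In that case $S$ is forced to be equidistant with $A=d/2$ relative to any center, so there is nothing to prove.
\item If $d$ is odd, each $w_j\in\{\lfloor d/2\rfloor,\lceil d/2\rceil\}$ and at most one of them equals $\lfloor d/2\rfloor$. The distances then lie in $\{d,d+1\}$. The non-equidistant configurations are those in which some $d_{jk}=d+1$.
\end{itemize}

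\textbf{Comparing central words.} For each non-equidistant configuration with $d(S)=\lceil d/2\rceil$, we count $|C(S)|$ directly, as in the proof of Theorem~\ref{thmMax3IntExact}, by restricting to the coordinates where the centers differ (Lemma~\ref{cort=dS} and the decomposition in Lemma~\ref{lemAsymptoticDistCent}). Let $\bv\in C(S)$. The argument that forces $\supp(\bv)$ into the common support, using $d(\bv,\bc^j)\le\lceil d/2\rceil$ and $w(\bv)\le\lceil d/2\rceil$, should carry over verbatim. It should show that a distance of $d+1$ removes one degree of freedom: a coordinate where two centers disagree and where $\bv$ could take any of $q$ (or several) values in the equidistant case now has to be pinned. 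We then expect $|C(S)|$ to be at most $\max\{q,6\}$, the value from \eqref{eq|C|d} and \eqref{eq|C|d+1}, and in fact strictly smaller or equal. When the counts tie, equality in the leading term does not yet settle the comparison. In that case we plan to compare the second-order terms, or to give a direct injection from $I_t(S)$ into $I_t(S_d^{A})$ for the optimal $A$.

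\textbf{Main obstacle.} The hardest step is the odd-$d$ case analysis. There we must enumerate the possible configurations, that is, which of $w_j$ equal $\lfloor d/2\rfloor$ and how the value-supports overlap, and bound $|C(S)|$ in each. For each configuration we will first try to exhibit the configuration as obtained from an equidistant one by changing a single coordinate, and show that this change can only shrink the set of central words. We expect this monotonicity to handle most cases without separate counting.
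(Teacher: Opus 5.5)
Your reduction step is correct and matches the paper's. Placing a central word at the origin gives $w(\bc^j)\le\lceil d/2\rceil$ and $w(\bc^j)+w(\bc^k)\ge d$. For even $d$ this forces an equidistant triple. For odd $d$ it puts all distances in $\{d,d+1\}$. That leaves exactly the three non-equidistant configurations $(d_1,d_2,d_3)\in\{(d+1,d+1,d+1),(d,d+1,d+1),(d,d,d+1)\}$, which the paper also treats.

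The gap is the comparison step, which is the real content of the theorem for odd $d$. You never compute or bound $|C(S)|$ for any of these three configurations, and ``should carry over verbatim'' and ``we expect'' are not arguments. The bound you aim for, $|C(S)|\le\max\{q,6\}$ with equality allowed, is also too weak. It leaves exactly the tie you then propose to break by second-order terms or an injection, and neither is worked out. The single-coordinate monotonicity idea is unsupported as well. It cannot even reach $(d+1,d+1,d+1)$ from an equidistant triple, since changing one coordinate of one center leaves the distance between the other two unchanged. And a non-strict ``can only shrink'' would not break ties anyway.

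The missing idea is that the extra unit of distance pins the central words almost completely, giving a bound strictly below $3$. Take $\bc^0=\zero$ with $d_1\le d_2\le d_3$, so a central word $\bv$ has $w(\bv)\le(d+1)/2$ and $d(\bv,\bc^i)\le(d+1)/2$.
\begin{itemize}
\item If $w(\bc^i)=d+1$, then by \eqref{eqDist} we get $|\supp(\bv)\cap\supp(\bc^i)|+|\vsupp(\bv)\cap\vsupp(\bc^i)|\ge w(\bv)+(d+1)/2$. This forces $w(\bv)=(d+1)/2$ and $\vsupp(\bv)\subseteq\vsupp(\bc^i)$.
\item If $w(\bc^i)=d$ and $w(\bv)=(d+1)/2$, then the same identity forces $\supp(\bv)\subseteq\supp(\bc^i)$ with at most one value mismatch.
\end{itemize}
Combine this with $d(\bc^1,\bc^2)$ via \eqref{eqDist} and \eqref{eqDistEst}. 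This pins $\supp(\bv)$ to $\supp(\bc^1)\cap\supp(\bc^2)$ and yields the following counts:
\begin{itemize}
\item $|C(S)|=1$ for $(d+1,d+1,d+1)$;
\item $|C(S)|=1$ for $(d,d+1,d+1)$;
\item $|C(S)|\le 2$ for $(d,d,d+1)$. Here there is either a unique central word of weight $(d-1)/2$, or exactly two of weight $(d+1)/2$, depending on whether $|\vsupp(\bc^1)\cap\vsupp(\bc^2)|$ equals $(d-1)/2$ or $(d-3)/2$.
\end{itemize}
An equidistant triple with $d(S)=\lceil d/2\rceil$ has $|C(S)|\in\{q,6\}$ with $q\ge 3$. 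So the leading coefficient in Lemma~\ref{lemAsymptoticDistCent} is strictly larger for the equidistant optimum, no tie arises, and no second-order analysis is needed.
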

\begin{proof} Let us assume without loss of generality that $\bc^0=\zero$ and $d_3\geq d_2\geq d_1$.
By Lemma~\ref{lemMax3Int}, when $d_1=d_2=d_3=d$, we have $|I_t(S)|\in \mathcal{O}(n^{t-\lceil d/2\rceil})$. Moreover, by Lemma~\ref{lemAsymptoticDist}, we have $|I_t(S)|\in \Theta(n^{t-d(S)})$. 
Hence, if $d(S)\ge \lceil d/2 \rceil+1$, then 
the size $|I_t(S)|$ cannot attain the maximum cardinality. Observe that if $\lceil d_3/2\rceil\geq \lceil d/2\rceil+1$, then $d(S)\ge d(\{\bc^1,\bc^2\}) \geq \lceil d_3/2\rceil\geq \lceil d/2\rceil+1$ by applying Lemma~\ref{lemMindS}(1) to the set $\{\bc^1,\bc^2\}$. Thus, we may assume that $\lceil d_3/2\rceil\leq \lceil d/2\rceil$.
Furthermore, by Lemma~\ref{lemMindS}(1), we have $d(S)\geq \lceil d/2\rceil$. Thus, we may assume for the rest of the proof that $d(S)=\lceil d/2\rceil.$ 
Hence, for an even $d$ and a sufficiently large $n$, the maximum size intersection can be attained only by a set $S$ with $d_1=d_2=d_3$. Similarly, for an odd $d$ and the asymptotic maximum, it is enough to consider the following distances $$d\leq d_1\leq d_2\leq d_3\leq d+1.$$

In the following, we will split the proof into three cases for an odd $d$, first $d_1=d_2=d_3=d+1$, then $d_1+1=d_2=d_3=d+1$ and finally, $d_1+1=d_2+1=d_3=d+1$. At the end of the proof we will compare these cases with the case $d_1=d_2=d_3=d$. Recall that $d(S)=(d+1)/2$.  
In particular, this implies that for any central word $\bv$, we have $w(\bv)\leq (d+1)/2$. In each of the three cases, we will compute the number of central words.


Let us consider the case with $d_1=d_2=d_3=d+1$. Notice that we have $w(\bc^1)=w(\bc^2)=d+1$. In this case, due to the radius of the set, the value support of word $\bv$ intersects the value supports of $\bc^1$ and $\bc^2$ in at least $(d+1)/2$ locations. Hence, $w(\bv)=(d+1)/2$. Thus, we have $|\vsupp(\bc^1)\cap \vsupp(\bv)|=|\vsupp(\bc^2)\cap \vsupp(\bv)|=w(\bv)=(d+1)/2$. Thus, $|\vsupp(\bc^1)\cap \vsupp(\bc^2)|\geq (d+1)/2$. Since by Equation~(\ref{eqDistEst}) we have $d+1=d(\bc^1,\bc^2)\leq w(\bc^1)+w(\bc^2)-2|\vsupp(\bc^1)\cap \vsupp(\bc^2)|=2d+2-2|\vsupp(\bc^1)\cap \vsupp(\bc^2)|$, we have $|\vsupp(\bc^1)\cap \vsupp(\bc^2)|=(d+1)/2$. Hence, $\vsupp(\bc^1)\cap \vsupp(\bv)=\vsupp(\bc^2)\cap \vsupp(\bv)=\vsupp(\bc^1)\cap \vsupp(\bc^2)$ and therefore, we have only one possibility for $\bv$ and $|C(S)|=1$.

Let us next consider the case with $d_1+1=d_2=d_3=d+1$. We have $w(\bc^1)=d$ and $w(\bc^2)=d+1$. Hence, again due to the radius of the set $d(S)$, the value support of word $\bv$ intersects the value support of $\bc^2$ in at least $(d+1)/2$ locations. Thus, $w(\bv)=(d+1)/2$. Similarly, we may observe that $|\vsupp(\bv)\cap\vsupp(\bc^1)|\geq (d-1)/2$. Moreover,  by weight $w(\bv)$ and the lower bound in Equation~(\ref{eqDistEst}), we have $|\supp(\bv)\cap\supp(\bc^1)|= (d+1)/2$. Therefore, $|\vsupp(\bc^1)\cap\vsupp(\bc^2)|\geq (d-1)/2$ and $|\supp(\bc^1)\cap\supp(\bc^2)|\geq (d+1)/2$. Furthermore, since by Equation~(\ref{eqDist}) we have $d+1=d(\bc^1,\bc^2)=2d+1-|\vsupp(\bc^1)\cap \vsupp(\bc^2)|-|\supp(\bc^1)\cap \supp(\bc^2)|$, we have $|\supp(\bc^1)\cap \supp(\bc^2)|=(d+1)/2$ and $|\vsupp(\bc^1)\cap \vsupp(\bc^2)|=(d-1)/2$. Therefore, we have $\supp(\bv)=\supp(\bc^1)\cap\supp(\bc^2)$. Since we have $|\vsupp(\bv)\cap\vsupp(\bc^2)|=w(\bv)$, there is only one choice for $\bv$ and hence, $|C(S)|=1$ also in this case.

Finally, we are left with the case $d_1+1=d_2+1=d_3=d+1$. We have $w(\bc^1)=d$ and $w(\bc^2)=d$. Unlike in the previous cases, we have this time $(d-1)/2\leq w(\bv)\leq (d+1)/2$. Indeed, the value support of $\bv$ has to intersect both $\vsupp(\bc^1)$ and $\vsupp(\bc^2)$ in at least $(d-1)/2$ positions. Furthermore, as in the  cases above, we 
may compute that if $w(\bv)=(d-1)/2$, then 
$$(d-1)/2=w(\bv)\geq |\supp(\bv)\cap \supp(\bc^i)|\geq|\vsupp(\bv)\cap \vsupp(\bc^i)|\geq(d-1)/2$$ 
and thus, 
$|\supp(\bv)\cap \supp(\bc^i)|=|\vsupp(\bv)\cap \vsupp(\bc^i)|=(d-1)/2$. 
In other words, $\vsupp(\bv)\cap \vsupp(\bc^i)= \vsupp(\bv)$ and hence, in this case we have $|\vsupp(\bc^1)\cap \vsupp(\bc^2)|\geq (d-1)/2$.
Suppose that $|\vsupp(\bc^1)\cap \vsupp(\bc^2)|\geq (d+1)/2$. Then by Equation~\eqref{eqDistEst}, we have $d_3=d+1\leq 2d-2|\vsupp(\bc^1)\cap \vsupp(\bc^2)|\leq d-1$, a contradiction. Consequently, there can be at most one central word of weight $(d-1)/2$ and only when $|\vsupp(\bc^1)\cap \vsupp(\bc^2)|=(d-1)/2$.


Let us next count the number of central words $\bv$ of weight $w(\bv)=(d+1)/2$. Using Equation~\eqref{eqDist} for $d(\bv,\bc^i)$, we get  $|\supp(\bv)\cap \supp(\bc^i)|=(d+1)/2$ and 
$|\vsupp(\bv)\cap \vsupp(\bc^i)|\geq(d-1)/2$, for both $i\in\{1,2\}$. Therefore, we have   $|\vsupp(\bc^1)\cap \vsupp(\bc^2)|\geq(d-3)/2$ and $|\supp(\bc^1)\cap \supp(\bc^2)|\geq(d+1)/2$. Moreover, as $d(\bc^1,\bc^2)=d+1$, the equations above are tight due to Equation~\eqref{eqDist}. 
Hence, we have $|\vsupp(\bc^1)\cap \vsupp(\bc^2)|+2=|\supp(\bc^1)\cap \supp(\bc^2)|=(d+1)/2$. Since $w(\bv)= (d+1)/2$ and $|\supp(\bv)\cap \supp(\bc^i)|=(d+1)/2$, we have $\supp(\bv)= \supp(\bc^1)\cap \supp(\bc^2)$.
 We notice that since $|\vsupp(\bc^1)\cap \vsupp(\bc^2)|=(d-1)/2-1$ and $|\vsupp(\bv)\cap \vsupp(\bc^i)|\geq(d-1)/2$, we have $\vsupp(\bc^1)\cap \vsupp(\bc^2)\subseteq \vsupp(\bv)$. Except for the coordinate positions indicated by $\vsupp(\bv) \setminus (\vsupp(\bc^1)\cap \vsupp(\bc^2))$, all the symbols of $\bv$ are uniquely determined. Moreover, $|\vsupp(\bv) \setminus (\vsupp(\bc^1)\cap \vsupp(\bc^2))| = 2$ and within these two coordinates one symbol has to agree with $\bc^1$ and the other symbol with $\bc^2$. Therefore, we have exactly two choices for $\bv$ and $|C(S)|=2$. Therefore, we have $|C(S)|=2$ when  $|\vsupp(\bc^1)\cap \vsupp(\bc^2)|=(d-1)/2-1$, and 
$|C(S)|=1$ when  $|\vsupp(\bc^1)\cap \vsupp(\bc^2)|=(d-1)/2$.


Overall, in each case we have shown that $|C(S)|\leq 2$. However, in Equations~(\ref{eq|C|d}) and~(\ref{eq|C|d+1}) for an odd $d$, we have seen that for $d_1=d_2=d_3$ we have $|C(S)|\in\{6,q\}$ where $q\geq3$. Hence, by Lemma~\ref{lemAsymptoticDistCent}, the maximum intersection is obtained by the case $d_1=d_2=d_3=d$, as claimed.
\end{proof}

\subsubsection{Maximum intersection size \texorpdfstring{$N^q_t(3,d)$}{Nqt(3,d)}} \label{SubsubSecMaxIntWithFormula}

In this section, we focus on determining the exact size of $N^q_t(3,d)$ with the aid of Theorem~\ref{Thm_intersection_size} for sufficiently large $n$. Recall that the case with $q=2$ is known by~\cite{yaakobi2018uncertainty}. Thus, we assume $q\geq3$. Let $S=\{\bc^1,\bc^2,\bc^3\}\subseteq\Z^n_q$ with $\dmin(S)=d$, where elements of $S$ are named as in Theorem~\ref{Thm_intersection_size}. By Theorem~\ref{thmMax3IntExactNED}, in order to determine $N^q_t(3,d)$, we may assume that $d(\bc^1,\bc^2) = d(\bc^1,\bc^3) = d(\bc^2,\bc^3) = d \ (\geq 2)$. Furthermore, we assume without loss of generality that $\bc^1=0^n$. Let us first consider the case with an \emph{even} $d$.  Based on Theorem~\ref{thmMax3IntExact}, for $|I_t(S)|$ to be equal to $N^q_t(3,d)$, it follows that $A = d/2$. Therefore, as $A_2=2A-d=0$, we may further notice that there are no coordinates in which all the words $\bc^1$, $\bc^2$ and $\bc^3$ disagree, i.e., using the notation of Theorem~\ref{Thm_intersection_size} $p_{3,1} = 0$. 
Hence, we may without loss of generality assume that 
$\bc^2=1^d0^{n-d}$ and  $\bc^3=0^{d/2}1^d0^{n-3d/2}$. For the parameters $p_{k,i}$ (other than $p_{3,1}$) of Theorem~\ref{Thm_intersection_size}, it follows that $p_{1,1}=n-\frac{3d}{2}$ and $p_{2,1}=p_{2,2}=p_{2,3}=\frac{d}{2}$. In what follows, we show that for even $d$, $q \geq 3$ and sufficiently large $n$, 
\begin{equation}\label{eqNqt3d} 
    N^q_t(3,d)=\sum_{\mathcal{J}} \left(   \binom{n-\frac{3d}{2}}{j_{1,1}^1}(q-1)^{j_{1,1}^{2}}\prod_{i=1}^{3} \binom{\frac{d}{2}}{j_{2,i}^1,j_{2,i}^2,j_{2,i}^3}(q-2)^{j_{2,i}^{3}} \right)  \text,
\end{equation}
where the summation goes through $\mathcal{J}$ which can be efficiently computed due to Remark~\ref{RmkComplexity} and  consists of all the tuples $j_{k,i}^l$ ($k \in \{1, 2, 3\}$, $i \in \{1, 2, \dots, \cS(3,k)\}$ and $l \in \{1, 2, \dots, \min\{q,k+1\}\}$) satisfying the following conditions: (i) $\sum_{l=1}^{\min\{q,k+1\}} j_{k,i}^l = p_{k,i}$ and (ii) for each $\bc^m \in S$, $\sum_{k=1}^{2} \sum_{i=1}^{\cS(3,k)} \left( p_{k,i} - j_{k,i}^{f_{k,i}(\bc^m)} \right) \leq t$. 
Indeed, Equation~\eqref{eqNqt3d} follows directly from Remark~\ref{remSimplifiedFormula} as follows:
\begin{align*}
    N^q_t(3,d) &=|I_t(S)|=\sum_{\mathcal{J}} \left( \prod_{k=1}^{q-1}\left(\prod_{i=1}^{\mathcal{S}(3,k)}\binom{p_{k,i}}{j_{k,i}^1, j_{k,i}^2, \dots, j_{k,i}^{k+1}}(q-k)^{j_{k,i}^{k+1}} \right)\cdot\prod_{i=1}^{\mathcal{S}(3,q)}\binom{p_{q,i}}{j_{q,i}^1, j_{q,i}^2, \dots, j_{q,i}^{q}} \right)\\
    &=\sum_{\mathcal{J}} \left( \prod_{k=1}^{2}\left(\prod_{i=1}^{\mathcal{S}(3,k)}\binom{p_{k,i}}{j_{k,i}^1, j_{k,i}^2, \dots, j_{k,i}^{k+1}}(q-k)^{j_{k,i}^{k+1}} \right)\cdot\prod_{i=1}^{\mathcal{S}(3,q)}\binom{0}{j_{q,i}^1, j_{q,i}^2, \dots, j_{q,i}^{q}} \right)\\
    &=\sum_{\mathcal{J}} \left(   \binom{n-\frac{3d}{2}}{j_{1,1}^1}(q-1)^{j_{1,1}^{2}}\prod_{i=1}^{3} \binom{\frac{d}{2}}{j_{2,i}^1,j_{2,i}^2,j_{2,i}^3}(q-2)^{j_{2,i}^{3}} \right)
\end{align*}
	    where the sum goes through all the tuples of $\mathcal{J}$. The third equality follows from $p_{3,i}=0$ and $\mathcal{S}(3,k)=0$ for $k>3$. The fourth inequality follows from $\mathcal{S}(3,1)=1$ and $\mathcal{S}(3,2)=3$.

\smallskip

Let us now consider the case $d$ is \emph{odd}. 
Recall that if $q< 6$, the maximum $|I_T(S)|$, that is, $N_t^q(3,d)$ is obtained by Theorem~\ref{thmMax3IntExact} for $A = (d+1)/2+1 = (d+3)/2$ further implying that $A_1 = d-A = (d-3)/2$ and $A_2=2A-d = 3$. Therefore, similarly to the case with even $d$, we obtain that $p_{1,1}=n-\frac{3d-3}{2}$, $p_{2,1}=p_{2,2}=p_{2,3}=\frac{d-3}{2}$, and $p_{3,1}=3$. 
By substituting these values into Theorem~\ref{Thm_intersection_size}, we divide into the following cases depending on $q \in \{3, 4, 5\}$:
\begin{itemize}
    \item If $q=3$, then $N_t^q(3,d)$ (for sufficiently large $n$) is equal to
    \[
    \sum_{\mathcal{J}} \left( \binom{n-\frac{3d-3}{2}}{j_{1,1}^1}2^{j_{1,1}^{2}} \prod_{i=1}^{3} \binom{\frac{d-3}{2}}{j_{2,i}^1,j_{2,i}^2,j_{2,i}^3} \cdot \binom{3}{j_{3,1}^1,j_{3,1}^2,j_{3,1}^3}\right) \text,   
    \]
    where the sum goes through all the tuples of $\mathcal{J}$.

    \item If $4 \leq q \leq 5$, then $N_t^q(3,d)$  (for sufficiently large $n$) is equal to
    \[
    \sum_{\mathcal{J}} \left( \binom{n-\frac{3d-3}{2}}{j_{1,1}^1}(q-1)^{j_{1,1}^{2}} \prod_{i=1}^{3} \binom{\frac{d-3}{2}}{j_{2,i}^1,j_{2,i}^2,j_{2,i}^3}(q-2)^{j_{2,i}^{3}} \cdot \binom{3}{j_{3,1}^1,j_{3,1}^2,j_{3,1}^3,j_{3,1}^4}(q-3)^{j_{3,1}^{4}}\right) \text,   
    \]
    where the sum goes through all the tuples of $\mathcal{J}$.
\end{itemize}

Finally, assume that $q \geq 7$ (and $d$ is odd). Now, based on Theorem~\ref{thmMax3IntExact}, we require that $A = (d+1)/2$. This implies that $A_1 = d-A = (d-1)/2$ and $A_2=2A-d = 1$. Therefore, as in the previous cases, we obtain $p_{1,1}=n-\frac{3d-1}{2}$, $p_{2,1}=\frac{d-1}{2}$, $p_{2,2}=\frac{d-1}{2}$, $p_{2,3}=\frac{d-1}{2}$, and $p_{3,1}=1$.
In this case, we have  (for sufficiently large $n$) by Theorem~\ref{Thm_intersection_size}
$$N_t^q(3,d) = \sum_{\mathcal{J}} \left( \binom{n-\frac{3d-1}{2}}{j_{1,1}^1}(q-1)^{j_{1,1}^{2}}(q-3)^{j_{3,1}^{4}}\prod_{i=1}^{3} \binom{\frac{d-1}{2}}{j_{2,i}^1,j_{2,i}^2,j_{2,i}^3}(q-2)^{j_{2,i}^{3}}\right),$$ where the sum goes through all the tuples of $\mathcal{J}$.

We remark that for an odd $d$ and $q=6$, the value $N_t^6(3,d)$ remains unknown, as the case with an odd $d$ and $q=6$ is not solved in Theorem~\ref{thmMax3IntExact}.


\section{Conclusion}\label{secConclusion}
We studied the cardinality of the intersection of $s\geq2$ substitution balls of varying radii in $q$-ary Hamming spaces. First in Section~\ref{secGeneralFormula}, we introduced an exact formula for computing the size of the intersection with the help of the Stirling numbers of the second kind. Our formula works for any $n$, $q\geq2$, $s\geq2$ and set $T$ of radii. Based on the exact formula, we continued in Section~\ref{subsecNumPara} by discussing about the additional information required for computing the size of the intersection besides the pairwise distances between the centers. In particular, we observed that if $s\geq4$, or $s\geq3$ and $q\geq3$, some additional information is necessarily required.

Continuing in Section~\ref{SecIntersection}, we discussed the asymptotical value of the intersection with the help of the radius $d(S)$ of the set $S$ and the central words $C(S)$. Then, in Section~\ref{sec3balls}, we considered the special case of exactly three Hamming balls. 
In Subsection~\ref{SubsubSecMaxInt}, we characterize the configurations of centers with pairwise distances at least $d$ for which the intersection of the balls is maximal, for all $q\ge 3$ with $q\neq 6$.
In Subsection~\ref{SubsubSecMaxIntWithFormula}, we combined these results with Theorem~\ref{Thm_intersection_size} to obtain exact formulas for the values $N^q_t(3,d)$ when $q\geq 3$, $q\neq 6$ and $n$ is large enough.

For future research, it would be interesting to find a complete solution for the size of the maximum intersection  when $q=6$. We believe that such solution might be possible to obtain by studying the number of the words $\bv\in\F^n_q$ with $d_M(\bv,S)=d(S)+1$ in both cases $A=(d+1)/2$ and $A=(d+1)/2+1$ of Theorem~\ref{thmMax3IntExactNED}.
Another interesting question is whether there exist some universal conditions 
for the center points of the Hamming balls which obtain the maximum size intersection when we consider more than three balls (instead of exactly three). With the help of our Theorem~\ref{Thm_intersection_size}, such description could lead to a general formula for the value $N^q_t(s,d)$ which would be of interested as it is  related to DNA-storage as mentioned in the introduction. Further lines of research include describing the algorithmic complexity of the problem in particular when the number of balls $s$ is not bounded by a constant. While such cases are covered by Theorem~\ref{Thm_intersection_size}, the complexity of any related algorithm is unclear as we have to solve a system of non-constant number of linear inequalities.

\bibliographystyle{IEEEtran}
\bibliography{ITW25}

\end{document}